%% PREAMBOLO %% 
%
\documentclass[a4paper, 11pt]{article}

\usepackage{amsmath,amssymb,amsthm}
\usepackage{dsfont}
\usepackage{graphicx}
\usepackage{subcaption}
\usepackage[british]{babel}
\usepackage[utf8]{inputenc}
\usepackage{xcolor}
\usepackage[colorlinks=true,linkcolor=blue,citecolor=red]{hyperref}
\usepackage{authblk}
\usepackage{blindtext}
\usepackage{epstopdf}
\usepackage{cite}

%\usepackage{refcheck}
%%%%%%%%%%%%%%%%%%%%%
%\usepackage{showkeys}
%%%%%%%%%%%%%%%%%%%%%
\newtheorem{remark}{Remark}

\newtheorem{theorem}{Theorem}
\newtheorem{algorithm}{Algorithm}
\newtheorem{lemma}{Lemma}

%%%%%%%%%%%%%%%%%%%%%
\usepackage{mathtools}

%%%%%%%%%%%%%%%%%

\newcommand{\be}{\begin{equation}}
\newcommand{\ee}{\end{equation}}
\newcommand{\z}{\mathbf{z}}
\newcommand{\nt}{n_{\textrm{TOT}}}
\newcommand{\Sv}{S_{\Delta v}}
\newcommand{\V}{\mathbb{V}}

%%%%%%%%%%%%%%%%%%%%%%%
\setlength{\textwidth}{17cm}
\setlength{\oddsidemargin}{-0.5cm}
\setlength{\evensidemargin}{+0.5cm}
\setlength{\textheight}{22.5cm}
\setlength{\topmargin}{-1cm}    
%%%%%%%%%%%%%%%%%%%%%%%%%%%

\begin{document}
\title{Monte Carlo stochastic Galerkin methods for non-Maxwellian kinetic models of  multiagent systems with uncertainties}

\author[1]{Andrea Medaglia \thanks{andrea.medaglia02@universitadipavia.it}}
\author[2]{Andrea Tosin \thanks{andrea.tosin@polito.it}}
\author[1]{Mattia Zanella \thanks{mattia.zanella@unipv.it}}
\affil[1]{Department of Mathematics "F. Casorati", University of Pavia, Italy}
\affil[2]{Department of Mathematical Sciences "G. L. Lagrange", Politecnico di Torino, Italy}

\date{}

\maketitle

\abstract{
In this paper, we focus on the construction of a hybrid scheme for the approximation of non-Maxwellian kinetic models with uncertainties. In the context of multiagent systems, the introduction of a kernel at the kinetic level is useful to avoid unphysical interactions. The methods here proposed, combine a direct simulation Monte Carlo (DSMC) in the phase space together with stochastic Galerkin (sG) methods in the random space. The developed schemes preserve the main physical properties of the solution together with accuracy in the random space. The consistency of the methods is tested with respect to surrogate Fokker-Planck models that can be obtained in the quasi-invariant regime of parameters. Several applications of the schemes to non-Maxwellian models of multiagent systems are reported. 
}
\\[+.2cm]
{\bf Keywords:} uncertainty quantification; stochastic Galerkin methods; Direct Simulation Monte Carlo methods; nonlinear Fokker-Planck equations; kinetic equations; kinetic modelling

\tableofcontents

\section{Introduction}

Kinetic equations are often studied to describe aggregate trends of large systems of interacting particles and have shown a remarkable effectivity in different research fields, ranging from classical rarefied gas dynamics to socio-economic and traffic flow dynamics. Without reviewing the huge literature on these topics, we mention \cite{CDOP,cercignani1994BOOK,CFRT,DM08,dimarco_acta,pareschi2013BOOK,perthame,prigogine1971BOOK} and the reference therein for an introduction to the subject. The contributions have to be further distinguished depending on the type of kernel characterizing the interaction frequency between particles or agents. It is worth mentioning that the introduction of a state-dependent kernel represents an essential tool in kinetic theory to enforce physical properties of rarefied gases \cite{desvillettes}, whereas it is currently underexplored in less classical applications to multi-agent systems. In this direction, we mention the following recent contributions \cite{during2019,during2021,DPTZ,Toscani_nonMax}. 

The deterministic description of multi-agent phenomena has often to face the lack of essential information on microscopic dynamics, initial states, or boundary conditions. Hence, it is of paramount importance to quantify and control possible deviations from expected trends and patterns due to unavoidable uncertainties in the model parameters and initial distributions. An established idea relies on considering these quantities as random variables influencing the evolution of the kinetic distribution, increasing, therefore, the dimensionality of the problem.  In recent years, we experienced a growing interest in the construction of numerical methods for kinetic equations with uncertainties, see the collection \cite{Jin2017}. Among the most popular techniques for uncertainty quantification, stochastic Galerkin (sG) methods are based on the construction of deterministic solvers and are capable to guarantee spectral convergence in the random field under suitable regularity assumptions \cite{LJ18,hu2019,Zhu2017}. Anyway, their computational cost is generally high due to the  {curse} of dimensionality of kinetic equations and they are highly intrusive with respect to the original formulation of the model. Furthermore, the main physical properties of the solution, like its positivity, entropy dissipation, and hyperbolicity, are lost. Besides sG methods, we find nonintrusive approaches to UQ that do not require significant modifications to the numerical scheme of the deterministic problem and are based on collocation strategies. Therefore, these latter methods are easy to parallelize and do not require any knowledge of the class of probability distributions of random parameters. In this direction, multi-fidelity approaches have been recently developed using control variate techniques, see \cite{bertaglia_etal,dimarco2019,dimarco2020,hu2021,gamba2021,pareschi2022}.

In this work, we follow a different path that is inspired by the novel approach proposed in the seminal work for mean-field equations \cite{Carrillo2019} and further extended to the homogeneous Boltzmann equation in \cite{pareschi2020JCP}. The proposed approach is capable to combine the efficiency of Direct Simulation Monte Carlo (DSMC) methods for nonlinear kinetic equations in the phase space \cite{babovsky1986,babovsky1989,nanbu1980,pareschi2001ESAIMP} with the accuracy of sG methods in the parameter space. The DSMC-sG method preserves the main physical properties of the kinetic solution along with spectral accuracy in the random space provided minimal regularity assumptions. Anyway, in a non-Maxwellian framework, the numerical formulation of the DSMC-sG method requires the introduction of step functions at the particles' level. As shown in \cite{pareschi2020JCP} for the variable hard-spheres (VHS) model, this fact can break spectral convergence of sG. For this reason, it has been shown that a mollification of the step function coupled with a thermalization of particles is capable to restore the physical validity of the model together with spectral accuracy in the random space. 

In models for collective phenomena, the equilibrium distribution of Boltzmann-type models is unknown and typically only mass is conserved. For this reason, we introduce a surrogate Fokker-Planck model that can be formally derived from the original model in the quasi-invariant limit \cite{toscani2006CMS}. In the case of non-Maxwellian interactions, we will obtain a nonlocal nonlinear Fokker-Planck class of equations whose equilibrium distribution can be approximated numerically through suitable deterministic methods \cite{Pareschi2018}. In particular, we will investigate the effects of a mollification of step functions introduced at the Monte Carlo level, coupled with a correction of nonconserved quantities computed by the approximation of the corresponding surrogate Fokker-Planck model. Numerical tests for kinetic models of wealth distribution and traffic flow have been performed. 

The rest of the paper is organized as follows. In Section \ref{sec:1} we introduce non-Maxwellian models for multi-agent systems with random inputs and we formally derive their corresponding Fokker-Planck models. Regularity of the solutions in the random space of the surrogate models has been investigated in Section \ref{subsec:reg}. In Section \ref{sec:nonM_models} we briefly review the basic features of some existing kinetic models for pure gambling, wealth distribution and traffic flow dynamics with non-Maxwellian kernels.   In Section \ref{sec:DSMC} then we construct the DSMC-sG methods and we provide results on the consistency of the method. Finally, in Section \ref{sec:numerics} several numerical results are presented which show the efficiency and accuracy of the introduced method. 

\section{Non-Maxwellian models with uncertain parameters} 
\label{sec:1}
To introduce the modelling setting we consider a binary interaction model with uncertain mixing \cite{Dimarco2017,tosin2018CMS,pareschi2022}. If two sampled particles that are  characterized by the pre-interaction states $v,w\in \mathbb{V} \subseteq \mathbb R$ interact, then their post-interaction states $v^\prime,w^\prime \in \mathbb{V}$ are obtained following the scheme
\begin{equation}
\label{eq:inter}
\begin{split}
v^\prime = v - \epsilon I_1(v,w,\z) + D_1(v)\eta_\epsilon,\\
w^\prime = w - \epsilon I_2(v,w,\z) + D_2(w)\eta_\epsilon,
\end{split}
\end{equation}
with $\epsilon>0$ a given constant, $I_1$, $I_2$ suitable interaction functions depending on the pre-interaction states and on the random quantity $\z \in \mathbb R^{d_\z}$, $d_{\z}\ge 1$.  Furthermore, $\eta_\epsilon$ is a random variable with zero mean and variance $\sigma_\epsilon^2$, and the functions $D_1$ and $D_2$ define the local relevance of the diffusion. Under suitable assumptions on the strength of the diffusion it is possible to show that the post-interaction states $v^\prime$, $w^\prime$ remain in $\mathbb{V}$. 

We adopt a classical kinetic theory approach based on the one-dimensional space-homogeneous Boltzmann equation, that describes the time evolution of the one-body distribution function $f=f(t,v,\z)$. The function $f(t,v,\z)$ identifies the state of the system, such that $f(t,v,\z)dv$ is the fraction of agents characterized by a state comprised between $v$ and $v + dv$ at time $t>0$ and parametrised by uncertainties defined in the random vector $\z$ with joint distribution $p(\z)$. The evolution of $f$ is given by the non-Maxwellian Boltzmann-type model 
\begin{equation}\label{eq:nonmax}
\begin{split}
&\dfrac{d}{dt} \int_{\mathbb V} \varphi(v) f(t,v,\z)dv  \\
&\qquad=\dfrac{1}{2} \left \langle \iint_{\mathbb V\times \mathbb V}B(v,w,\z)(\varphi(v^\prime) + \varphi(w^\prime)-\varphi(v)-\varphi(w))f(t,v,\z)f(t,w,\z)dvdw \right\rangle,
\end{split}
\end{equation}
 {where $\varphi:\mathbb V\rightarrow \mathbb R$ is a test function}, while the symmetric function $B(v,w,\z)$ denotes the collision kernel that  {characterizes} the collision frequency of agents with states $v$ and $w$. The notation $\langle \cdot \rangle$ expresses the expectation with respect to the random variable $\eta_\epsilon$. The model introduced in \eqref{eq:nonmax} can be complemented with uncertain initial condition $f(0,v,\z) = f_0(v,\z)$. 

It is worth noting that in the VHS framework of classical kinetic theory for rarefied gas dynamics the collisional kernel is assumed to be function of the relative velocity $|v-w|$, see \cite{cercignani1994BOOK}. The model \eqref{eq:nonmax} greatly simplifies in the Maxwellian case corresponding to $B(v,w,\z) \equiv 1$. In the description of collective models in the Maxwellian simplification of considering a constant interaction kernel has been largely considered \cite{during2015,pareschi2013BOOK,tosin2019MMS,Visconti2017}. Within the Maxwellian simplification it is possible to argue on the existence and uniqueness of large time behavior of the resulting model. In some cases, explicit solutions can be obtained, as in the famous one-dimensional Kac model \cite{kac59}. In models for collective phenomena, a precise analytical description of the kinetic emerging equilibrium distribution is very difficult to obtain. A possible way to overcome this difficulty relies on the possibility to study surrogate models, that are approximations of the kinetic model \eqref{eq:nonmax} in some limit and whose large time behavior is easily available. 

\subsection{Fokker-Planck approximation}
\label{subsec:FP}

The computation of the emerging equilibrium density of the Boltzmann model introduced in \eqref{eq:nonmax} is very challenging. An established way to overcome this difficulty relies on the introduction of the quasi-invariant limit \cite{cordier2005JSP,toscani2006CMS,tosin2018CMS} under which it is possible to derive a surrogate Fokker-Planck model for the interaction dynamics. The introduced scaling has connections with the grazing collision limit in classical kinetic theory. The main idea is to introduce a new time scale $\tau = \epsilon t$ and to define
\[
f_\epsilon(\tau,v,\z) = f(\tau/\epsilon,v,\z)
\]
that is solution to
\begin{equation}
\label{eq:nonmax_rescale}
\begin{split}
&\dfrac{d}{d\tau} \int_{\mathbb V}\varphi(v)f_\epsilon(\tau,v,\z)dv  \\
&\qquad=\dfrac{1}{2\epsilon} \left\langle \iint_{\mathbb V^2}B(v,w,\z)(\varphi(v^\prime)+\varphi(w^\prime)-\varphi(v)-\varphi(w))f_\epsilon(\tau,v,\z)f_\epsilon(\tau,w,\z)dv\,dw \right\rangle. 
\end{split}
\end{equation}
Hence, scaling the variance of the random variable $\eta_\epsilon$ as $\sigma_\epsilon^2 = \epsilon \sigma^2$ we have that for $\epsilon\ll1$ the interaction dynamics in \eqref{eq:inter} are quasi-invariant, since $v^\prime-v \ll1$ and $w^\prime-w\ll1$. Assuming then $\varphi$ smooth enough and at least $\varphi \in \mathbb C_0^3(\mathbb V)$, we can perform the following Taylor expansions
\[
\begin{split}
\varphi(v^\prime)-\varphi(v)& = (v^\prime-v)\dfrac{d}{dv}\varphi(v)+ \dfrac{1}{2}(v^\prime-v)^2\dfrac{d^2}{dv^2}\varphi(v) + \dfrac{1}{6}(v^\prime-v)^3\dfrac{d^3}{dv^3}\varphi(\bar v) \\
\varphi(w^\prime)-\varphi(w) &= (w^\prime-w)\dfrac{d}{dw}\varphi(w)+ \dfrac{1}{2}(w^\prime-w)^2\dfrac{d^2}{dw^2}\varphi(w) + \dfrac{1}{6}(w^\prime-w)^3\dfrac{d^3}{dw^3}\varphi(\bar w),
\end{split}
\]
with $\bar v\in (\textrm{min}\{v,v^\prime\},\textrm{max}\{v,v^\prime\})$, $\bar w\in (\textrm{min}\{w,w^\prime\},\textrm{max}\{w,w^\prime\})$. Plugging the above expansion in \eqref{eq:nonmax_rescale} we obtain 
\[
\begin{split}
&\dfrac{d}{d\tau} \int_{\mathbb V} \varphi(v)f_\epsilon(\tau,v,\z)dv \\
&= \dfrac{1}{2\epsilon} \left\{ \left\langle \iint_{\mathbb V^2}B(v,w,\z)\left[(v^\prime-v)\dfrac{d}{dv}\varphi(v) + (w^\prime-w)\dfrac{d}{dw}\varphi(w)\right] f_\epsilon(\tau,v,\z)f_\epsilon(\tau,w,\z)dv\,dw\right\rangle \right. \\
&+\dfrac{1}{2\epsilon}\left. \left \langle \iint_{\mathbb V^2}B(v,w,\z)\left[(v^\prime-v)^2 \dfrac{d^2}{dv^2}\varphi(v) + (w^\prime-w)^2 \dfrac{d^2}{dw^2}\varphi(w)\right]f_\epsilon(\tau,v,\z)f_\epsilon(\tau,w,\z)dv\,dw \right\rangle\right\}  \\
& +R_\varphi^\epsilon(f_\epsilon,f_\epsilon),
\end{split}\]
where $R_\varphi^\epsilon(f_\epsilon,f_\epsilon)$ is a reminder term of the following form 
\[
R_\varphi^\epsilon(f_\epsilon,f_\epsilon) = \dfrac{1}{6\epsilon} \left\langle \iint_{\mathbb V^2} B(v,w,\z) \left[(v^\prime-v)^3 \dfrac{d^3}{dv^3}\varphi(v)+(w^\prime-w)^3 \dfrac{d^3}{dw^3}\varphi(w) \right] f_\epsilon(\tau,v,\z)f_\epsilon(\tau,w,\z)dv\,dw\right\rangle.
\]
Thanks to the smoothness assumptions on $\varphi$ and the boundedness of the third order moment of $\eta_\epsilon$,  {in the limit $\epsilon\rightarrow 0$} we have that 
\[
|R_\varphi^\epsilon(f_\epsilon,f_\epsilon)| \rightarrow 0^+
\]
and we may assume that $f_\epsilon$ converges to a distribution $f(\tau,v,\z)$ at least formally, we point the interested reader to \cite{CFRT,toscani2006CMS} for related approaches in the Maxwellian and Boltzmann-Povzner frameworks.  {With a slight abuse of notation, we indicate with $f(\tau,v,\z)$ the limit distribution as $\epsilon\rightarrow 0^+$, hence $f(\tau,v,\z)$ is weak solution to the nonlinear nonlocal Fokker-Planck equation}
\begin{equation}
\label{eq:FP_general}
\begin{split}
&\partial_\tau f(\tau,v,\z) =  \partial_v \left[ \int_{\mathbb V} B(v,w,\z) (I_1(v,w,\z)+I_2(v,w,\z)) f(\tau,w,\z)dw f(\tau,v,\z)\right] \\
&\qquad {+}\dfrac{\sigma^2}{2}\partial_v^2 \left[ \int_{\mathbb V} B(v,w,\z)f(\tau,w,\z)dw (D^2_1(v)+D^2_2(v)) f(\tau,v,\z) \right], 
\end{split}
\end{equation}
complemented with the following boundary conditions for all $\z \in \mathbb R^d$
\begin{equation}
\label{eq:BC}
\begin{split}
& \int_{\V} B(v,w,\z) (I_1(v,w,\z)+I_2(v,w,\z)) f(\tau,w,\z)dw f(\tau,v,\z) \\
&\qquad\qquad+ \dfrac{\sigma^2}{2}\partial_v\left( (D^2_1(v)+D^2_2(v))\int_{\V} B(v,w,\z)f(\tau,w,\z)dw f(\tau,v,\z)\right) \Big|_{v \in \partial V} = 0	\\
&\int_{\V} B(v,w,\z)f(\tau,w,\z)dw (D^2_1(v)+D^2_2(v)) f(\tau,v,\z) \Big|_{v \in \partial V} = 0
\end{split}
\end{equation}

\subsection{Regularity of solutions in the random space}
\label{subsec:reg}
We recall that $p(\z): I_\z \rightarrow \mathbb R_+$ is the probability density of the random  {vector} $\z$. We define the weighted norm in $L^2_p(\V\times I_\z)$ as follows
\[
\|f(t) \|_{L^2_p(\V\times I_\z)} = \left(\int_{\V}\int_{I_\z} |f(t,v,\z)|^2 p(\z)d\z dv \right)^{1/2}.
\]
In the following we will provide sufficient conditions to guarantee regularity of the solution of the general Fokker-Planck model \eqref{eq:FP_general}. 

Let us rewrite first the Fokker-Planck models \eqref{eq:FP_general} as follows
\begin{equation}
\label{eq:FP_test}
\partial_t f(t,v,\z) = \partial_v \left[ \tilde{\mathcal B}[f](t,v,\z)f(t,v,\z) + \mathcal D[f](t,v,\z) \partial_v (f(t,v,\z)) \right],
\end{equation}
with 
\[
\begin{split}
\tilde{\mathcal B}[f](t,v,\z) =& \int_{\V} B(v,w,\z) (I_1(v,w,\z) + I_2(v,w,\z))f(t,w,\z)dw  \\
&+ \dfrac{\sigma^2}{2}(D_1^2(v)+D_2^2(v)) \partial_v \left( \int_{\V} B(v,w,\z)f(t,w,\z)dw\right)
\end{split}\]
and
\[
\mathcal D[f](t,v,\z) = \dfrac{\sigma^2}{2}(D_1^2(v)+D_2^2(v))\int_{\V} B(v,w,\z)f(t,w,\z)dw \ge 0
\]

We have
\begin{theorem}\label{th:1}
Given $B(v,w,\z)>0$, let $f(t,v,\z)$ be the solution of the Fokker-Planck model \eqref{eq:FP_test}. If $C_{\mathcal B} = \|\partial_v \tilde{ \mathcal B}[f] \|_{L^\infty_p(\V \times I_\z)}<+\infty$  {and $\|\partial_v^2 \tilde{ \mathcal B}[f] \|_{L^\infty_p(\V \times I_\z)}<+\infty$} we have
\begin{equation}
\label{eq:d0}
\|f(t) \|^{ {2}}_{L^2_p(\V\times I_\z)} \le e^{C_{\mathcal B} t}\|f(0) \|^{ {2}}_{L^2_p(\V\times I_\z)} 
\end{equation}
for all $t \ge0$, provided 
\[
f^2(t,v,\z)\tilde{\mathcal B}[f](t,v,\z)p(\z)\Big|_{\partial (\V\times I_\z)} = 0,\quad f(t,v,\z)\mathcal D[f](t,v,\z)\partial_v f(t,v,z)p(\z)\Big|_{\partial (\V\times I_\z)} = 0.
\] 
Furthermore, if $C_{\mathcal D} = \| \mathcal D[f]\|_{L^\infty_p(\V \times I_\z)} < +\infty$ we have
\begin{equation}
\label{eq:d1}
\| \partial_v f(t)\|^{ {2}}_{L^2_p(\V\times I_\z)} \le e^{2( {2} C_{\mathcal B} + C_{\mathcal D})t}\| \partial_v f(0)\|^{ {2}}_{L^2_p(\V\times I_\z)}.
\end{equation}
\end{theorem}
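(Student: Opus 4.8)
\emph{Proof proposal.} The plan is to obtain both inequalities as weighted-$L^2_p$ energy estimates that are then closed by Gr\"onwall's lemma. In each case I would differentiate the relevant squared norm in time, insert the equation \eqref{eq:FP_test}, integrate by parts in $v$ --- the boundary contributions on $\partial(\V\times I_\z)$ vanishing exactly by the stated hypotheses on $f^2\tilde{\mathcal B}[f]p$ and $f\mathcal D[f]\partial_v f\,p$ (and their $\partial_v f$-analogues for the second estimate) --- and then dominate the right-hand side by a constant multiple of the norm.

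For \eqref{eq:d0} I would start from
\[
\frac{d}{dt}\|f(t)\|^2_{L^2_p(\V\times I_\z)}
= 2\iint_{\V\times I_\z} f\,\partial_t f\, p\,d\z\,dv
= -2\iint_{\V\times I_\z} \partial_v f\,\bigl(\tilde{\mathcal B}[f]f + \mathcal D[f]\,\partial_v f\bigr)p\,d\z\,dv .
\]
The purely diffusive piece $-2\iint \mathcal D[f](\partial_v f)^2 p\,d\z\,dv$ is non-positive, since $\mathcal D[f]\ge 0$ (here $B>0$ is used), and may be discarded. The drift piece equals $-\iint \tilde{\mathcal B}[f]\,\partial_v(f^2)\,p\,d\z\,dv$ which, after a second integration by parts in $v$, becomes $\iint (\partial_v\tilde{\mathcal B}[f])\,f^2\,p\,d\z\,dv \le C_{\mathcal B}\|f(t)\|^2_{L^2_p(\V\times I_\z)}$. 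Gr\"onwall's lemma then yields \eqref{eq:d0}.

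For \eqref{eq:d1} I would set $g:=\partial_v f$, differentiate \eqref{eq:FP_test} in $v$ to obtain $\partial_t g = \partial_v\bigl[(\partial_v\tilde{\mathcal B}[f])f + \tilde{\mathcal B}[f]g + \partial_v(\mathcal D[f]g)\bigr]$, and run the same scheme on $\|g(t)\|^2_{L^2_p}$. The term coming from $\tilde{\mathcal B}[f]g$ is handled exactly as before and, accounting for the extra factor produced by the product rule, contributes at most $2C_{\mathcal B}\|g\|^2_{L^2_p}$. The term coming from $(\partial_v\tilde{\mathcal B}[f])f$ produces, after redistributing a $v$-derivative, a cross term of the type $\iint (\partial_v^2\tilde{\mathcal B}[f])\,f\,g\,p$: this is exactly where the assumption $\|\partial_v^2\tilde{\mathcal B}[f]\|_{L^\infty_p}<+\infty$ enters, since by Young's inequality it is bounded by a constant times $\|g\|^2_{L^2_p}+\|f\|^2_{L^2_p}$ and the $\|f\|^2_{L^2_p}$ contribution is absorbed using the bound \eqref{eq:d0} just obtained. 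Finally the diffusive group splits into the non-positive dissipation $-2\iint \mathcal D[f](\partial_v g)^2 p$, which is discarded, and residual cross terms estimated via the uniform bound $C_{\mathcal D}=\|\mathcal D[f]\|_{L^\infty_p}$ (and Young's inequality where needed). Collecting all constants gives a differential inequality of the form $\frac{d}{dt}\|g\|^2_{L^2_p}\le 2(2C_{\mathcal B}+C_{\mathcal D})\|g\|^2_{L^2_p}$ (the leftover lower-order terms being controlled through \eqref{eq:d0}), whence \eqref{eq:d1} by Gr\"onwall.

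The main obstacle I anticipate is the first-derivative estimate \eqref{eq:d1}: differentiating the nonlinear, nonlocal Fokker-Planck equation in $v$ generates terms involving $\partial_v^2 f$ and products $f\,\partial_v f$ that are not controlled a priori, so the delicate points are (i) redistributing $v$-derivatives by integration by parts so that no uncontrolled higher-order norm remains, (ii) exploiting the favourable sign of the pure dissipation $-2\iint \mathcal D[f](\partial_v g)^2 p$ together with the regularity of $\mathcal D[f]$ encoded in $C_{\mathcal D}$ to absorb the diffusion-related cross terms, and (iii) feeding back the zeroth-order estimate to close the loop. One should also keep in mind that $\tilde{\mathcal B}[f]$ and $\mathcal D[f]$ are themselves nonlocal functionals of $f$ (integrals against $B$ and $I_1+I_2$ in $w$), so the finiteness of $C_{\mathcal B}$, $C_{\mathcal D}$ and of $\|\partial_v^2\tilde{\mathcal B}[f]\|_{L^\infty_p}$ amounts to a joint regularity requirement on the data $B,I_1,I_2,D_1,D_2$ and on $f$; accordingly the result is an a priori estimate conditional on this regularity.
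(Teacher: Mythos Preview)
Your proposal is correct and follows essentially the same route as the paper: a weighted-$L^2_p$ energy estimate, integration by parts in $v$ (with boundary terms vanishing by hypothesis), discarding the sign-definite diffusion contribution, and closing via Gr\"onwall; for \eqref{eq:d1} the paper likewise differentiates the equation in $v$, expands $\partial_v^2(\tilde{\mathcal B}[f]f)$, and bounds the drift and diffusion pieces by $4C_{\mathcal B}\|\partial_v f\|^2$ and $2C_{\mathcal D}\|\partial_v f\|^2-\|\partial_v^2 f\|^2$ respectively before applying Gr\"onwall. One minor remark: the paper absorbs the cross term $2\int f\,\partial_v^2\tilde{\mathcal B}[f]\,\partial_v f\,p$ directly into the $4C_{\mathcal B}\|\partial_v f\|^2$ drift bound rather than feeding back \eqref{eq:d0}, so the final differential inequality there is purely in $\|\partial_v f\|^2$ --- your parenthetical about ``leftover lower-order terms being controlled through \eqref{eq:d0}'' would, if taken literally, introduce an additional $\|f(0)\|^2$ contribution not present in \eqref{eq:d1}.
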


\begin{proof}
We multiply by $2 f(t,v,\z)p(\z)$ the nonlinear nonlocal Fokker-Planck equation \eqref{eq:FP_test} and we integrate it over $v$ and $\z$:
\[
\begin{split}
\dfrac{d}{dt} \| f\|_{L^2_p(\V\times I_\z)}^2 =&   \int_{I_\z \times V} 2f(t,v,\z)p(\z) \partial_v \left(\tilde{ \mathcal B}[f](t,v,\z)f(t,v,\z) \right) dv\,d\z\\
& + \int_{I_\z \times \V} 2f(t,v,\z)p(\z) \partial_v\left( \mathcal D[f](t,v,\z) \partial_v f(t,v,\z)  \right) dv\,d\z \\
=& I + II.
\end{split}\]
For the integral $I$ we have
\[
\begin{split}
 &\int_{ \V \times I_\z} 2f(t,v,\z)p(\z) \partial_v \left( \tilde{\mathcal B}[f](t,v,\z) f(t,v,\z) \right) dv\,d\z\\
 &= \int_{ \V\times I_\z} 2f^2(t,v,\z)\partial_v \tilde{ \mathcal B}[f](t,v,\z)p(\z) d\z\,dv + \int_{I_\z \times \V} 2f(t,v,\z) \tilde{ \mathcal B}[f](t,v,\z)\partial_v f(t,v,\z) p(\z) dv\,d\z\\
& \le 2C_{\mathcal B}  \| f \|_{L^2_p(\V\times I_\z)} - 2 \int_{ \V\times I_\z}f(t,v,\z)\partial_v\left(f(t,v,\z)\tilde{\mathcal B}[f](t,v,\z) \right)p(\z)dv\,d\z, 
\end{split}
\]
since $f^2(t,v,\z)\tilde{\mathcal B}[f](t,v,\z)p(\z)\Big|_{\partial (\V\times I_\z)} = 0$. Therefore, we get
\[
\int_{\V\times I_\z} f(t,v,\z)p(\z) \partial_v \left( \tilde{\mathcal B}[f](t,v,\z)f(t,v,\z) \right) dv\,d\z \le \dfrac{C_{\mathcal B} }{2} \| f\|^{ {2}}_{L^2_p(\V\times I_\z)}.
\]
For the integral $II$ we have
\[
\begin{split}
&\int_{\V\times I_\z} 2 f(t,v,\z)p(\z) \partial_v \left(\mathcal D[f](t,v,\z) \partial_v f(t,v,\z)\right) dv\,d\z \\
&\quad= -2 \int_{ \V\times I_\z} (\partial_v f(t,v,\z))^2 \mathcal D[f](t,v,\z) {p(\z)}dv\,d\z \le 0,
\end{split}
\]
since $f(t,v,\z)\mathcal D[f](t,v,\z)\partial_v f(t,v,z)p(\z)\Big|_{\partial (\V\times I_\z)} = 0$ and $\mathcal D[f] \ge 0$.  Hence, we have
\[
\dfrac{d}{dt} \| f(t)\|_{L^2_p(\V\times I_\z)}^{ {2}} \le C_{\mathcal B} \| f(t)\|_{L^2_p(\V\times I_\z)}^{ {2}}.
\]
Thanks to Gronwall's Lemma we obtain
\[
\| f(t)\|_{L^2_p(\V\times I_\z)}^{ {2}} \le e^{C_{\mathcal B} t}\| f(0)\|_{L^2_p(\V\times I_\z)}^{ {2}}.
\]

Next, we apply the $v$ derivative to both members of \eqref{eq:FP_test} 
\[
\partial_t \partial_v f(t,v,\z) = \partial_v^2 \left[\tilde{\mathcal B}[f](t,v,\z)f(t,v,\z) + \mathcal D[f](t,v,\z)\partial_v f(t,v,\z) \right]. 
\]
We multiply by $2p(\z)\partial_v f(t,v,\z)$ both members of the latter equation and we integrate over $\V\times I_\z $
\[
\begin{split}
\dfrac{d}{dt} \|\partial_v f(t)\|^2_{L^2_p(\V\times I_\z)}  &= \int_{\V\times I_\z} 2 \partial_v f(t,v,\z) \partial_v  {\Big[} \partial_v (\tilde{\mathcal B}[f](t,v,\z)f(t,v,\z) )  \\
&\quad + \partial_v(\mathcal D[f](t,v,\z) \partial_v f(t,v,\z))  {\Big]} p(\z)dvd\z. \end{split}\]
We have
\[
\begin{split}
&\int_{ \V\times I_\z}2\partial_v f(t,v,\z) \partial_v \left[\partial_v \left(\tilde{\mathcal B}[f](t,v,\z)f(t,v,\z) \right) \right]  {p(\z)d\z} \\
&= \int_{ \V\times I_\z} 2\partial_v f(t,v,\z)[ \tilde{ {2}\partial_v\mathcal B}[f](t,v,\z)\partial_v f(t,v,\z) + \tilde{\mathcal B}[f](t,v,\z)\partial_v^2f(t,v,\z) + f(t,v,\z)\partial_v^2\tilde{\mathcal B}[f](t,v,\z)]   {p(\z)d\z}\\
& \le  {4} C_{\mathcal B}\|\partial_vf(t) \|_{L^2_p(\V\times I_\z)}^2,
\end{split}\]
and
\[
\int_{\V\times I_\z} 2\partial_v f(t,v,\z)\partial_v^2 (\mathcal D[f](t,v,\z)\partial_vf(t,v,\z)) {p(\z)}dv d\z \le 2C_{\mathcal D}\|\partial_vf \|_{L^2_p(\V\times I_\z)}^2 - \|\partial_v^2 f \|^{ {2}}_{L^2_p(\V\times I_\z)}.
\]
Therefore, we have
\[
\dfrac{d}{dt} \|\partial_v f(t) \|^2_{L^2_p(\V\times I_\z)} \le 2( {2} C_{\mathcal B} +C_{\mathcal D}) \|\partial_vf(t)\|_{L^2_p(\V\times I_\z)}^2
\]
and from the Gronwall inequality we get
\[
 \|\partial_v f(t) \|^2_{L^2_p(\V\times I_\z)} \le e^{2( {2} C_{\mathcal B} +C_{\mathcal D})t} \|\partial_v f(0) \|^2_{L^2_p(\V\times I_\z)},
\]
corresponding to \eqref{eq:d1}, from which we conclude the proof.
\end{proof}

\begin{theorem}\label{th:2}
Given $B(v,w,\z) >0$, let $f(t,v,\z)$ the solution of the Fokker-Planck model \eqref{eq:FP_test} and let us consider the constants
$C_{\mathcal B} = \| \partial_v \tilde{\mathcal B}[f](t,v,\z)\|_{L^\infty_p(\V \times I_\z)}< +\infty$, 
$C_{\mathcal B,1} = \| \partial_\z \tilde{\mathcal B}[f](t,v,\z)\|_{L^\infty_p(\V \times I_\z)}< +\infty$ and $C_{\mathcal B,2} = \| \partial_\z\partial_v \tilde{\mathcal B}[f](t,v,\z)\|_{L^\infty_p(\V \times I_\z)}< +\infty$. Then, if $\sigma^2 = 0$ we have
 {
\[
\begin{split}
\| \partial_{\z} f(t) \|^2_{L^2_p(\V\times I_\z)} &\le 
e^{(2C_{\mathcal B} + C_{\mathcal{B},1}+ C_{\mathcal{B},2})t}\|\partial_\z f(0) \|_{L^2_p(\V \times I_\z)}^2  \\
&\quad+ \dfrac{C_{\mathcal{B},2}}{C_{\mathcal{B}} + C_{\mathcal B,1} + C_{\mathcal B,2}} \| f(0) \|^2_{L^2_p(\V \times I_\z)} (e^{(2C_{\mathcal B} + C_{\mathcal B,1} + C_{\mathcal B,2})t}-e^{C_{\mathcal B}t}) \\
&\quad+ \dfrac{C_{\mathcal{B},1}}{ C_{\mathcal B,1} + C_{\mathcal B,2} -2C_{\mathcal{B}}-2C_{\mathcal D}} \| \partial_v f(0) \|^2_{L^2_p(\V \times I_\z)} (e^{(2C_{\mathcal B} + C_{\mathcal B,1} + C_{\mathcal B,2})t}-e^{2(2C_{\mathcal B}+C_{\mathcal{D}})t}),
%e^{\left(C_{{\mathcal B}}+\frac{C_{{\mathcal B},2}}{2}\right)t}\|\partial_\z f(0) \|_{L^2_p(\V\times I_\z)}^2 + \dfrac{2C_{{\mathcal B},2} e^{C_{\mathcal B} t}(e^{C_{\mathcal B}t}-e^{C_{\mathcal B,2}t/2})}{2(2C_{\mathcal B}-C_{\mathcal B,2})}\| \partial_vf(0)\|_{L^2_p(\V\times I_\z)}
\end{split}
\]
where $C_{\mathcal D} = \| \mathcal D[f]\|_{L^\infty_p(\V \times I_\z)} < +\infty$.}
\end{theorem}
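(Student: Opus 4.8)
The strategy is to run the same kind of weighted $L^2_p$ energy estimate as in the proof of Theorem~\ref{th:1}, but now applied to $\partial_\z f$. Since $\sigma^2=0$, the Fokker--Planck equation \eqref{eq:FP_test} reduces to $\partial_t f=\partial_v\bigl(\tilde{\mathcal B}[f](t,v,\z)\,f(t,v,\z)\bigr)$ --- in particular the second-order diffusion term disappears, so that no second $v$-derivative of $f$ will arise in the estimate; for the given solution $f$ we regard $\tilde{\mathcal B}[f](t,v,\z)$ as a known function whose derivatives $\partial_v\tilde{\mathcal B}[f]$, $\partial_\z\tilde{\mathcal B}[f]$ and $\partial_\z\partial_v\tilde{\mathcal B}[f]$ are bounded in $L^\infty_p(\V\times I_\z)$ by $C_{\mathcal B}$, $C_{\mathcal B,1}$ and $C_{\mathcal B,2}$, respectively. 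Differentiating the equation in $\z$ yields
\[
\partial_t\partial_\z f=\partial_v\Bigl[\partial_\z\tilde{\mathcal B}[f]\,f+\tilde{\mathcal B}[f]\,\partial_\z f\Bigr],
\]
and I would then multiply by $2p(\z)\partial_\z f$ and integrate over $\V\times I_\z$, so that the left-hand side becomes $\tfrac{d}{dt}\|\partial_\z f(t)\|^2_{L^2_p(\V\times I_\z)}$.

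The next step is to expand the right-hand side into the four contributions coming from $\partial_v\partial_\z(\tilde{\mathcal B}[f]f)$. The term carrying $\tilde{\mathcal B}[f]\,\partial_v\partial_\z f$, together with the $\partial_v\tilde{\mathcal B}[f]\,(\partial_\z f)^2$ term, is integrated by parts in $v$ exactly as in the treatment of the integral $I$ in Theorem~\ref{th:1} (the boundary contribution being assumed to vanish, by the natural $\partial_\z$-analogue of the boundary conditions stated there), which produces a term controlled by $C_{\mathcal B}\|\partial_\z f\|^2_{L^2_p}$. The two remaining mixed terms, namely the one with $\partial_\z\partial_v\tilde{\mathcal B}[f]\,f\,\partial_\z f$ and the one with $\partial_\z\tilde{\mathcal B}[f]\,\partial_v f\,\partial_\z f$, are bounded by Young's inequality, giving $C_{\mathcal B,2}\bigl(\|f\|^2_{L^2_p}+\|\partial_\z f\|^2_{L^2_p}\bigr)$ and $C_{\mathcal B,1}\bigl(\|\partial_v f\|^2_{L^2_p}+\|\partial_\z f\|^2_{L^2_p}\bigr)$, respectively. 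Collecting everything gives the differential inequality
\[
\frac{d}{dt}\|\partial_\z f(t)\|^2_{L^2_p(\V\times I_\z)}\le (2C_{\mathcal B}+C_{\mathcal B,1}+C_{\mathcal B,2})\,\|\partial_\z f(t)\|^2_{L^2_p(\V\times I_\z)}+C_{\mathcal B,2}\,\|f(t)\|^2_{L^2_p(\V\times I_\z)}+C_{\mathcal B,1}\,\|\partial_v f(t)\|^2_{L^2_p(\V\times I_\z)}.
\]

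To close the estimate I would now insert the bounds already proved in Theorem~\ref{th:1}: $\|f(t)\|^2_{L^2_p}\le e^{C_{\mathcal B}t}\|f(0)\|^2_{L^2_p}$ from \eqref{eq:d0}, and $\|\partial_v f(t)\|^2_{L^2_p}\le e^{2(2C_{\mathcal B}+C_{\mathcal D})t}\|\partial_v f(0)\|^2_{L^2_p}$ from \eqref{eq:d1} (here $C_{\mathcal D}=0$, but it is convenient to keep it symbolic). This turns the inequality into a linear scalar ODE inequality of the form $y'\le a y+\beta_1 e^{c_1t}+\beta_2 e^{c_2t}$, with $y(t)=\|\partial_\z f(t)\|^2_{L^2_p}$, $a=2C_{\mathcal B}+C_{\mathcal B,1}+C_{\mathcal B,2}$, $\beta_2=C_{\mathcal B,2}\|f(0)\|^2_{L^2_p}$, $c_2=C_{\mathcal B}$, $\beta_1=C_{\mathcal B,1}\|\partial_v f(0)\|^2_{L^2_p}$ and $c_1=2(2C_{\mathcal B}+C_{\mathcal D})$. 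Multiplying by the integrating factor $e^{-at}$ and integrating in time (Duhamel's formula) gives $y(t)\le e^{at}y(0)+\tfrac{\beta_2}{a-c_2}\bigl(e^{at}-e^{c_2t}\bigr)+\tfrac{\beta_1}{a-c_1}\bigl(e^{at}-e^{c_1t}\bigr)$, and a direct check that $a-c_2=C_{\mathcal B}+C_{\mathcal B,1}+C_{\mathcal B,2}$ and $a-c_1=C_{\mathcal B,1}+C_{\mathcal B,2}-2C_{\mathcal B}-2C_{\mathcal D}$ recovers exactly the claimed estimate.

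I expect the only real obstacle to be the bookkeeping in the second step: choosing the integrations by parts so that no uncontrolled $\partial_v\partial_\z f$ term or wrong power of $\|\partial_\z f\|_{L^2_p}$ survives, and balancing the Young inequalities so that the coefficient multiplying $\|\partial_\z f\|^2_{L^2_p}$ can be taken to be $2C_{\mathcal B}+C_{\mathcal B,1}+C_{\mathcal B,2}$. One must also state the supplementary boundary conditions --- the $\partial_\z f$-counterparts of those in Theorem~\ref{th:1}, making $(\partial_\z f)^2\,\tilde{\mathcal B}[f]\,p$ and the associated flux vanish on $\partial(\V\times I_\z)$ --- and one implicitly needs the two denominators $C_{\mathcal B}+C_{\mathcal B,1}+C_{\mathcal B,2}$ and $C_{\mathcal B,1}+C_{\mathcal B,2}-2C_{\mathcal B}-2C_{\mathcal D}$ to be nonzero; in the degenerate cases the corresponding term in Duhamel's formula becomes of the type $t\,e^{at}$ instead.
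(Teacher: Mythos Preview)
Your proposal is correct and follows essentially the same route as the paper: differentiate \eqref{eq:FP_test} (with $\sigma^2=0$) in $\z$, test against $2p(\z)\partial_\z f$, split the four terms of $\partial_\z\partial_v(\tilde{\mathcal B}[f]f)$, handle the pair $\tilde{\mathcal B}[f]\,\partial_v\partial_\z f$ and $\partial_v\tilde{\mathcal B}[f]\,(\partial_\z f)^2$ by integration by parts as in integral~$I$ of Theorem~\ref{th:1}, bound the mixed terms by Young's inequality, and then close via Gronwall/Duhamel using the estimates \eqref{eq:d0}--\eqref{eq:d1}. The paper's proof is organized slightly differently (it groups the terms as $\partial_\z[(\partial_v\tilde{\mathcal B}[f])f]$ and $\partial_\z[\tilde{\mathcal B}[f]\partial_v f]$ and invokes the uniform Gronwall lemma rather than writing out Duhamel explicitly), but the substance is identical; your explicit remarks on the needed boundary conditions for $\partial_\z f$ and on the degenerate-denominator case are in fact more careful than what the paper records.
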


\begin{proof}

Let us consider the $\z$ derivative of the Fokker-Planck model \eqref{eq:FP_test} with $\sigma^2 = 0$
\[
\partial_t \partial_{\z} f(t,v,\z) = \partial_{\z}\partial_v \left[ \tilde{\mathcal B}[f](t,v,\z)f(t,v,\z) \right].% + \mathcal D[f](t,v,\z)\partial_v(f(t,v,\z)) \right]. 
\]
We multiply by $2 p(\z) \partial_{\z} f$ and we integrate over $I_\z \times \V$
\[
\begin{split}
&\int_{ \V\times I_\z} 2 p(\z) \partial_{\z} f(t,v,\z) \partial_t (\partial_{\z} f(t,v,\z)) dv\,d\z \\
& \qquad=\int_{ \V\times I_\z} 2p(\z)\partial_{\z} f(t,v,\z) \partial_{\z} \left[\tilde{\mathcal B}[f](t,v,\z)\partial_v f(t,v,\z) + (\partial_v \tilde{\mathcal B}[f](t,v,\z))f(t,v,\z) \right] dv\,d\z
% \\
%& + \int_{I_{\z}} \int_0^1 2p(\z) \partial_{\z} f \partial_v^2 [\mathcal D[f]\partial_{\z} f]dv\,d\z. 
\end{split}\]
Hence, we observe that 
\[
\begin{split}
&\int_{\V\times I_\z}  2p(\z)\partial_{\z} f \partial_{\z}((\partial_v\tilde{\mathcal B}[f](t,v,\z))f(t,v,\z))dv\,d\z   \\
&\quad\le  {2C_{{\mathcal B}} \| \partial_\z f(t) \|_{L^2_p(\V\times I_\z)}^2  + C_{\mathcal{B},2} \left[ \| \partial_{\z} f(t)\|^2_{L^2_p(\V\times I_\z)} + \| f(t)\|^2_{L^2_p(\V\times I_\z)}\right],}  \\
%&\quad =2\mathcal C_{{\mathcal B}} \| \partial_{\z}f(t)\|^2_{L^2_p(\V\times I_\z)}
\end{split}\]
 and
\[
\begin{split}
&\int_{\V\times I_{\z}} 2p(\z)\partial_{\z} f(t,v,\z) \partial_{\z}(\tilde{\mathcal B}[f](t,v,\z)\partial_vf(t,v,\z))dv\,d\z \\
&\qquad\le C_{{\mathcal B},1}\int_{\V\times I_\z }2p(\z)\partial_\z f(t,v,\z)\partial_v  f(t,v,\z) d\z dv  \\
&\qquad\qquad+  {C_{{\mathcal B}}} \int_{\V\times I_\z } 2p(\z)\partial_\z f(t,v,\z)\partial_\z\partial_v f(t,v,\z) d\z dv \\
&\qquad\le  { C_{{\mathcal B},1}} (\|\partial_\z f(t) \|_{L^2_p(\V\times I_\z)}^2 + \| \partial_v f(t)\|_{L^2_p(\V\times I_\z)}^2), 
\end{split}
\]
thanks to the Young's inequality. 
%Furthermore we have
%\[\begin{split}
%&\int_{I_\z} \int_0^1 2p(\z) \partial_\z f \partial_v\partial_\z \left(  \mathcal D[f]\partial_vf \right)dv\,d\z \\
%&\quad=  - \int_{I_\z} \int_0^1 2p(\z) (\partial_v\partial_\z f)(\partial_\z \mathcal D[f]\partial_v f )dv\,d\z - \int_0^1 \int_{I_\z} 2p(\z)( \partial_v \partial_\z f )^2\mathcal D[f] dv\,d\z \\
%&\quad\le -\int_0^1 \int_{I_\z} 2p(\z)\partial_v\partial_\z f \partial_\z\mathcal D[f] \partial_v fdv\,d\z 
%\end{split}\]
%since $ \mathcal D[f] \ge 0$ (NB $\partial_\z \mathcal D[f] \le 0$). 
%If $D(\cdot) = 0$ we can close the chain of inequalities. 
Hence, we obtained
\[
\dfrac{d}{dt} \| \partial_\z f(t)\|_{L^2_p(\V\times I_\z)}^{ {2}} \le  {\left(2C_{{\mathcal B}}+C_{{\mathcal B},1}+C_{{\mathcal B},2} \right)\| \partial_\z f(t)\|_{L^2_p(\V\times I_\z)}^2  +C_{{\mathcal B},2}\| f(t)\|_{L^2_p(\V\times I_\z)}^2 + C_{{\mathcal B},1}\| \partial_v f(t)\|_{L^2_p(\V\times I_\z)}^2}
\]
Thanks to the uniform Gronwall inequality, see \cite{temam88} p.88, we have
  {
\[
\begin{split}
\|\partial_\z f(t)\|^2_{L^2_p(\V\times I_\z)} &\le e^{\left(2C_{{\mathcal B}}+C_{{\mathcal B},1} + C_{{\mathcal B},2} \right)t}\|\partial_\z f(0) \|_{L^2_p(\V\times I_\z)}^2 \\
&\quad+C_{{\mathcal B},2} \int_0^t \| f(s)\|_{L^2_p(\V\times I_\z)}^2 e^{\left(2C_{{\mathcal B}}+C_{{\mathcal B},1}+C_{{\mathcal B},2}\right)(t-s)}ds \\
& \quad +C_{{\mathcal B},1} \int_0^t \| \partial_v f(s)\|_{L^2_p(\V\times I_\z)}^2 e^{\left(2C_{{\mathcal B}}+C_{{\mathcal B},1}+C_{{\mathcal B},2}\right)(t-s)}ds
\end{split}
\]
}
Taking into account Theorem \ref{th:1} we obtain
 {
\[
\begin{split}
\|\partial_\z f(t)\|^2_{L^2_p(\V\times I_\z)} &\le e^{\left(2C_{{\mathcal B}}+C_{{\mathcal B},1}+C_{{\mathcal B},2}\right)t}\|\partial_\z f(0) \|_{L^2_p(\V\times I_\z)}^2 \\
&\quad+C_{{\mathcal B},2}  \int_0^t  \| f(0)\|^2_{L^2_p(\V\times I_\z)} e^{C_{\mathcal{B}}s}e^{\left(2C_{{\mathcal B}}+C_{{\mathcal B},1}+C_{{\mathcal B},2}\right)(t-s)}ds  \\
&\quad + C_{\mathcal{B},1} \int_0^t \| \partial_v f(0)\|_{L^2_p(\V \times I_\z)}^2 e^{(C_{\mathcal{D}}+2C_{\mathcal B})s}e^{\left(2C_{{\mathcal B}}+C_{{\mathcal B},1}+C_{{\mathcal B},2}\right)(t-s)}ds, 
 \end{split}\]
 }
 from which we conclude.
\end{proof}

\begin{remark}
Theorem \ref{th:1} implies that, provided $f$, $\partial_v f$ are in $L^2_p(\V\times I_\z)$ initially, then under suitable assumptions $f$, $\partial_v f$  remain in $L^2_p(\V\times I_\z)$ for later times. Furthermore, in the hypotheses of Theorem \ref{th:2} we have that at least $f \in H^1_p = \{f : \| \partial_\z^\ell f\|_{L^2_p(\V\times I_\z)}<+\infty, \ell = 0,1\}$ exploiting the regularity of $f$, $\partial_v f$. Anyway, the estimates are not sharp as the ones obtained for linear equations, see e.g. \cite{JLM,QL}. Future research efforts will be dedicated to obtain sharper estimates for nonlinear Fokker-Planck equations. 
\end{remark}

\section{Examples in non-Maxwellian models for collective phenomena} \label{sec:nonM_models}
We  {briefly} present three non-Maxwellian kinetic models for collective phenomena, namely a model for pure gamble \cite{Bassetti2010}, a model for wealth distribution \cite{Toscani_nonMax} where the binary scheme is based on the Cordier-Pareschi-Toscani model \cite{cordier2005JSP} and, finally, a variation of the traffic model presented in \cite{tosin2019MMS} that includes a speed-dependent interaction kernel.
  
\subsection{Pure gambling}
\label{subsec:gambling}

In the kinetic models for pure gambling the state space is $\V = \mathbb R_+$. Preliminary Maxwellian models have been introduced in \cite{Bassetti2010} in which the nonlinear Boltzmann-type model \eqref{eq:nonmax} with $B\equiv 1$ has been considered.  In the pure gambling processes \cite{Dragulescu2000}, the entire wealth of two agents is at stake at each interaction and randomly shared between agents.  Therefore, assuming that the game is fair, the binary interactions are of the type \eqref{eq:inter} with $I_1(v,w,\z) = (1-\omega)v - \omega w$, $I_2(v,w,\z) = (\omega-1)v + \omega w$ where $\omega$ is a random variable symmetric with respect to $1/2$ and we considered $\epsilon = 1$. Furthermore, we consider vanishing diffusion functions $D_1(v,\z) = D_2(w,\z) = 0$. 

In an economic framework, an agent with zero wealth cannot gamble. To mimic this situation, in \cite{Toscani_nonMax} it has been proposed to modify the classical kinetic gamble model of \cite{Bassetti2010} through an interaction kernel of the following form
\begin{equation}
\label{eq:B_gamb}
B(v,w,\z) = \kappa (vw)^\delta,
\end{equation}
where the exponent of the kernel is an uncertain quantity, i.e.  $\delta = \delta(\z)$. For the introduced gambling rules and in presence of the interaction kernel \eqref{eq:B_gamb}, the wealth density $f(t,v,\z)$ satisfies a bilinear non-Maxwellian Boltzmann-type equation that in weak form reads
\begin{equation}
\label{eq:nonMax_gamb}
\dfrac{d}{dt} \int_{\mathbb R_+} f(t,v,\z)dv = \kappa \left\langle \int_{\mathbb R_+^2}(vw)^{\delta}(\varphi(v^\prime)-\varphi(v))f(t,v,\z)f(t,w,\z)\,dv\,dw \right\rangle.
\end{equation}
It is worth to observe that, for any $\delta$, equation \eqref{eq:nonMax_gamb} conserved mass and momentum. The mass conservation can be easily observed by taking $\varphi(v)=1$, whereas for momentum conservation we  consider $\varphi(v) = v$ and we get
\[
\begin{split}
\dfrac{d}{dt} \int_{\mathbb R_+}vf(t,v,\z)dv &= \kappa \left\langle \int_{\mathbb R_+^2} \left( \omega v^{\delta+1}w^{\delta} + (\omega-1)v^\delta w^{\delta+1}\right) f(t,v,\z)f(t,w,\z)\,dv\,dw\right\rangle \\
&= \kappa \left[  \left\langle \omega \right\rangle m_{\delta}m_{\delta+1} + \left\langle \omega-1 \right\rangle m_{\delta}m_{\delta+1} \right] =0
\end{split}\]
since $-\omega$ and $\omega-1$ are identically distributed. Assuming now $\omega \sim \mathcal U([0,1])$, it is possible to show that for any $0<\delta<1$ the large time distribution of the model \eqref{eq:nonMax_gamb} incorporated the kernel uncertainties and is a Gamma density of the form 
\[ %\label{eq:equilibrium_gambling}
f^\infty(v,\z) = \dfrac{(1-\delta)^{1-\delta}}{\Gamma(1-\delta)} v^{-\delta} \textrm{exp} \left\{ -(1-\delta)v\right\}.
\]
The uncertain parameter $\delta$ characterizing the interaction kernel has a great influence on the large time behavior of the system. Indeed, it is worth to remark that the variance of $X \sim f^\infty$ reads
\[
\textrm{Var}(X) = \dfrac{1}{1-\delta}, 
\]
and inequalities of the money distribution increase with $\delta$ and blow up in the limit $\delta\rightarrow 1$. 

\subsection{Wealth distribution}
\label{subsec:wealth}

In recent years, several kinetic models for wealth distribution have been proposed. Also in this case, the state space is $\V = \mathbb R_+$. In the following, we concentrate on the modelling setting proposed in \cite{cordier2005JSP} in the case of interaction with a background distribution. In particular, in \cite{Toscani_nonMax} it is assumed that elementary wealth changes of an agent  are determined by interactions \eqref{eq:inter} with $I_1(v,w,\z) =  \lambda (v-w)$, $D_1(v,\z) = v$, $I_2(v,w;\z) = D_2(w,\z) = 0$ and $w \sim \mathcal E$ a background distribution with $w \in \mathbb R_+$. Therefore, the interaction scheme reads
\[
v^\prime = (1-\epsilon\lambda)v + \epsilon\lambda w + \eta_\epsilon v.
\]
The quantity $\lambda \in (0,1]$ determines the saving propensity and it is assumed $\eta_\epsilon \ge -1+\epsilon\lambda$. Furthermore, in an economic framework, the probability of transactions in which one player has no wealth to exchange is very rare. To this end, in \cite{Toscani_nonMax} the authors proposed the  kernel
\begin{equation}
\label{eq:kernel_wealth}
B(v,w,\z) = \kappa (vw)^\delta,
\end{equation}
with $\delta \in (0,1]$ and $\kappa>0$. In the following, we will concentrate on the case $\delta = \delta(\z)$. The resulting non-Maxwellian kinetic model in weak form reads
\begin{equation}
\label{eq:nonMax_we}
\int_{\mathbb R_+} f(t,v,\z)  \varphi(v)dv = \left \langle \int_{\mathbb R_+^2} \kappa (vw)^{\delta(\z)} (\varphi(v^\prime)-\varphi(v))f(t,v,\z)\mathcal E(w)\,dv\,dw \right\rangle
\end{equation}
and do not conserve mean and energy. In particular, the following estimates hold 
\[
\begin{split}
m_1(t,\z) = \int_{\mathbb R_+} v f(t,v,\z)dv \le \textrm{max} \left\{ m_1(0), \dfrac{M_{1+\delta}}{M_\delta} \right\} \\
m_2(t,\z) = \int_{\mathbb R_+} v^2 f(t,v,\z)dv \le \textrm{max} \left\{ m_2(0), \bar m_2(\z) \right\},
\end{split}\]
where
\[
\bar m_2(\z) = \left( \dfrac{\epsilon\lambda(1-\epsilon\lambda)M_{1+\delta} + \sqrt{(\epsilon\lambda(1-\epsilon\lambda)M_{1+\delta})^2 + M_\delta M_{2+\delta}(2\epsilon\lambda - \sigma^2_\epsilon -\epsilon^2 \lambda^2)}}{M_\delta(2\epsilon\lambda- \sigma_\epsilon^2-\epsilon^2\lambda^2)} \right),
\]
provided $\sigma^2+ \lambda^2 < 2\lambda$. Information on the large time behavior can be obtained by relying to a Fokker-Planck model  approximating the kinetic model \eqref{eq:nonMax_we}
 \be
 \label{eq:FP_wealth}
\partial_\tau f(\tau,v,\z) = \kappa M_\delta \left[ \lambda \partial_v \left( v^\delta (v-M_{1+\delta}/M_{\delta}) f(\tau,v,\z) \right) + \dfrac{\sigma^2}{2} \partial_v^2 (v^{2+\delta}f(\tau,v,\z)) \right]
 \ee
where
\[
M_\delta(\z) = \int_{\mathbb R_+} v^{\delta(\z)} \mathcal E(v)dv < +\infty, \qquad \delta(\z) \le4,
\]
adding  boundary conditions  of the type \eqref{eq:BC}. The equilibrium distribution is now given by
\be \label{eq:equilibrium_wealth}
f^\infty(v,\z) = \dfrac{(\mu(\z) m(\z)^{1+\mu+\delta})}{\Gamma(1+\mu+\delta)} \dfrac{\textrm{exp}\left( -\dfrac{\mu m(\z)}{v} \right)}{v^{2+\mu+\delta}}, 
\ee
where
\[
 \mu(\z) = \dfrac{2\tilde \lambda(\z)}{\tilde \sigma^2(\z)},\qquad \tilde \lambda(\z) = \kappa M_\delta(\z) \lambda, \qquad \tilde \sigma^2(\z) = \kappa M_\delta(\z) \sigma^2,\qquad m(\z) = \dfrac{M_{1+\delta}(\z)}{M_\delta(\z)}.
\]
We point the interested reader to \cite{Toscani_nonMax} for additional details.

\subsection{Traffic flow} \label{subsec:traffic}

%In details, the random variable $z_{1}$ mimics the lack of knowledge in the interaction law between vehicles, whereas the random variable $z_{2}$ influences the frequency of interaction. These variables represent a \textit{structural uncertainty} due to the fact that the physics of the interactions among the vehicles is partly heuristic. For those reasons, it is not restrictive to assume that the vector $\z$ has independent components, i.e. $\Psi(z_{1},\,z_{2})=\Psi_{1}(z_{1})\Psi_{2}(z_{2})$:
%\begin{equation}
%	\textrm{Prob}(\z  \in  A)=\iint_{A}\Psi_{1}(z_{1})\Psi_{2}(z_{2})dz_{2}dz_{1},
%\end{equation}
%with $A \subseteq \mathbb R^2$. 

In kinetic traffic modelling, non-constant interaction kernels have been frequently  considered, see e.g. \cite{choi2021,helbing1996,Herty2005,Herty2010,paveri1975TR,prigogine1971BOOK} and the references therein. In the following, we study the influence of a cross section on a traffic model recently proposed in the Maxwellian framework \cite{tosin2019MMS} in which $\V = [0,1]$. 

The time evolution of the distribution $f(t,v,\z)$  is determined by microscopic binary interactions responsible for speed changes. Given normalized pre-interaction speeds  $(v,\,w) \in [0,1]\times [0,1]$, the post-interaction speeds $(v',\,w')$ are determined by \eqref{eq:inter} with 
\begin{equation}
\label{eq:Inter_traf}
I_1(v,w,\z) = - \big( P(\rho,\z)(1-v)+(1-P(\rho,\z))(P(\rho,\z)w-v) \big) ,\qquad  I_2(v,w,\z) = 0
\end{equation}
 being  $P(\rho,\z)= (1-\rho)^{\mu(\z)} \in [0,1]$, $\mu(\z)>0$, the probability to accelerate with a  traffic density $\rho\in [0,1]$. The presence of uncertain quantities in $P(\rho,\z)$ is associated with different responses of vehicles in heterogeneous traffic conditions, see \cite{tosin2021MCRF}.  Furthermore, we consider $D_1(v) = D(v,\rho)$ and $D_2(v,\z) = 0$. Hence, the speed changes are determined by 
\begin{equation}
	\label{eq:bin_int_traf}
	\begin{split}
		& v'=v+\epsilon I(v,\,w,\z) + D(v,\rho)\eta_\epsilon \\
		& w'=w,
	\end{split}
\end{equation}
we point the interested reader to \cite{tosin2019MMS} for further details on the modeling setting. The choice of the function $D$ has to ensure that $v^\prime,w^\prime \in [0,1]$ for any $v,w \in [0,1]$. In \cite{tosin2019MMS} the authors proposed 
\[
D(v,\rho) = a(\rho) \sqrt{\left((1+\epsilon)v(1-v) - \dfrac{\epsilon}{4}\right)_+}, \qquad a(\rho)\ge 0, 
\]
to guarantee the existence of a constant $c>0$ such that, considering $\eta_\epsilon$ with support $[-c(1-\epsilon),c(1-\epsilon)]$, the post-interaction speeds comply with the bound  $0\le v^\prime,w^\prime\le 1$ for any $v,w \in [0,1]$.

Hence, the evolution of the density $f$ follows the non-Maxwellian Boltzmann-type equation in weak form 
%in the weak form for the time variation of the expectation value of $\varphi(v)$, namely any observable function of the velocity $v$. Indeed, the time evolution is due, on average, to the variation of $\varphi$ in the binary interaction \eqref{binaryinteraction}:
\begin{equation}
	\label{phi}
	\begin{split}
		&\frac{d}{dt}\int_{0}^{1}\varphi(v)f(t,v,\z)dv  \\
		&=\frac{1}{2}\left\langle \int_{[0,1]^2} B(|v-w|,\z) (\varphi(v')-\varphi(v)+\varphi(w')-\varphi(w))f(t,v,\z)f(t,w,\z)dvdw \right \rangle.
	\end{split}
\end{equation}
In \eqref{phi} the uncertain interaction kernel $B(|v-w|,\z)$ describes the frequency of interactions and depends on the relative velocity $|v-w|$ as follows
\begin{equation}
\label{eq:B_traf}
	B(|v-w|,\z)=|v-w|^{\alpha(\z)}, \qquad \alpha(\z)\geq0.
\end{equation} 
%such that if $\alpha(\z) \equiv 0$ we collapse on the Maxwellian setting where all vehicles interact. 
%Early works on traffic dynamics have always taken into account different cross sections in order to identify more frequent speed changes, we mention in this direction the pioneering work of Prigogine and Paveri-Fontana \cite{paveri1975TR,prigogine1971BOOK} and the more recent survey \cite{Herty2010} together with the references therein. 
Since a priori information on the frequency of interaction is missing, it seem reasonable to introduce an additional uncertain exponent $\z$ of the cross section as an uncertain quantity. 

From equation \eqref{phi} we can compute the evolution of mean speed $V(t,\z)$ of the system. We fix $\varphi(v)=v$ and the evolution of $V(t,\z) = \int_0^1 vf(t,v,\z)dv$ is given by 
 \begin{equation}
	\label{Vmean}
	\frac{dV(t,\z)}{dt}=\frac{\epsilon}{2}\int_{[0,1]^2}|v-w|^{\alpha(\z)}I(v,w,\z)f(t,v,\z)f(t,w,\z)dvdw,
\end{equation}
whose solution depends on the specific interaction kernel considered. For Maxwellian particles, corresponding to the choice $\alpha(\z)\equiv0$ we recall the results of \cite{tosin2019MMS,tosin2021MCRF} from which we are able to find a close equation for the time evolution of the mean speed $V(t,\z)$. 

For any other $\alpha(\z)>0$, the mean speed $V(t,\z)$ depends on higher momenta and its evolution cannot be expressed in closed form. To investigate the influence of the uncertain interaction kernel on the evolution of $V(t,\z)$, we fix $\alpha(\z) \ne 0$. If $\alpha(\z) = 1$ we get  $V^-(t,\z) \le V(t,\z)\le V^+(t,\z)$ where

\begin{figure}
	\begin{subfigure}{0.5\textwidth}
		\centering
		\includegraphics[width=0.75\linewidth]{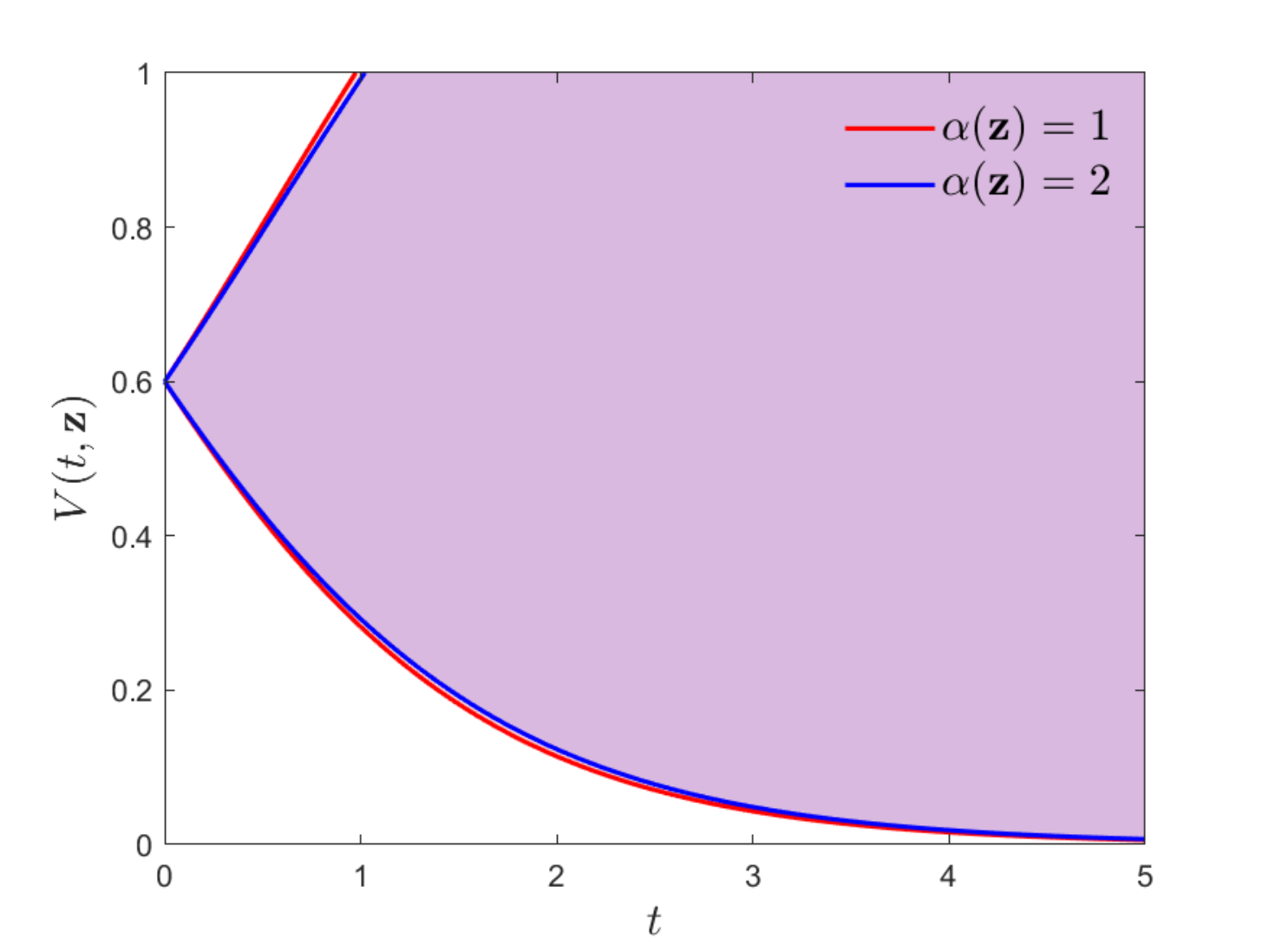}
		\caption{$\rho=0.4$, $\mu(\z)=1$}
		\label{fig:sub41}
	\end{subfigure}
	\begin{subfigure}{0.5\textwidth}
		\centering
		\includegraphics[width=0.75\linewidth]{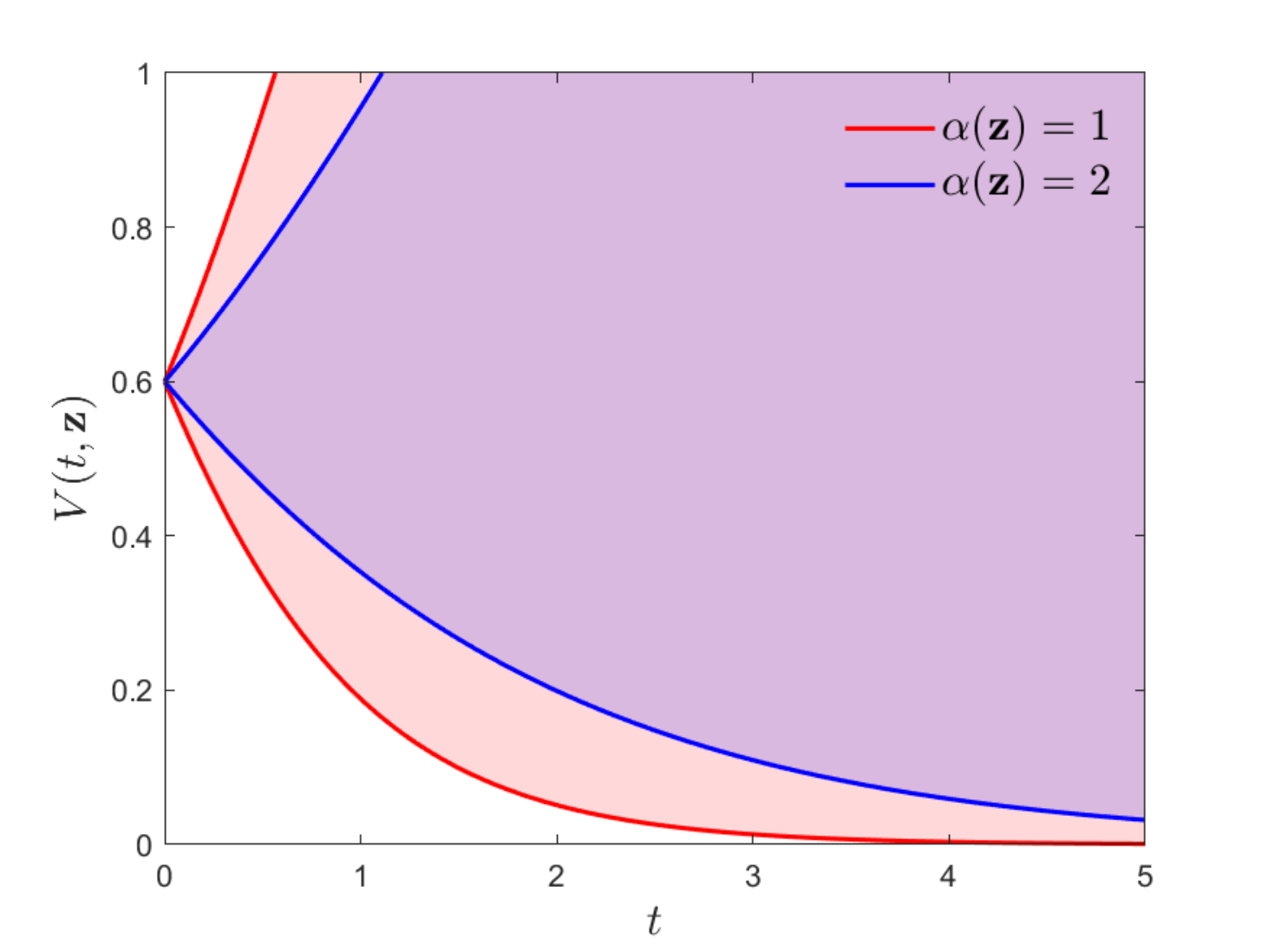}
		\caption{$\rho=0.4$, $\mu(\z)=3$}
		\label{fig:sub42}
	\end{subfigure}
	\begin{subfigure}{0.5\textwidth}
		\centering
		\includegraphics[width=0.75\linewidth]{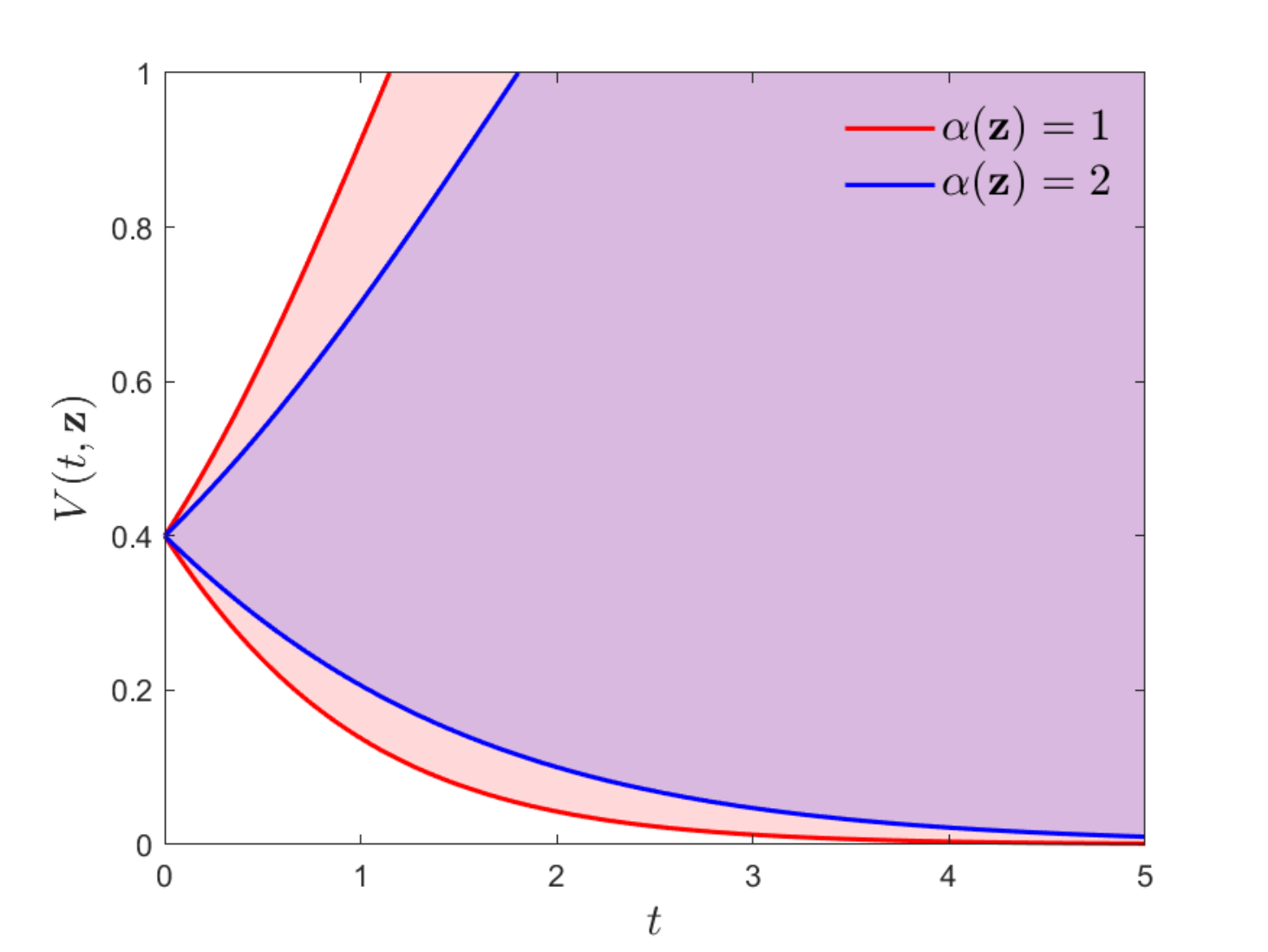}
		\caption{$\rho=0.6$, $\mu(\z)=1$}
		\label{fig:sub43}
	\end{subfigure}
	\begin{subfigure}{0.5\textwidth}
		\centering
		\includegraphics[width=0.75\linewidth]{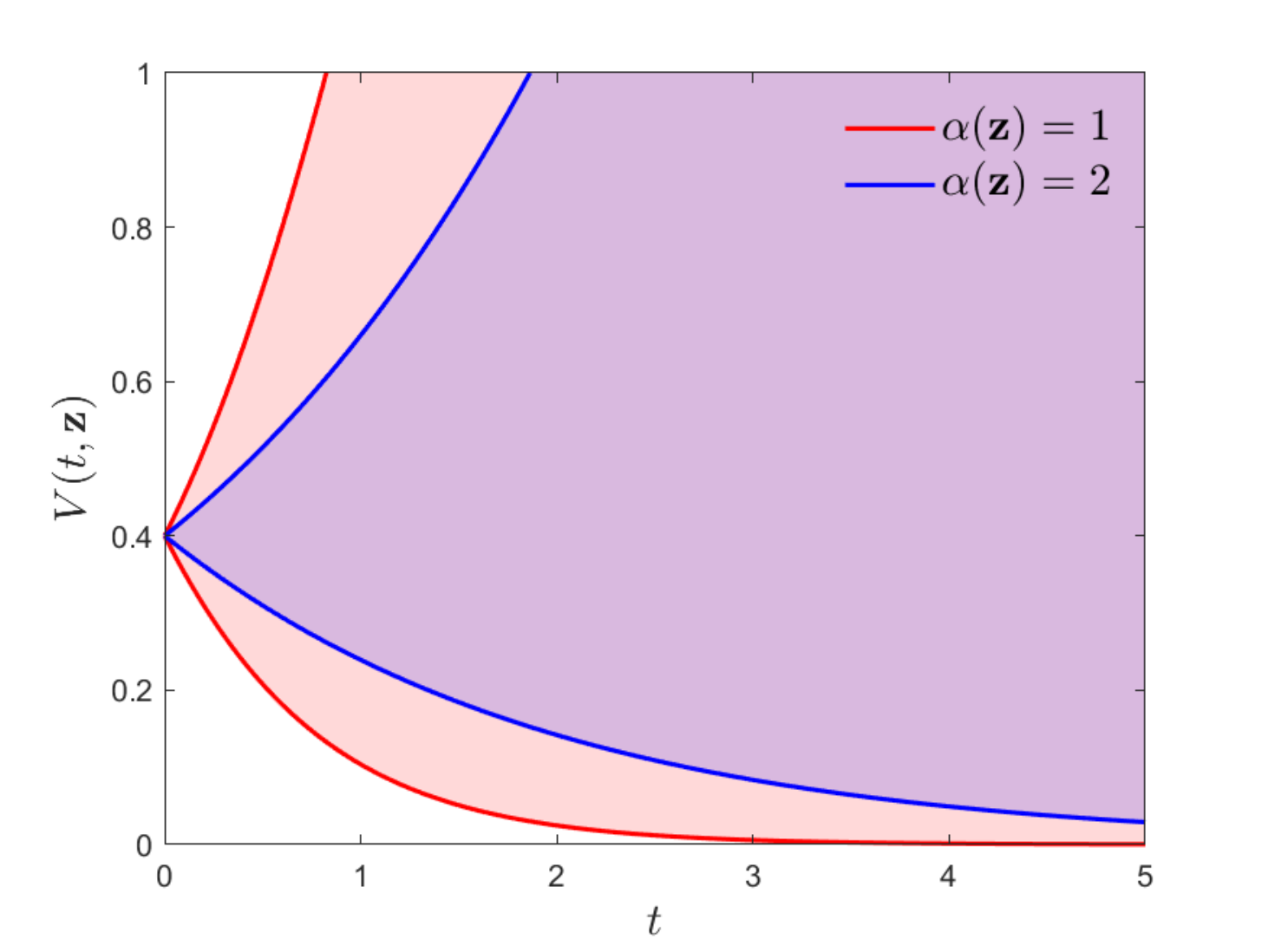}
		\caption{$\rho=0.6$, $\mu(\z)=3$}
		\label{fig:sub44}
	\end{subfigure}
	\caption{Upper and lower bounds for the time evolution of the mean velocity $V(t,\z)$ for $\alpha(\z)=1$ (red line) and $\alpha(\z)=2$ (blue line), for different values of the traffic density $\rho$ and the parameter $\mu(\z)$. The initial velocity is $V_{0}=1-\rho$.}
	\label{fig:test4}
\end{figure} 

%Both ODEs are of the Bernoulli-type with structure 
%\[
%\left| \frac{dV(t;\,z_{1})}{dt} \right| \leq aV(t;\,z_{1})-bV^2(t;\,z_{1}),
%\]
%with $a$ and $b$ coefficients. Removing the absolute value, upper $(+)$ and lower $(-)$ bounds for the time evolution of $V(t;\,z_{1})$ can be determined
%\begin{equation*}
%	bV^2(t;\,z_{1})-aV(t;\,z_{1})\leq\frac{dV(t;\,z_{1})}{dt}\leq aV(t;\,z_{1})-bV^2(t;\,z_{1}),
%\end{equation*}
%whose solutions are
\begin{equation}
	\label{bernoulli2}
	\begin{split}
		& V^{+}(t,\z)=\left[\frac{C_2(\z)}{C_1(\z)}+e^{-C_1(\z)t}\left(\frac{1}{V_{0}}-\frac{C_2(\z)}{C_1(\z)}\right) \right]^{-1} \\ 
		& V^{-}(t,\z)=\left[\frac{C_2(\z)}{C_1(\z)}+e^{C_1(\z)t}\left(\frac{1}{V_{0}}-\frac{C_2(\z)}{C_1(\z)}\right) \right]^{-1} 
	\end{split}
\end{equation} 
and 
\begin{equation}
	\label{bernoulli}
	\begin{split}
	C_1(\z) = 
	\begin{cases} 
		\frac{\epsilon}{2}(3-P(\rho,\z)-P^2(\rho,\z)) & \beta>0\\
		\epsilon P(\rho,\z) & \beta \le 0
	\end{cases}
	\\
	C_2(\z) = 
	\begin{cases} 
		\frac{\epsilon}{2}(1+P(\rho,\z)-P^2(\rho,\z)) & \beta> 0  \\ 
		\frac{\epsilon}{2}(1+P(\rho,\z)-P^2(\rho,\z)) & \beta\leq 0.
	\end{cases}
	\end{split}
\end{equation} 

Let us consider now $\alpha(\z) =2 $. We define the following constants $C_1(\z)=\epsilon P(\rho,\z)\geq0$ and $C_2(\z)=\frac{\epsilon}{2}(P^2(\rho,\z)+1-P(\rho,\z))>0$ and from \eqref{Vmean} we get $V^-(t,\z)\le V(t,\z) \le V^+(t,\z)$ where
\begin{equation}
	\begin{split}
		 V^{+}(t,\z)&=\left[\frac{C_1(\z)}{C_1(\z)+C_2(\z)}+e^{-(C_1(\z)+C_2(\z))t}\left(\frac{1}{V_{0}}-\frac{C_1(\z)}{C_1(\z)+C_2(\z)}\right) \right]^{-1} \\ 
		 V^{-}(t,\z)&=\left[\frac{C_1(\z)}{C_1(\z)+C_2(\z)}+e^{(C_1(\z)+C_2(\z))t}\left(\frac{1}{V_{0}}-\frac{C_1(\z)}{C_1(\z)+C_2(\z)}\right) \right]^{-1},
	\end{split}
\end{equation} 
with $ V(0,\z) = V_0 \in [0,1]$. We point the interested reader to Appendix \ref{append:A} for additional details. The impact of the interaction kernel \eqref{eq:B_traf} on the introduced traffic model is shown in Figure \ref{fig:test4}.

As in Section \ref{subsec:FP}, the Boltzmann model \eqref{phi} can be approximated through a surrogate Fokker-Planck model in the quasi-invariant limit. In particular, for the introduced traffic model with interaction kernel we get
\begin{equation}
	\label{FP_nocontrol}
	\begin{split}
		\partial_{t} f(t,v,\z)=&\frac{\sigma^2}{2}\partial_{v}^{2}\left[\left(\int_{0}^{1} B(|v-w|,\z)f(t,w,\z)dw \right)D^{2}(v,\rho)f(t,v,\z)\right] \\
		&-\partial_{v}\left[\left(\int_{0}^{1} B(|v-w|,\z)I(v,w,\z)f(t,w,\z)dw \right)f(t,v,\z)\right], 
	\end{split}
\end{equation} 
where $I$ has been defined in \eqref{eq:Inter_traf} and $B$ is the interaction kernel as in \eqref{eq:B_traf}. For the introduced Fokker-Planck equation, at variance with the cases in Sections \ref{subsec:gambling}-\ref{subsec:wealth}, we cannot compute analytically the equilibrium distribution of the Fokker-Planck model unless $\alpha(\z) = 0$, corresponding to the Maxwellian scenario. 

\section{DSMC stochastic Galerkin methods}
\label{sec:DSMC}

In this section, we revise the construction of a stochastic Galerkin version of the classical DSMC Algorithm for non-Maxwellian particles, see e.g. \cite{pareschi2001ESAIMP,pareschi2013BOOK}. In more detail, we extend the Direct Simulation Monte Carlo stochastic Galerkin (DSMC-sG) methods, introduced in the gas dynamic framework \cite{pareschi2020JCP}, to the models with uncertain parameters proposed in Section \ref{sec:nonM_models}. Next, we provide consistency results of the  DSMC-sG algorithm with respect to relevant observables and in the reconstruction of the kinetic density.

\subsection{DSMC-sG for non-Maxwellian models with uncertainties}\label{subsec:DSMC}
We first rewrite \eqref{eq:nonmax} in strong form to highlight the gain and loss part of the Boltzmann-type equation:
\begin{equation}
\label{eq:boltzmann_strong}
\begin{split}
	\frac{\partial}{\partial t}f(t,v,\z) & = Q(f,f)(t,v,\z) \\
	& = \left\langle \int_{\V} B(v,w,\z) \left( \dfrac{1}{J}f(t,{}^\prime v,\z)f(t,{}^\prime w,\z) - f(t,v,\z)f(t,w,\z) \right) dw \right\rangle, 
\end{split}
\end{equation}
where $J$ is the absolute value of the Jacobian of the considered transformation. We denote by $Q_\Sigma$ the operator obtained replacing the kernel $B(v,w,\z)$ with $B_\Sigma(v,w,\z)$ given by
\begin{equation}
	B_{\Sigma}(v,w,\z)=\min\{ B(v,w,\z) ,\Sigma\},
\end{equation}
where $\Sigma=\max\{B(v,w,\z)\}>0$ is an upper bound for the interaction kernel. By decomposing $Q_\Sigma(f,f)$ in its gain and loss part we can rewrite the interaction step as 
\[
\dfrac{\partial f}{\partial t}(t,v,\z) =  P(f,f)(t,v,\z) - \mu\Sigma f(t,v,\z),
\] 
with 
\[
P(f,f)(t,v,\z) =  Q^+_\Sigma(f,f)(t,v,\z) + f(t,v,\z) \left\langle \int_{\V}[\Sigma - B_\Sigma(v,w,\z)]f(t,w,\z)dw \right\rangle, 
\]
and 
$$Q^+_\Sigma(f,f) = \left \langle \int_{\V} B_\Sigma(v,w,\z) \dfrac{1}{J} f(t,{}^\prime v,\z)f(t,{}^\prime w,\z) dw \right \rangle,$$
with $\mu = \int_{\V} f(t,w,\z)dw$.
 
Let us now consider a time interval $[0,T]$ and let us discretize it in $\nt$ interval of size $\Delta t>0$. We denote by $f^n(v,\z)$ the approximation of $f(t^n,v,\z)$ and we consider the forward Euler scheme
\[
f^{n+1}=(1-\mu\Sigma\Delta t)f^n + \mu\Sigma\Delta t \dfrac{P(f^n,f^n)}{\mu\Sigma}, 
\]
where $f^{n+1}$ is a probability density provided $\mu\Sigma\Delta t\le 1$.

Then, we consider a sample of $N$ particles $v_{i}(\z,t)$, $i = 1,\dots,N$, from the kinetic solution of the Boltzmann model at time $t$, and we approximate $v_i(\z,t)$ by its generalized polynomial chaos expansion
\begin{equation}
	\label{eq:gPC}
	v_{i}^{M}(\z,t)=\sum_{h=0}^{M} \hat{v}_{i,h}(t)\Phi_{h}(\z),
\end{equation}
where $\{\Phi_{h}(\z)\}_{h=0}^{M}$, $M\in\mathbb{N}$, is a  {set of orthogonal polynomials} of degree less or equal to $M$, orthonormal with respect to the probability density function $p(\z)$
\begin{equation}
	\int_{I_\z}\Phi_{h}(\z) \Phi_{k}(\z) p(\z) d\z = \mathbb{E}_{\z}\left[\Phi_{h}(\cdot) \Phi_{k}(\cdot) \right] = \delta_{hk}, \qquad h,k=0,\dots,M,
\end{equation}
where $I_\z$ is the sample space and $\delta_{hk}$ is the Kronecker delta. The choice for the orthogonal polynomials obviously depends on the distribution of the parameters and follows the so-called Wiener-Askey scheme \cite{Xiu2010,Xiu2002}. In \eqref{eq:gPC}, $\hat{v}_{i,h}(t)$ is the projection of the velocity in the subspace generated by the polynomial of degree $h=0,\dots,M$
\begin{equation}
	\hat{v}_{i,h}(t)=\int_{I_\z}v_{i}(\z,t)\Phi_{h}(\z) p(\z) d\z =\mathbb{E}_{\z}\left[v_{i}(t,\cdot)\Phi_{h}(\cdot)\right].
\end{equation}

To perform collision with a non-Maxwellian kernel, we may rewrite the general binary interaction scheme \eqref{eq:inter} for two particles $v_i = v_{i}(\z,t)$, $w_j = w_{j}(\z,t)$, highlighting the acceptance-rejection process introduced by the classical Nanbu-Babovski method \cite{pareschi2001ESAIMP}
\begin{equation}
	\label{eq:interactionDSMC}
	\begin{split}
		& v_{i}'(\z,t) = v_{i} + \chi \left( \Sigma\xi < B(v_i,w_j,\z) \right) ( \epsilon I_1(v_i,w_j,\z) + D_1(v_{i})\eta_\epsilon ) \\
		& w_{j}'(\z,t) = w_{j} + \chi \left( \Sigma\xi < B(v_i,w_j,\z) \right) ( \epsilon I_2(v_i,w_j,\z) + D_2(w_{j})\eta_\epsilon )
	\end{split}
\end{equation}
where $\chi(\cdot)$ is the indicator function and $\xi$ a uniform random number in $(0,1)$. Then, we substitute the velocities $v_{i},\,w_{i}$ with their gPC approximation $v_{i}^{M},\,w_{j}^{M}$ and we project against $\Phi_{h}(\z)p(\z)d\z$ on $I_\z$ for every $h=0,\dots,M$. We obtain
\begin{equation}
	\label{eq:interactionDSMCproj}
	\begin{split}
		&\hat{v}_{i,h}'(t)=\hat{v}_{i,h}(t)+\hat{V}^{h}_{i,j} \\
		&\hat{w}_{j,h}'(t)=\hat{w}_{j,h}(t)+\hat{W}^{h}_{i,j},
	\end{split}
\end{equation}
where $\hat{V}^{h}_{i,j}$, $\hat{W}^{h}_{i,j}$ are the so-called collisional matrices
\begin{equation}
	\label{eq:collisionmatrix1}
	\begin{split}
		& \hat{V}^{h}_{i,j}=\int_{I_\z} \chi\left( \Sigma\xi < B(v^M_i,w^M_j,\z) \right) \left( ( \epsilon I_1(v^M_i,w^M_j,\z) + D_1(v^M_{i})\eta_\epsilon ) \right)\Phi_{h}(\z) p(\z) d\z \\
		& \hat{W}^{h}_{i,j}=\int_{I_\z} \chi\left( \Sigma\xi < B(v^M_i,w^M_j,\z) \right) \left( ( \epsilon I_2(v^M_i,w^M_j,\z) + D_2(w^M_{j})\eta_\epsilon ) \right)\Phi_{h}(\z) p(\z) d\z.
	\end{split}
\end{equation}
We stress the fact that the new binary interaction for the projections \eqref{eq:interactionDSMCproj} does not depend on the uncertain parameter $\z$. The DSMC-sG method is summarised in  Algorithm \ref{DSMC_sG_VHS}. 

\begin{algorithm}
	\label{DSMC_sG_VHS}
	(DSMC-sG).
	
	\begin{enumerate}
		\item compute the initial gPC expansion $\{v_{i}^{M,0},\,i=1,\dots,N\}$ from the  initial distribution $f(0,v,\z)$;
		\item for $n=0$ to $\nt-1$, \\
		given the projections $\{\hat{v}_{i,h}^{n},\,i=1,\dots,N,\,h=0,\dots,M\}$: 
		\begin{itemize}
			\item compute an upper bound $\Sigma$ of the kernel;
			\item set $N_{c}=\textrm{Sround}(\Sigma\mu\Delta t N/2)$; 			\item select $N_{c}$ dummy collision pairs, denoted by $(\hat{v}_{i,h},\,\hat{w}_{j,h})$ with $i=j=1,\dots,N_{c}$, uniformly among all possible pairs and for those: 
			\begin{itemize}
				\item select $\xi$ uniformly in $(0,\,1)$;
				\item compute the collision matrices $\hat{V}^{h}_{i,j}$, $\hat{W}^{h}_{i,j}$ \eqref{eq:collisionmatrix1} for $i,\,j=1,\dots,N_{c},\,h=0,\dots,M$; 
				\item perform the collision between $i$ and $j$ and compute $\hat{v}'_{i,h}$ and $\hat{w}'_{j,h}$ according to the collision law \eqref{eq:interactionDSMCproj};
				\item set $\hat{v}_{i,h}^{n+1}=\hat{v}'_{i,h}$ and $\hat{w}_{j,h}^{n+1}=\hat{w}'_{j,h}$;
			\end{itemize}
			\item set $\hat{v}_{i,h}^{n+1}=\hat{v}_{i,h}$ and $\hat{w}_{j,h}^{n+1}=\hat{w}_{j,h}$ for all the particles that have not been selected;
		\end{itemize}
		end for,
	\end{enumerate}
\end{algorithm}
where by $\textrm{Sround}(x)$ we denote the stochastic rounding of a positive real number $x$
\[
\textrm{Sround} = 
\begin{cases}
\lfloor x \rfloor + 1 & \textrm{with\, probability}\;x - \lfloor x \rfloor\\
\lfloor x \rfloor       & \textrm{with\, probability}\;1-x + \lfloor x \rfloor, 
\end{cases}
\]
where $\lfloor x \rfloor $ denotes the integer part of $x$. 
\subsection{Consistency estimates}
We want to evaluate the error produced by the DSMC-sG algorithm in the reconstructed distribution function and its moments. In the following, we denote by $f_\epsilon(t,v,\z)$ the solution of the Boltzmann equation \eqref{eq:nonmax_rescale} with binary updates \eqref{eq:inter} and by $f(t,v,\z)$ the corresponding mean field approximation, weak solution of the Fokker-Planck equation \eqref{eq:FP_general}. We introduce then the empirical density functions  
\[
f_{\epsilon,N}(t,v,\z) = \dfrac{1}{N} \sum_{i=1}^N \delta(v-v_i(t,\z)) \qquad f^M_{\epsilon,N}(t,v,\z) = \dfrac{1}{N} \sum_{i=1}^N \delta(v-v^M_i(t,\z)), 
\]
where $\delta(\cdot)$ is the Dirac delta function. Being $\varphi$ any test function, we denote by 
\[
\left\langle \varphi, f_\epsilon \right\rangle (\z, t) = \int_{\V} f_\epsilon(t,v,\z) dv 
\]
its expectation with respect to the distribution function $f_\epsilon(t,v,\z)$, so that we have 
\[
\left\langle \varphi, f_{\epsilon,N} \right\rangle (\z, t) = \dfrac{1}{N} \sum_{i=1}^{N} \varphi ( v_i (t,\z) ), \qquad  \left\langle \varphi, f^M_{\epsilon,N} \right\rangle (\z, t) = \dfrac{1}{N} \sum_{i=1}^{N} \varphi ( v^M_i (t,\z) ).
\]
From the central limit theorem we have the following result \cite{caflisch1998monte}
\begin{lemma} \label{lemma1}
	If we denote by $\mathbb{E}_V[\cdot]$ the expectation with respect to $f_\epsilon$ in the velocity space,  {for each $\z$} the root mean square error satisfies
	\[
	 {   \mathbb{E}_V \bigg[ \Big( \left\langle \varphi, f_\epsilon \right\rangle (\z, t) - \left\langle \varphi, f_{\epsilon,N} \right\rangle (t,\z)    \Big)^2 \bigg]^{1/2} } = \dfrac{\sigma_\varphi (t,\z)}{N^{1/2}}
	\]
	with
	\[
	\sigma^2_\varphi (t,\z)  =  \int_V   {\Big(} \varphi(v) - \left\langle \varphi, f_\epsilon \right\rangle (t,\z)  {\Big)^2}  f_\epsilon(t,v,\z) dv.
	\]
\end{lemma}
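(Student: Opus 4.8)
The plan is to observe that, once the random parameter $\z$ is frozen, the $N$ particles $v_i(t,\z)$ produced by the DSMC procedure form — in the regime in which the classical Nanbu--Babovski analysis applies — an i.i.d.\ sample drawn from the kinetic density $f_\epsilon(t,\cdot,\z)$. Consequently $\left\langle \varphi, f_{\epsilon,N} \right\rangle(t,\z) = \tfrac1N\sum_{i=1}^N \varphi(v_i(t,\z))$ is simply the empirical mean of the i.i.d.\ random variables $X_i := \varphi(v_i(t,\z))$, and the statement reduces to the elementary second-moment identity for a sample mean (the finite-$N$ fact underlying the central limit theorem).

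Concretely, I would first fix $\z$ and note that, since each $v_i(t,\z)$ is distributed according to $f_\epsilon(t,\cdot,\z)$,
\[
\mathbb{E}_V[X_i] = \int_{\V} \varphi(v) f_\epsilon(t,v,\z)\,dv = \left\langle \varphi, f_\epsilon \right\rangle(t,\z),
\]
so the empirical mean is unbiased. Then, writing $S_N := \left\langle \varphi, f_{\epsilon,N} \right\rangle(t,\z)$, expanding the square and using linearity of $\mathbb{E}_V$ together with the fact that the cross terms vanish by independence, $\mathbb{E}_V[(X_i-\mathbb{E}_V[X_i])(X_j-\mathbb{E}_V[X_j])]=0$ for $i\neq j$, one gets
\[
\mathbb{E}_V\!\left[\big(S_N - \left\langle \varphi, f_\epsilon \right\rangle(t,\z)\big)^2\right] = \frac{1}{N^2}\sum_{i=1}^N \mathbb{E}_V\!\left[(X_i-\mathbb{E}_V[X_i])^2\right] = \frac{1}{N}\int_{\V}\big(\varphi(v) - \left\langle \varphi, f_\epsilon \right\rangle(t,\z)\big)^2 f_\epsilon(t,v,\z)\,dv,
\]
which is exactly $\sigma_\varphi^2(t,\z)/N$. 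Taking square roots yields the claimed identity for the root mean square error.

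The only genuinely delicate point — which I would state as a hypothesis inherited from the classical DSMC literature rather than re-prove — is the independence of the particle ensemble: the Nanbu--Babovski collision step only produces an i.i.d.\ sample from $f_\epsilon$ in the appropriate large-$N$ (propagation-of-chaos) limit, see \cite{caflisch1998monte,pareschi2001ESAIMP}, and without it the cross terms in the variance expansion need not cancel. I expect this to be the main conceptual obstacle, though for the present lemma it is simply invoked. Note also that, since the estimate is asserted \emph{for each} $\z$, no integration over the random space is performed here, so the stochastic Galerkin projection $v_i\mapsto v_i^M$ does not enter: the argument is purely the pathwise (in $\z$) Monte Carlo variance computation, and the mention of the central limit theorem serves to identify $\sigma_\varphi(t,\z)/N^{1/2}$ as the order of the fluctuations.
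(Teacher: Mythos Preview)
Your argument is correct and is precisely the standard variance computation for an empirical mean of i.i.d.\ samples. The paper does not actually supply a proof of this lemma: it merely states the result as a consequence of the central limit theorem and cites Caflisch's Monte Carlo review \cite{caflisch1998monte}. So your proposal is, if anything, more explicit than the paper's treatment, and the one caveat you flag --- that the i.i.d.\ structure of the particle ensemble is inherited from the classical DSMC propagation-of-chaos analysis rather than re-derived --- is exactly the implicit assumption the paper is making as well.
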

\noindent If $H^r(I_\z)$ is a weighted Sobolev space
\[
H^r_p(I_\z) = \bigg\{ v : I_\z \rightarrow \mathbb{R} \; : \; \dfrac{\partial^k v}{\partial z^k} \in L^2_p(I_\z), \; 0 \leq k \leq r \bigg\},
\]
from the polynomial approximation theory \cite{Xiu2010}, we have the following spectral estimate
\begin{lemma} \label{lemma2}
	For any $v(\z) \in H^r_p(I_\z), \, r\geq 0$, there exists a constant $C$ independent of $M>0$ such that 
	\[
	\| v - v^M \|_{L^2_p(I_\z)} \leq \dfrac{C}{M^r} \| v \|_{H^r_p(I_\z)}.
 	\]
\end{lemma}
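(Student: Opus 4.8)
\textbf{Proof plan for Lemma \ref{lemma2}.} The idea is to exploit that the gPC truncation $v^M$ is nothing but the $L^2_p(I_\z)$-orthogonal projection of $v$ onto the space $\mathbb{P}_M$ of polynomials of degree at most $M$, and then to control the tail of the expansion by the Sobolev regularity of $v$. First, since $\{\Phi_h\}_{h\ge0}$ is a complete orthonormal system of $L^2_p(I_\z)$ whose first $M+1$ elements span $\mathbb{P}_M$, the expansion in \eqref{eq:gPC} together with Parseval's identity gives $\|v-v^M\|_{L^2_p(I_\z)}^2=\sum_{h>M}|\hat v_h|^2$ with $\hat v_h=\mathbb{E}_{\z}[v\,\Phi_h]$; equivalently, $v^M$ realises $\min_{q\in\mathbb{P}_M}\|v-q\|_{L^2_p(I_\z)}$. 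Writing $h^{-2r}\le M^{-2r}$ for $h>M$, it therefore suffices to establish the spectral bound $\sum_{h\ge0}(1+h)^{2r}|\hat v_h|^2\le C^2\|v\|_{H^r_p(I_\z)}^2$ with $C$ depending only on $r$ and on the fixed density $p$, since then $\|v-v^M\|_{L^2_p(I_\z)}^2\le M^{-2r}\sum_{h>M}h^{2r}|\hat v_h|^2\le M^{-2r}C^2\|v\|_{H^r_p(I_\z)}^2$.

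Second — and this is the heart of the argument — I would establish the decay of the coefficients $\hat v_h$ from the Sturm--Liouville structure of the Wiener--Askey families. Each orthonormal polynomial $\Phi_h$ is an eigenfunction, $\mathcal{L}_p\Phi_h=\lambda_h\Phi_h$, of a second-order differential operator $\mathcal{L}_p u=-p^{-1}(a\,p\,u')'$ that is self-adjoint in $L^2_p(I_\z)$, with eigenvalues $\lambda_h$ growing polynomially in $h$ (linearly in the Hermite and Laguerre cases, quadratically in the Jacobi case). Integrating by parts — using crucially that the boundary contributions vanish because the product $a\,p$ degenerates at the endpoints of $I_\z$, respectively $p$ decays at infinity — one obtains $\lambda_h\,\hat v_h=\mathbb{E}_{\z}[(\mathcal{L}_p v)\,\Phi_h]$; iterating this identity, together with one further ``half'' integration by parts through the Dirichlet form associated with $\mathcal{L}_p$ when the order is odd, yields $|\hat v_h|\le C\,\lambda_h^{-r/2}\|v\|_{H^r_p(I_\z)}$, because $\mathcal{L}_p$ and its iterates are differential operators of order at most $r$ whose $L^2_p$ norm is dominated by the $H^r_p$ norm. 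Summing over $h$ and using $\lambda_h\gtrsim(1+h)$ gives the spectral bound required in the first step. Alternatively, since $\|v-v^M\|_{L^2_p(I_\z)}$ is the best polynomial approximation error, one may simply invoke the Jackson-type estimate for weighted polynomial approximation associated with the classical weights, for which we refer to \cite{Xiu2010} and the references therein.

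The main obstacle is precisely the coefficient-decay estimate of the second step: making the repeated integration by parts rigorous requires checking that every boundary term vanishes, which is exactly where the compatibility between the weight $p$ and the degeneracy of the diffusion coefficient $a$ of the Sturm--Liouville operator is used; moreover, if one wants the sharp constant and rate in the Jacobi (bounded support) case, the natural norm on the right-hand side is a suitably weighted Sobolev norm rather than the plain $H^r_p$ one, the two being equivalent on the relevant scale up to constants depending only on $r$. For the scope of the present paper it is enough to quote the corresponding approximation theorem from \cite{Xiu2010}.
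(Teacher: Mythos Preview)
Your proposal is correct and follows the standard route to such spectral approximation estimates: orthogonal projection plus Parseval, then coefficient decay via the Sturm--Liouville structure of the Wiener--Askey families (with the Jackson-type best-approximation result as an alternative). There is nothing to compare, however, because the paper does not give a proof of Lemma~\ref{lemma2} at all: it simply states the estimate and attributes it to the polynomial approximation theory in~\cite{Xiu2010}. Your own final remark --- that for the present purposes it suffices to quote the approximation theorem from~\cite{Xiu2010} --- is therefore exactly the paper's ``proof''. The detailed Sturm--Liouville argument you sketch is a genuine proof outline and goes well beyond what the paper provides; the caveat you raise about the natural weighted Sobolev norm in the Jacobi case versus the plain $H^r_p$ norm used here is also a valid technical point that the paper does not address.
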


Next, for any random variable $W(\z,t)$ taking values in $L^2_p(I_\z)$, we define
\[
\| W \|_{L^2(I_\z,L^2(\V))} = \| \mathbb{E}_V [W^2]^{1/2} \|_{L^2_p(I_\z)},
\]
and equivalently
\[
\| W \|_{L^2(\V,L^2(I_\z))} =  \mathbb{E}_V \left[ \| W \|^2_{L^2_p(I_\z)}  \right]^{1/2},   
\]
see \cite{dimarco2020,pareschi2020JCP,pareschi2022}. 
Then, we have the following result  % \cite{pareschi2020JCP}
\begin{theorem} \label{theorem:error_estimate}
	Let $f(t,v,\z)$ be a probability density function,  {solution of the weak Fokker-Planck equation \eqref{eq:FP_general}}, and $f^M_{\epsilon,N}(t,v,\z)$ the empirical measure obtained from the $N$-particles sG approximation $\{v^M_i(\z,t)\}_i$,  {solution of the time scaled Boltzmann equation \eqref{eq:nonmax_rescale}}. If $v_i(\z,t)\in H^r_p(I_\z)$ for every $i=1,\dots,N$, and in the quasi-invariant limit $\epsilon \rightarrow 0$, we have the following estimate 
	\[
	\| \left\langle \varphi, f \right\rangle - \left\langle \varphi, f^M_{\epsilon,N} \right\rangle \|_{L^2(\V,L^2_p(I_\z))} \leq \dfrac{\|\sigma_{\varphi}\|_{L^2_p(I_\z)}}{N^{1/2}} + \dfrac{C}{M^r} \left( \dfrac{1}{N} \sum_{i=1}^{N} \| \nabla \varphi(\xi_i) \|_{L^2_p(I_\z)} \right),
	\]
	where $\varphi$ is a test function, $C>0$ is a constant independent on $M$ and $\xi_i=(1-\theta)v_i+\theta v^M_i,\,\theta\in(0,1)$.
	\begin{proof}
		Thanks to the triangular inequality we have 
		\begin{equation}
			\begin{split}
				\| \left\langle \varphi, f \right\rangle - \left\langle \varphi, f^M_{\epsilon,N} \right\rangle \|_{L^2(\V,L^2_p(I_\z))}  \leq & \underbrace{  \| \left\langle \varphi, f \right\rangle   - \left\langle \varphi, f_{\epsilon} \right\rangle \|_{L^2(\V,L^2_p(I_\z))}   }_{I}  \\ 
				& +\quad \underbrace{  \| \left\langle \varphi, f_{\epsilon} \right\rangle - \left\langle \varphi, f_{\epsilon,N} \right\rangle \|_{L^2(\V,L^2_p(I_\z))}  }_{II} \\
				& +\quad \underbrace{  \| \left\langle \varphi, f_{\epsilon,N} \right\rangle - \left\langle \varphi, f^M_{\epsilon,N} \right\rangle \|_{L^2(\V,L^2_p(I_\z))}   }_{III}.
			\end{split}
		\end{equation} 
		In the quasi-invariant  regime $\epsilon\rightarrow 0$, up to the extraction of a subsequence, we have 
		\[
		\lim_{\epsilon\to 0} f_\epsilon(t,v,\z) = f(t,v,\z),
		\]
		as a consequence we have
		\[
		\lim_{\epsilon\to 0} \| \left\langle \varphi, f \right\rangle   - \left\langle \varphi, f_{\epsilon} \right\rangle \|_{L^2(\V,L^2(I_\z))} = 0,
		\]
		and the first term vanishes in the quasi-invariant limit. 
		The second term can be evaluated exploiting the result of Lemma \ref{lemma1}. Therefore, we have
		\[
		II = \dfrac{\| \sigma_{\varphi}(\z) \|_{L^2(I_\z)}}{N^{1/2}}.
		\]
Finally, we have
		\[
		 \left\| \dfrac{1}{N} \sum_{i=1}^{N} \big(  \varphi(v_i) - \varphi(v^M_i)  \big)\right\|_{L^2(\V,L^2_p(I_\z))} \leq \dfrac{1}{N} \sum_{i=1}^{N} \| \varphi(v_i) - \varphi(v^M_i)  \|_{L^2(\V,L^2_p(I_\z))},
		\]
		and from the mean value theorem $\varphi(v_i) - \varphi(v^M_i) = \nabla \varphi(\xi_i) \cdot (v_i - v^M_i)$, for $\xi_i=(1-\theta)v_i+\theta v^M_i,\,\theta\in(0,1)$. Thanks to Lemma \ref{lemma2} with $C=\max_i C_i \| v_i \|_{H^r_p(I_\z)}$ we have
		\[
		III \leq  \dfrac{1}{N} \sum_{i=1}^{N} \| \nabla\varphi(\xi_i)  \|_{L^2_p(I_\z)} \|v_i - v^M_i  \|_{L^2_p(I_\z)} \leq \dfrac{C}{M^r} \bigg( \dfrac{1}{N} \sum_{i=1}^{N} \| \nabla\varphi(\xi_i)  \|_{L^2_p(I_\z)} \bigg).
		\]
	\end{proof}
\end{theorem}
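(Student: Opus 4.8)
The plan is to interpose two intermediate densities between $f$ and $f^M_{\epsilon,N}$ and to control the three resulting pieces with the tools already assembled. Concretely, I would insert the exact rescaled Boltzmann density $f_\epsilon$ (solution of \eqref{eq:nonmax_rescale}) and the $N$-particle empirical measure $f_{\epsilon,N}$ built from the \emph{non-projected} samples $v_i(\z,t)$, and apply the triangle inequality in the $L^2(\V,L^2_p(I_\z))$ norm to bound $\|\langle\varphi,f\rangle-\langle\varphi,f^M_{\epsilon,N}\rangle\|$ by $I+II+III$, with $I=\|\langle\varphi,f\rangle-\langle\varphi,f_\epsilon\rangle\|$, $II=\|\langle\varphi,f_\epsilon\rangle-\langle\varphi,f_{\epsilon,N}\rangle\|$, and $III=\|\langle\varphi,f_{\epsilon,N}\rangle-\langle\varphi,f^M_{\epsilon,N}\rangle\|$. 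Each term then corresponds to one source of error: the modelling/scaling error, the statistical (Monte Carlo) error, and the gPC projection error.

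For the scaling term $I$, I would invoke the quasi-invariant analysis of Section~\ref{subsec:FP}: as $\epsilon\to 0$ the rescaled Boltzmann solution $f_\epsilon$ converges, at least weakly and up to extraction of a subsequence, to the weak solution $f$ of the Fokker-Planck equation \eqref{eq:FP_general}, so that $\langle\varphi,f_\epsilon\rangle\to\langle\varphi,f\rangle$ and $I\to 0$ in the limit. For the statistical term $II$, the key observation is that Lemma~\ref{lemma1} furnishes, for each fixed $\z$, a root-mean-square error $\sigma_\varphi(t,\z)/N^{1/2}$ in the velocity variable; squaring, integrating against $p(\z)\,d\z$ over $I_\z$ and taking the square root converts this into $II=\|\sigma_\varphi\|_{L^2_p(I_\z)}/N^{1/2}$, which is precisely the first term on the right-hand side of the claimed estimate.

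The projection term $III$ is where the spectral rate in $M$ appears. Since the two empirical measures carry the \emph{same} particle labels, the difference telescopes: $\langle\varphi,f_{\epsilon,N}\rangle-\langle\varphi,f^M_{\epsilon,N}\rangle=\frac{1}{N}\sum_{i=1}^{N}\big(\varphi(v_i)-\varphi(v_i^M)\big)$. I would pull the finite sum out of the norm by the triangle inequality, apply the mean value theorem componentwise, $\varphi(v_i)-\varphi(v_i^M)=\nabla\varphi(\xi_i)\cdot(v_i-v_i^M)$ with $\xi_i=(1-\theta)v_i+\theta v_i^M$, $\theta\in(0,1)$, then use Cauchy--Schwarz in $I_\z$ and finally the spectral estimate of Lemma~\ref{lemma2} applied to each $v_i(\z,t)\in H^r_p(I_\z)$. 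Absorbing the per-particle constants into $C=\max_i C_i\|v_i\|_{H^r_p(I_\z)}$ gives $III\le\frac{C}{M^r}\big(\frac{1}{N}\sum_{i=1}^{N}\|\nabla\varphi(\xi_i)\|_{L^2_p(I_\z)}\big)$, which together with the bounds on $I$ and $II$ yields the theorem.

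The step I expect to be the main obstacle is term $I$: the convergence $f_\epsilon\to f$ invoked there is only the \emph{formal} quasi-invariant limit derived in Section~\ref{subsec:FP}, so a fully rigorous treatment would require compactness and uniqueness results for the nonlinear nonlocal Fokker-Planck equation \eqref{eq:FP_general} that are beyond the present scope; accordingly I would phrase this step as a limit "up to a subsequence", consistent with the level of rigour elsewhere in the paper. A secondary technical point is to keep the two norms $\|\cdot\|_{L^2(\V,L^2_p(I_\z))}$ and $\|\cdot\|_{L^2_p(I_\z,L^2(\V))}$ used coherently when passing from Lemma~\ref{lemma1}, stated in terms of $\mathbb{E}_V$, to the bound on $II$.
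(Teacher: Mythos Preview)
Your proposal is correct and follows essentially the same route as the paper: the identical three-term splitting via $f_\epsilon$ and $f_{\epsilon,N}$, with $I$ handled by the formal quasi-invariant limit (up to a subsequence), $II$ by Lemma~\ref{lemma1} integrated against $p(\z)$, and $III$ by the mean value theorem combined with Lemma~\ref{lemma2} and the choice $C=\max_i C_i\|v_i\|_{H^r_p(I_\z)}$. Your closing remarks on the formal nature of the convergence in $I$ and on the coherence of the two mixed norms are apt and, if anything, more scrupulous than the paper itself.
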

%We observe that Theorem \ref{theorem:error_estimate} with the choice $\varphi(v)=v^n, \, n\in\R$ gives an estimate for the moments of the distribution.

Next, we introduce a uniform grid in the domain $\V$, with $\Delta v > 0$ width of the cell, and we denote by $\Sv \geq 0$ a smoothing function such that
\[
\Delta v \int_\V \Sv (v) dv = 1.
\] 
We consider the approximations of the density function obtained by
\[
f_{\epsilon,N,\Delta v}(t,v,\z) = \dfrac{1}{N} \sum_{i=1}^N \Sv(v-v_i(t,\z)) \qquad f^M_{\epsilon,N,\Delta v}(t,v,\z) = \dfrac{1}{N} \sum_{i=1}^N \Sv(v-v^M_i(t,\z)),
\]
observing that the standard histogram reconstruction corresponds to the choice $\Sv(v)=\chi(|v|\leq\Delta v/2)/\Delta v$. Defining
\[
f_{\epsilon,\Delta v}(t,v,\z) = \int_\V \Sv(v-w)f(t,w,\z)dw,
\]
we have the following result
\begin{theorem} \label{theorem:error_f}
	The error introduced by the reconstruction of the distribution in the DSMC-sG method, in the grazing limit $\epsilon\rightarrow 0$, satisfies
	\[
	\begin{split}
		\|  f(t,v,\z) -  f^M_{\epsilon,N,\Delta v}(t,v,\z)  \|_{L^2(\V,L^2_p(I_\z))} \leq & \|B_{f_\epsilon}\|_{L^2_p(I_\z)}(\Delta v)^q + \dfrac{\|\sigma_{\Sv}\|_{L^2_p(I_\z)}}{N^{1/2}} \\
		& + \dfrac{C}{M^r} \bigg( \dfrac{1}{N} 	\sum_{i=1}^{N} \| \nabla \Sv(\xi_i) \|_{L^2_p(I_\z)} \bigg),
	\end{split}
	\]
	where $q>0$, $B_{f_\epsilon}>0$ is a constant, $C>0$ is a constant independent on $M$ and $\xi_i=(1-\theta)v_i+\theta v^M_i,\,\theta\in(0,1)$.
	\begin{proof}
		Thanks to the triangular inequality we have 
		\begin{equation*}
			\begin{split}
				\| f - f^M_{\epsilon,N,\Delta v} \|_{L^2(V,L^2_p(I_\z))}\quad&\leq\quad  \underbrace{  \| f - f_\epsilon \|_{L^2(V,L^2(I_\z))}   }_{I} \quad + \quad \underbrace{  \| f_\epsilon - f_{\epsilon,\Delta v}  \|_{L^2(V,L^2_p(I_\z))}   }_{II} \\
				& +\quad \underbrace{  \| f_{\epsilon,\Delta v} - f_{\epsilon,N,\Delta v} \|_{L^2(V,L^2_p(I_\z))}   }_{III} \quad + \quad \underbrace{  \|  f_{\epsilon,N,\Delta v} -  f^M_{\epsilon,N,\Delta v} \|_{L^2(V,L^2_p(I_\z))}   }_{IV} .
			\end{split}
		\end{equation*} 
	In the limit $\epsilon\rightarrow0$ we have shown that $f_\epsilon\rightarrow f$, so the first term vanishes. The second term represents the error introduced by the density reconstruction and is bounded by
	\[
	II \leq  \|B_{f_\epsilon}\|_{L^2_p(I_\z)}(\Delta v)^q,
	\]
	where $q>0$ depends on the accuracy of the reconstruction.
	For the last two terms, we observe that 
	\[
	f_{\epsilon,\Delta v} = \left\langle \varphi, f_\epsilon \right\rangle \qquad f_{\epsilon,N,\Delta v} = \left\langle \varphi, f_{\epsilon,N} \right\rangle \qquad f^M_{\epsilon,N,\Delta v} = \left\langle \varphi, f^M_{\epsilon,N} \right\rangle
	\]
	with $\varphi(\cdot)=\Sv(v - \cdot)$. Hence, we can apply the result of Theorem \ref{theorem:error_estimate} with the just mentioned choice for $\varphi$. 
	\end{proof}
\end{theorem}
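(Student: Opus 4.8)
\noindent\emph{Proof proposal.} The plan is to decompose the total error by the triangle inequality, inserting the intermediate quantities $f_\epsilon$, $f_{\epsilon,\Delta v}$ and $f_{\epsilon,N,\Delta v}$ between $f$ and $f^M_{\epsilon,N,\Delta v}$, so that
\[
\| f - f^M_{\epsilon,N,\Delta v} \| \le \underbrace{\| f - f_\epsilon \|}_{I} + \underbrace{\| f_\epsilon - f_{\epsilon,\Delta v} \|}_{II} + \underbrace{\| f_{\epsilon,\Delta v} - f_{\epsilon,N,\Delta v} \|}_{III} + \underbrace{\| f_{\epsilon,N,\Delta v} - f^M_{\epsilon,N,\Delta v} \|}_{IV},
\]
all norms being understood in $L^2(\V,L^2_p(I_\z))$. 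I would then bound each term separately: $I$ is the modelling error, $II$ the deterministic reconstruction (mollification) error, and $III$, $IV$ the statistical Monte Carlo error and the stochastic Galerkin projection error respectively. Term $I$ vanishes in the quasi-invariant limit, since $f_\epsilon \to f$ as $\epsilon \to 0$ in the sense of Section~\ref{subsec:FP}, possibly after passing to a subsequence as already remarked there.

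For term $II$, I would use that $f_{\epsilon,\Delta v}$ is the convolution of $f_\epsilon$ with the smoothing kernel $\Sv$. Combining the normalisation $\Delta v \int_\V \Sv(v)\,dv = 1$ with the vanishing-moment properties of $\Sv$ and enough $v$-regularity of $f_\epsilon$, a standard mollifier estimate gives, for each fixed $\z$, a bound of order $(\Delta v)^q$, with $q>0$ the order of accuracy of the reconstruction ($q = 1$ for the histogram choice $\Sv = \chi(|v| \le \Delta v/2)/\Delta v$, larger if $\Sv$ has more vanishing moments). Controlling the prefactor uniformly in $\z$ by a quantity $B_{f_\epsilon}$ built from a suitable Sobolev norm of $f_\epsilon$ and then taking the $L^2_p(I_\z)$ norm yields $II \le \| B_{f_\epsilon} \|_{L^2_p(I_\z)}(\Delta v)^q$.

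For terms $III$ and $IV$, the key remark is that $f_{\epsilon,\Delta v}$, $f_{\epsilon,N,\Delta v}$ and $f^M_{\epsilon,N,\Delta v}$ are exactly the expectations $\langle \varphi, f_\epsilon \rangle$, $\langle \varphi, f_{\epsilon,N} \rangle$ and $\langle \varphi, f^M_{\epsilon,N} \rangle$ against the test function $\varphi(\cdot) = \Sv(v - \cdot)$. Hence $III$ coincides with the term labelled $II$ in the proof of Theorem~\ref{theorem:error_estimate} and $IV$ with the term labelled $III$ there; applying that theorem with this particular $\varphi$ produces the Monte Carlo contribution $\| \sigma_{\Sv} \|_{L^2_p(I_\z)}/N^{1/2}$ and the spectral contribution $(C/M^r)\,\frac{1}{N}\sum_{i=1}^N \| \nabla \Sv(\xi_i) \|_{L^2_p(I_\z)}$, with $\xi_i = (1-\theta)v_i + \theta v^M_i$, $\theta \in (0,1)$. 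Summing the four bounds and recalling $I \to 0$ gives the claimed estimate.

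The hard part, I expect, is twofold: quantifying the constant $B_{f_\epsilon}$ in $II$, which requires genuine $v$-regularity of $f_\epsilon$ (or of its limit $f$), and verifying the hypotheses needed to invoke Theorem~\ref{theorem:error_estimate} with $\varphi = \Sv(v-\cdot)$, namely that $\Sv(v - v_i(\z,t)) \in H^r_p(I_\z)$ for every $i$. The latter depends both on the regularity-in-$\z$ of the particle trajectories $v_i(\z,t)$ --- for which the estimates of Section~\ref{subsec:reg} are the natural input --- and on the mollifier $\Sv$ being smooth enough to transfer that regularity; since the histogram reconstruction is only Lipschitz, for the spectral term to be meaningful one should either smooth $\Sv$ or read the bound in a correspondingly weaker sense.
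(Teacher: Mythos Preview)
Your proposal is correct and follows essentially the same route as the paper: the same four-term triangle-inequality decomposition, the same handling of each term (quasi-invariant limit for $I$, mollifier/reconstruction estimate for $II$, identification of $III$ and $IV$ as expectations against $\varphi(\cdot)=\Sv(v-\cdot)$ so that Theorem~\ref{theorem:error_estimate} applies). Your additional remarks on the regularity needed to make $B_{f_\epsilon}$ and the spectral term meaningful go beyond what the paper spells out, but they do not change the structure of the argument.
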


\section{Numerical results}
 \label{sec:numerics}
 
In this section, we present several numerical tests for the DSMC-sG scheme for the non-Maxwellian models with uncertainties described in Section \ref{sec:nonM_models}. In all the subsequent tests we will consider $N= 10^5$ agents and the densities are reconstructed through standard histograms. 

%In particular, we consider 1D uncertainty in the kinetic models for gambling and wealth distribution, and uncorrelated $2$D uncertainty in the traffic model, corresponding to $\z=(z_1,z_2)$ where $\z\sim p_1(z_1)p_2(z_2)$. In the first two cases we consider $\delta=\delta(\z)$ in the interaction kernel \eqref{eq:B_gamb}, in the last we take $\mu=\mu(z_1)$ in the interaction term defined in  \eqref{eq:Inter_traf} and $\alpha=\alpha(z_2)$ in the kernel defined in \eqref{eq:B_traf}. 

In more detail, we first check the consistency of the DSMC-sG approximation of the Boltzmann equation with the exact equilibrium distribution for the kinetic model for gambling. Then we test the convergence to the equilibrium of the Fokker-Planck equation for the discussed wealth distribution and the traffic models. 

In the binary interactions \eqref{eq:interactionDSMC} we introduced the indicator function $\chi(\cdot)$. This term may deteriorate the overall convergence of the DSMC-sG scheme. Coherently with the approach proposed in \cite{pareschi2020JCP}, we introduce the following regularisation 
%of the acceptance-rejection technique has been considered, and the indicator function has been replaced with a sigmoid $K(\beta(\cdot))$ dependent on a parameter $\beta>0$

%Then, we check for the convergence of the scheme. However, as already observed in \cite{pareschi2020JCP}, we essentially loose the spectral accuracy of the scheme due to the presence of the indicator function at the particle level in \eqref{eq:interactionDSMC}.
 
 %To overcome this problem, we follow the ideas in the aforementioned work, where a regularisation of the acceptance-rejection technique has been considered, and the indicator function has been replaced with a sigmoid $K(\beta(\cdot))$ dependent on a parameter $\beta>0$
\begin{equation}
	\label{eq:bin_int_sigmoid}
	\begin{split}
		& v_{i}'(\z,t) = v_{i} + K \left( \beta (\Sigma\xi - B(v_i,w_j,\z)) \right) ( \epsilon I_1(v_i,w_j,\z) + D_1(v_{i})\eta_\epsilon ) \\
		& w_{j}'(\z,t) = w_{j} + K \left( \beta (\Sigma\xi - B(v_i,w_j,\z)) \right) ( \epsilon I_2(v_i,w_j,\z) + D_2(w_{j})\eta_\epsilon ), 
	\end{split}
\end{equation}
where  $K(\beta(\cdot))$ is a sigmoid function dependent on the parameter $\beta>0$. In particular, we consider
\begin{equation}
	\label{eq:K}
	K \left( \beta (\Sigma\xi - B(v_i,w_j,\z)) \right)=\frac{1}{2}\left(1+\tanh\left(-\beta\left(\Sigma\xi-B(v_i,w_j,\z)\right)\right)  \right). 
\end{equation} 
 {With this choice, we note that $\beta\gg1$ is associated with a sharp sigmoid function, on the contrary, a smaller $\beta$ is linked to a smoother sigmoid. We will return to the influence of the parameter $\beta$ in the following.}

This regularisation induces a different evolution of the relevant observables. Consequently, to keep the exact time evolution of the first two moments together with the spectral convergence,  we couple the DSMC-sG scheme with a scaling process of the form 
\be \label{eq:rescaling}
\begin{split}
	& v_{i}''(\z,t)=\left(v_{i}'(\z,t) - V_{K}(t,\z)\right)\sqrt{\dfrac{E_{\textrm{FP}}(t,\z)}{E_{K}(t,\z)}} + V_{\textrm{FP}}(t,\z) \\
	& w_{j}''(\z,t)=\left(w_{j}'(\z,t) - V_{K}(t,\z)\right)\sqrt{\dfrac{E_{\textrm{FP}}(t,\z)}{E_{K}(t,\z)}} + V_{\textrm{FP}}(t,\z),
\end{split}
\ee
where $V_{\textrm{FP}}(t,\z)$ and $E_{\textrm{FP}}(t,\z)$ are, respectively, the mean velocity and the energy computed from the corresponding surrogate Fokker-Planck model. Similarly, we indicated with $V_{K}(t,\z)$ and $E_{K}(t,\z)$ mean velocity and energy of the Boltzmann-type model with sigmoid function $K(\cdot)$ \eqref{eq:K} in equation \eqref{eq:bin_int_sigmoid}. The computation of $V_{\textrm{FP}}(t,\z)$ and $E_{\textrm{FP}}(t,\z)$ follows from the model \eqref{eq:FP_general} that is solved through standard sG method and for which we can guarantee sufficient regularity under the assumptions of Theorem \ref{th:2}. We highlight how the additional scaling process \eqref{eq:rescaling} will be consistent with the original Boltzmann-type model in the regime $\epsilon \ll 1$.

For clarity purposes, in the rest of the section we will indicate the numerical solution of the Fokker-Planck model $f_{\textrm{FP}}(t,v,\z)$ and the numerical solution of the Boltzmann-type model as $f_{\epsilon}(t,v,\z)$, whereas the solution of the Boltzmann-type model with additional scaling process \eqref{eq:rescaling} will be denoted by $\tilde f_\epsilon(t,v,\z)$. For the approximation of the Fokker-Planck model we will implement a standard sG collocation method based on a semi-implicit scheme presented in \cite{Pareschi2018} and further studied in \cite{Dimarco2017,tosin2021MCRF,zanella2020MCS}. 

%In this second case We adopt the following hybrid scheme, presented here in the general form
%\begin{algorithm} \label{DSMC_sG_VHS_hybrid}
%	(Hybrid DSMC-sG).
%	
%	\begin{itemize}
%		\item fix $\epsilon = \Delta t \ll 1$; 
%		\item compute the initial gPC expansion $\{v_i^{M,0}$, $i = 1,\dots,N\}$, from the deterministic initial distribution $f_0(v)$;
%		\item for $n=0$ to $\nt-1$ in Algorithm \ref{DSMC_sG_VHS}:
%		\begin{itemize}
%			\item compute the solutions of the Fokker-Planck model $g^n(t,v,\z_h)$ in each collocation point $h = 0,\dots,M$; 
%			\item compute $V_\chi$, $E_\chi$ from the ensamble $g^n(t,v,\z)$; 
%			\item perform point $(2)$ of Algorithm \ref{DSMC_sG_VHS} with $\hat v_{i,h}^{n+1}$, $\hat w_{j,h}^{n+1}$ for the selected particles as in \eqref{eq:bin_int_sigmoid};
%			\item rescale the post-interaction velocities according to \eqref{eq:rescaling}.
%		\end{itemize}
%	\end{itemize}  
%\end{algorithm}

\subsection{Test 1: gambling} \label{subsec:test_gambling}

\begin{figure}
	\centering
	\includegraphics[width = 0.45\linewidth]{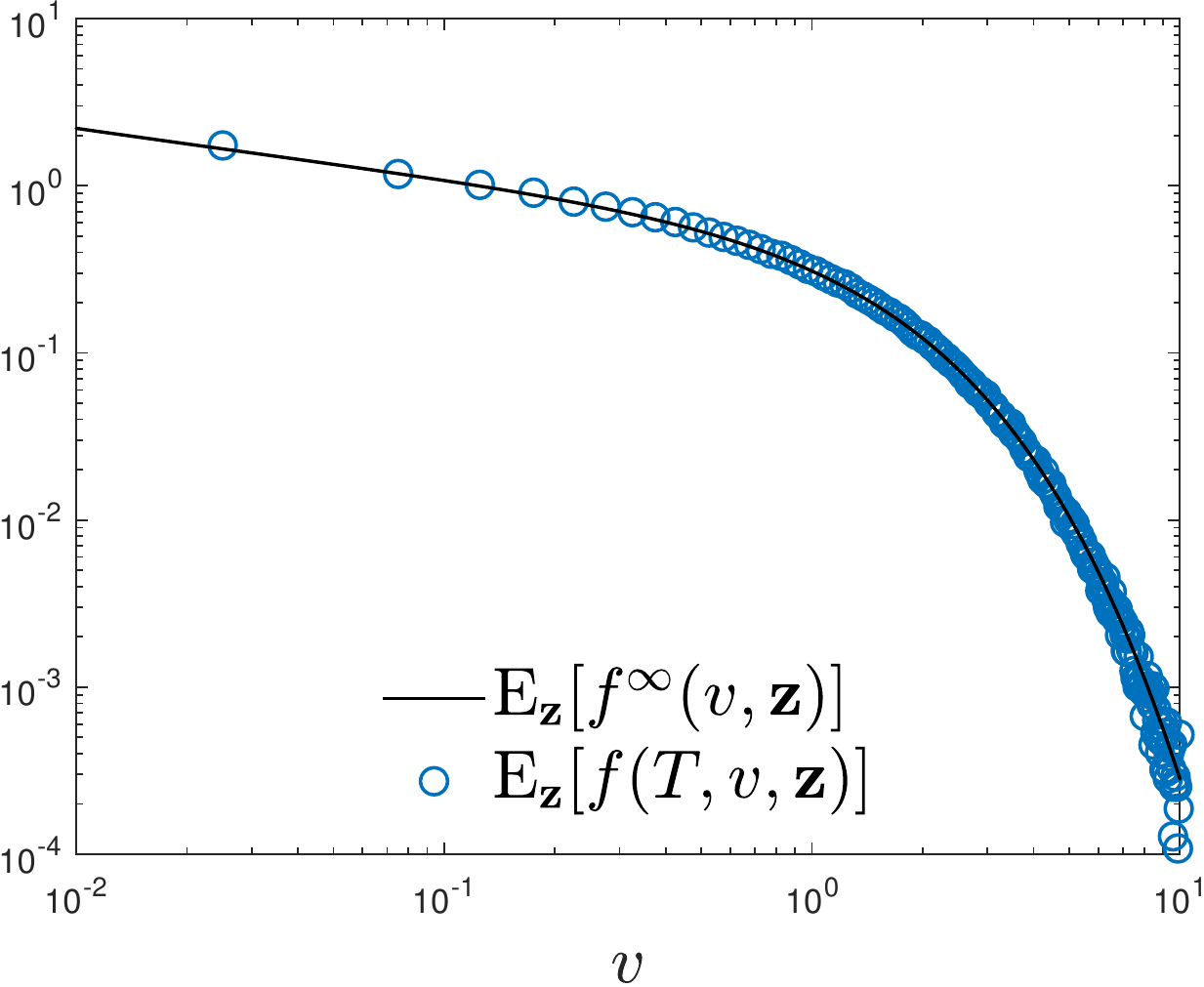}
	\includegraphics[width = 0.45\linewidth]{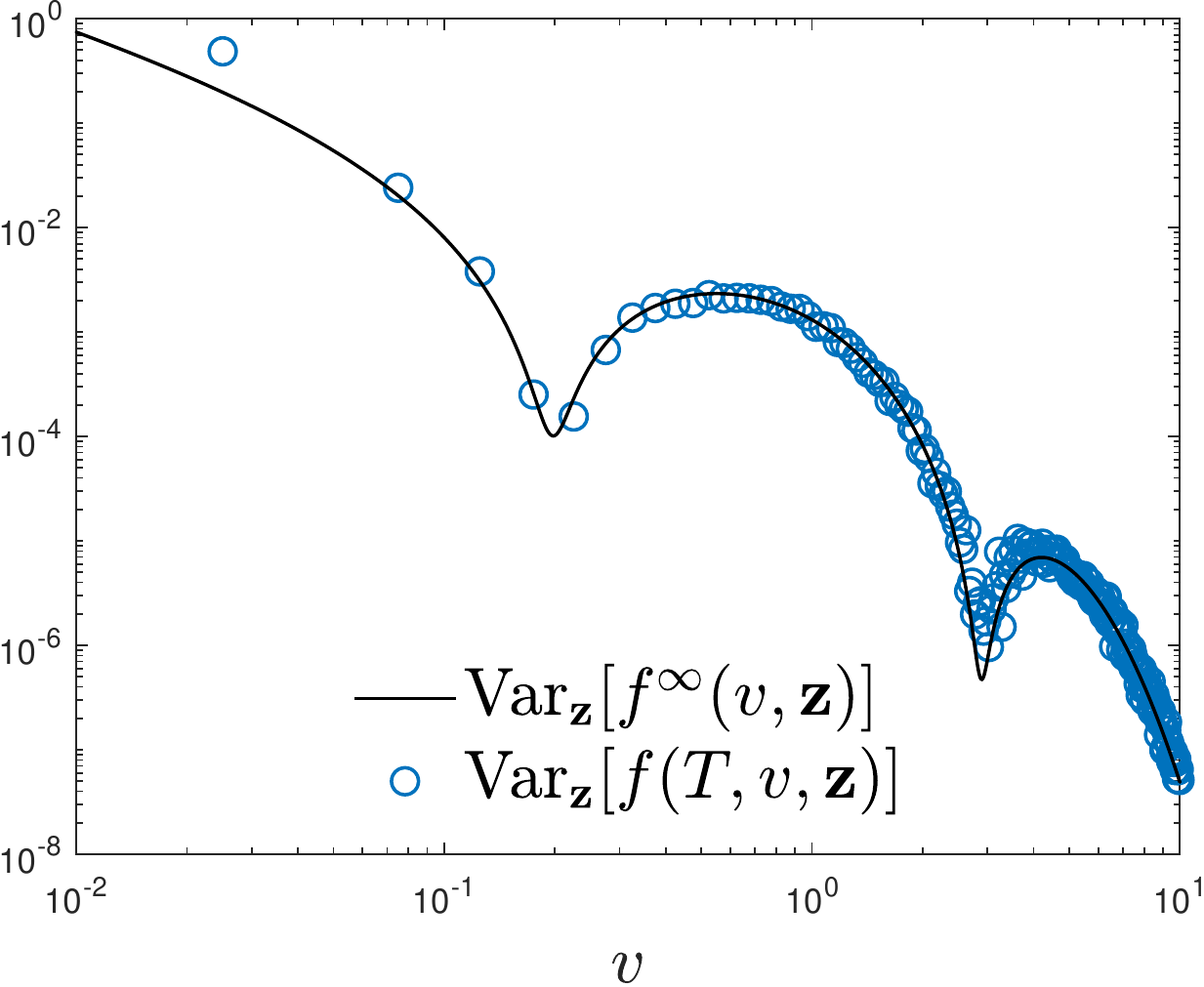}
	\caption{\textbf{Test 1}. Comparison of $f^\infty(v,\z)$ in \eqref{eq:finf_t1} and DSMC-sG approximation of $f(T,v,\z)$ at time $T=10$, in log-log scale, in terms of expectation (left) and variance (right) with respect to uncertain parameter $\delta(z) = \frac{1}{2}z$, $z\sim\mathcal{U}([0,1])$. We consider $N=10^5$ particles with $M=5$ Galerkin projections, time step $\Delta t = 0.1$ and initial density \eqref{eq:f0_t1_g}.}
	\label{fig:test1_gambling}
\end{figure}
We consider the kinetic model for gambling with 1D uncertainty in the collisional kernel. We choose $\delta(z) = \frac{1}{2}z$ and $z\sim\mathcal{U}([0,1])$ in \eqref{eq:B_gamb} fixing $\kappa = 1$. %, $\omega\sim\mathcal{U}([0,1])$ in the binary interaction. 
Since the random parameter is uniformly distributed, we use the Legendre polynomials in the gPC expansion. In all the simulations, we consider $M=5$ Galerkin projections and the time frame $[0,T]$ with $T = 10$ discretised with time step $\Delta t = 0.1$. The kinetic density is reconstructed in the interval $[0,10]$ with $\Delta v=0.05$. We consider the deterministic initial distribution
\begin{equation}
\label{eq:f0_t1_g}
f(0,v,\z)=
\begin{cases}
\dfrac{1}{2} & v \in [0,2] \\
0                 & \textrm{elsewhere}.  
\end{cases}
\end{equation}
In this test, we highlight that  the equilibrium solution of the Boltzmann-type model can be computed exactly and $\epsilon = 1$ and reads
\begin{equation}
\label{eq:finf_t1}
f^\infty(v,\z) = \dfrac{(1-\delta)^{1-\delta}}{\Gamma(1-\delta)}v^{-\delta} \exp\{-(1-\delta)v\},
\end{equation}
see Section \ref{subsec:gambling}. 
In Figure \ref{fig:test1_gambling}, we report expected value and variance with respect to the  random parameter $z$ of the DSMC-sG approximation. We may observe the good agreement of the considered quantities of interest with the analytical ones. 

\subsection{Test 2: wealth distribution}
We consider now the kinetic model for the wealth distribution described in Section \ref{subsec:wealth}. In particular, we consider the case where the interaction kernel \eqref{eq:kernel_wealth} is characterized by  $\delta(\z) = \z\sim\mathcal{U}([0,1])$ and $\kappa = 1$. Therefore, we adopt the Legendre polynomials in the gPC expansion of the velocities. In all the results of this test, we consider a background uniformly distributed as $\mathcal{E}\sim\mathcal{U}([0.9, 1.1])$. Furthermore, we consider the following deterministic initial distribution
\begin{equation}
\label{eq:f0_t1}
f(0,v,\z)=
\begin{cases}
\dfrac{1}{2} & v \in [0,2] \\
0                 & \textrm{elsewhere}.  
\end{cases}
\end{equation}
In Figure \ref{fig:test1_wealth} we show expectation and variance of $f_\epsilon(t,v,\z)$ computed through DSMC-sG method with respect to the analytical equilibrium distribution of the Fokker-Planck model \eqref{eq:equilibrium_wealth}, for various $\epsilon  = 5 \times 10^{-2}, 10^{-1}, 5 \times 10^{-1}$. In the last picture we report also the behavior of the expected mean wealth $\mathbb E_{\z}[V_\epsilon(t,\z)]$ for the introduced values of $\epsilon$. We consider $M=5$ Galerkin projections, $\Delta t=\epsilon/10$, $\lambda=\sigma^2=0.5$ and time frame $[0,T]$ with $T = 10$. The kinetic density is reconstructed through standard histogram over the interval $[0,10]$ with $\Delta v = 0.05$. 

%In the top left panel of Figure \ref{fig:test1_wealth}, we may observe the accordance between the expectations of the exact steady state of the Fokker Planck equation \eqref{eq:equilibrium_wealth} and the DSMC-sG approximation of $f_{\epsilon}(T,v,\z)$, for decreasing values of the Knudsen number $\epsilon$ and fixed time $T=10$. In the bottom centred panel, we show the time evolution of the first order moment $V_{\epsilon}(t,\z)=\int_V v f_{\epsilon}(t,v,\z)dv$ together with the time evolution of the first order moment $V(t,\z)$ of $f(v,t,\z)$, given by the numerical solution of equation \eqref{eq:FP_wealth} \textcolor{red}{se ne parla in dettaglio dopo, accennare già qui?}. 

%Since there is a good agreement, in the following, we fix $\epsilon=0.1$. In the top right panel of Figure \ref{fig:test1_wealth}, we show the time evolution of $\mathbb{E}_\z[f_{\epsilon}(t^n,v,\z)]$ at fixed times $t^n=n,\,n=0,\dots,10$.
\begin{figure}
	\centering
	\includegraphics[width = 0.31\linewidth]{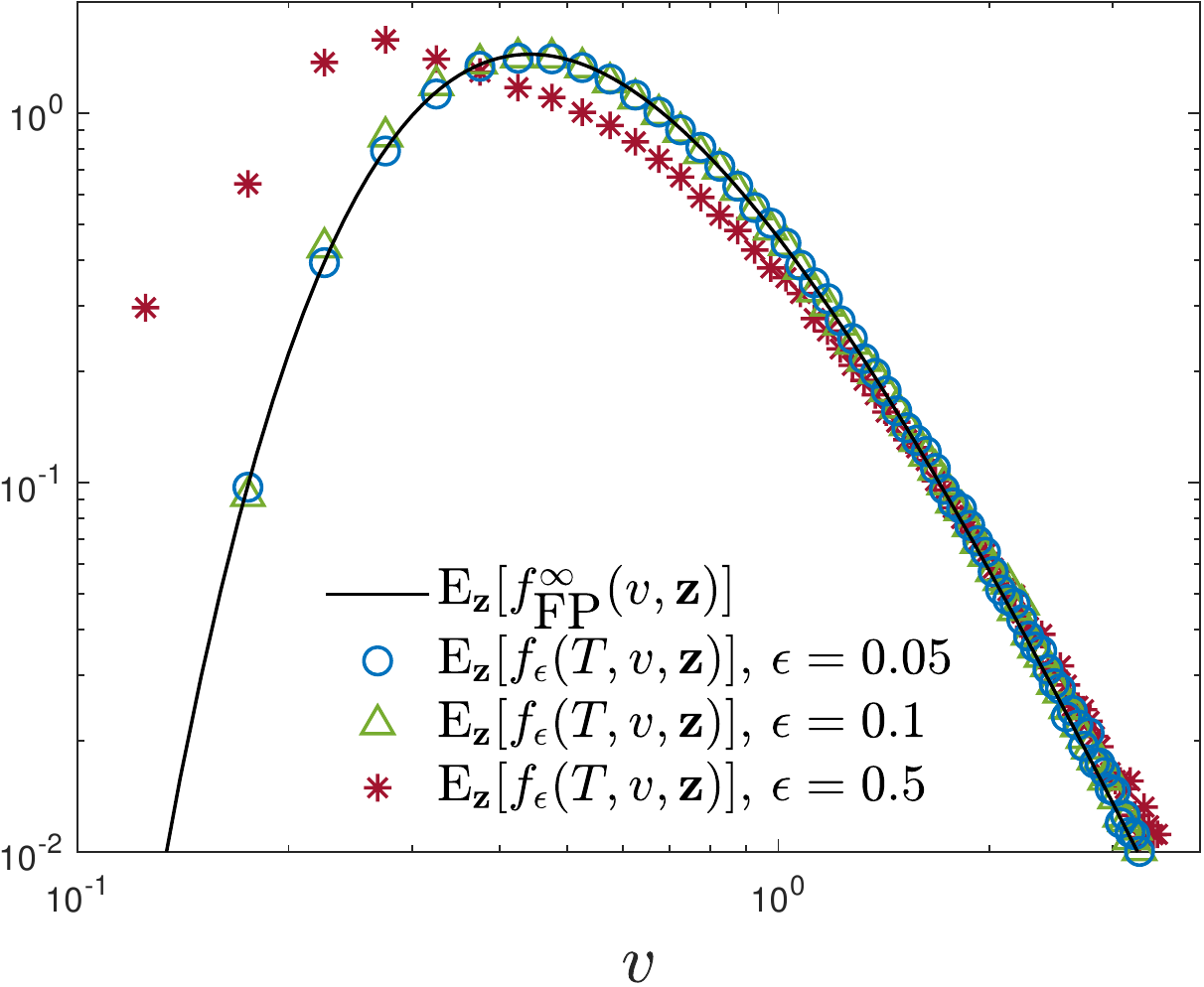}
	\includegraphics[width = 0.31\linewidth]{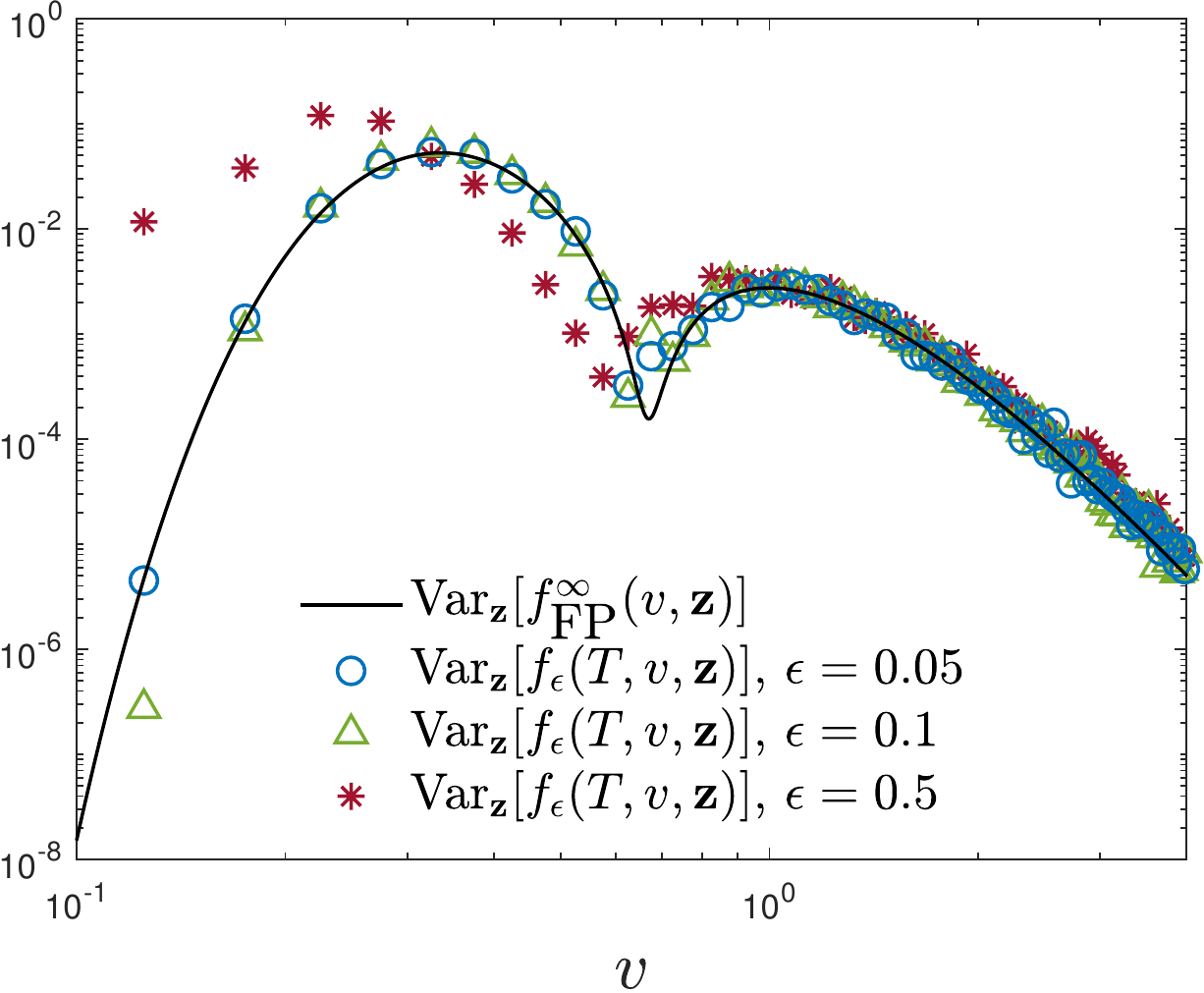}
	\includegraphics[width = 0.31\linewidth]{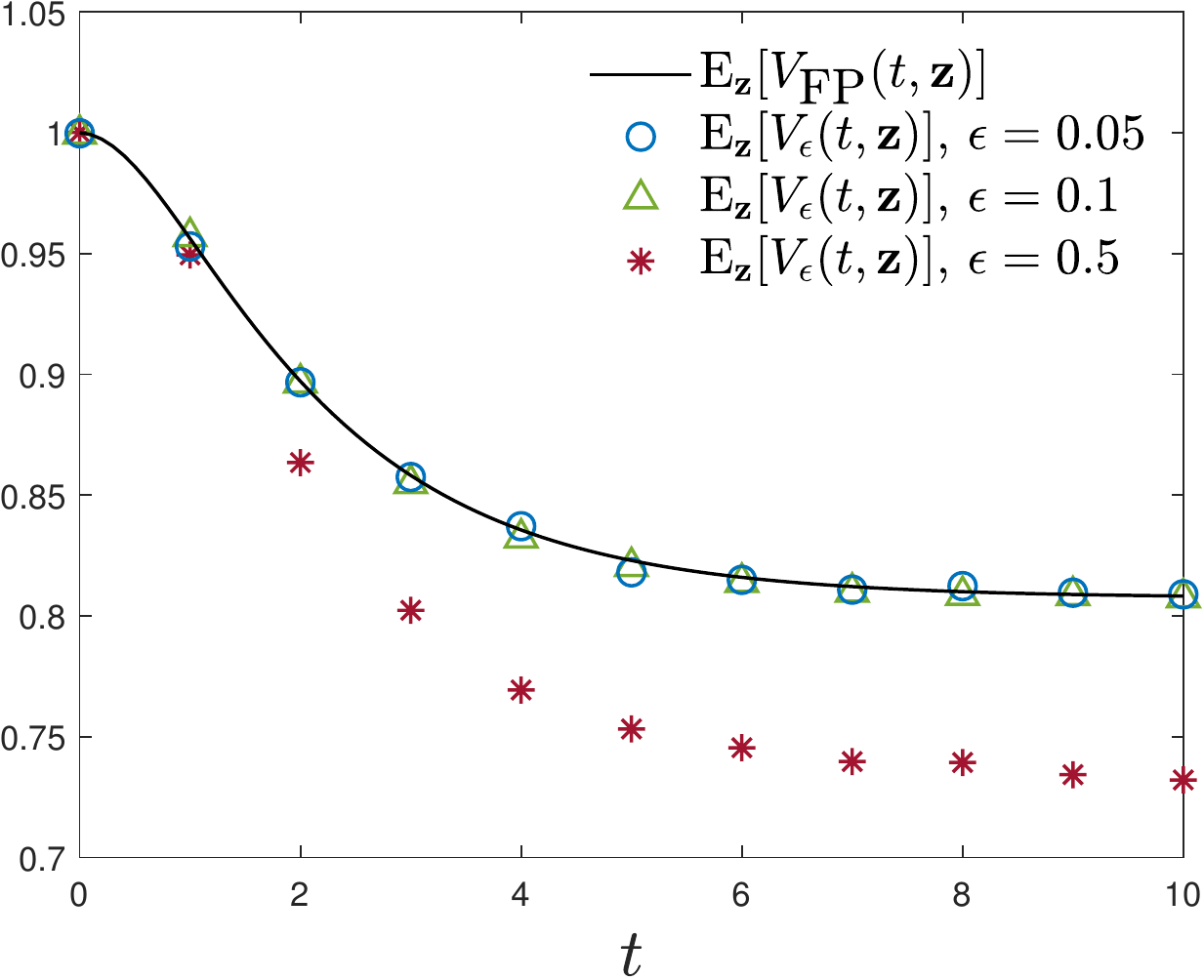}
	\caption{\textbf{Test 2}. Comparison of $f^\infty_{\textrm{FP}}(v,\z)$ and the DSMC-sG approximation of $f_\epsilon(T,v,\z)$ at time $T = 10$, in log-log scale, in terms of expectation (left) and variance (center) for several $\epsilon = 5 \times 10^{-2}, 10^{-1}, 5 \times 10^{-1}$. We report (right) the time evolution of $\mathbb E_\z[V_\epsilon(t,\z)]$ for several $\epsilon$ and $\mathbb E_{\z}[V_{\textrm{FP}}(t,\z)]$. We consider $N=10^5$ particles with $M=5$ Galerkin projections and $\Delta t = \epsilon/10$. The random parameter is uniformly distributed $\delta(\z) = \z\sim\mathcal{U}([0,1])$, and we fix $\kappa=1$, $\lambda=\sigma^2=0.5$. The background $\mathcal{E}$ is a uniform distribution $\mathcal{U}([0.9, 1.1])$.}
	\label{fig:test1_wealth}
\end{figure}

In order to show spectral convergence property of the scheme, we consider a reference DSMC-sG evolution of $E_\epsilon(t,\z)$ obtained with $\epsilon = 0.1$, $N = 10^5$, $\Delta t = 0.1$ and sG scheme up to order $M = 50$. We store the collisional tree generating the reference solution and we check the $L^2$ convergence of the scheme. In Figure \eqref{fig:test2_wealth} we present the decay of the $L^2$ error for increasing $M$ obtained from the initial distribution \eqref{eq:f0_t1}. If we consider the original binary dynamics \eqref{eq:interactionDSMC}, even if the expectation is well described, it can be observed that the spectral accuracy of the method is lost due to discontinuity of the indicator function $\chi (\cdot)$. The same test performed for the binary dynamics \eqref{eq:bin_int_sigmoid} recovers spectral accuracy. For increasing $\beta\gg 0$ the convergence of the scheme is deteriorated, since we approximate a step function. 
%Anyway, the resulting approximation is not coherent with the original model in terms of the main physical quantities. 
%Then we check for the convergence of the scheme in the space of the random parameter, looking at the $L^2$ error in the evaluation of the second order moment. Adopting the same choices and approach of the previous section, we may observe from Figure \ref{fig:test2_wealth} that the indicator function deteriorates the convergence (left panel) and the regularisation allows us to recover the spectral convergence for decreasing values of $\beta$ (right panel).
\begin{figure}
	\centering
	\includegraphics[width = 0.45\linewidth]{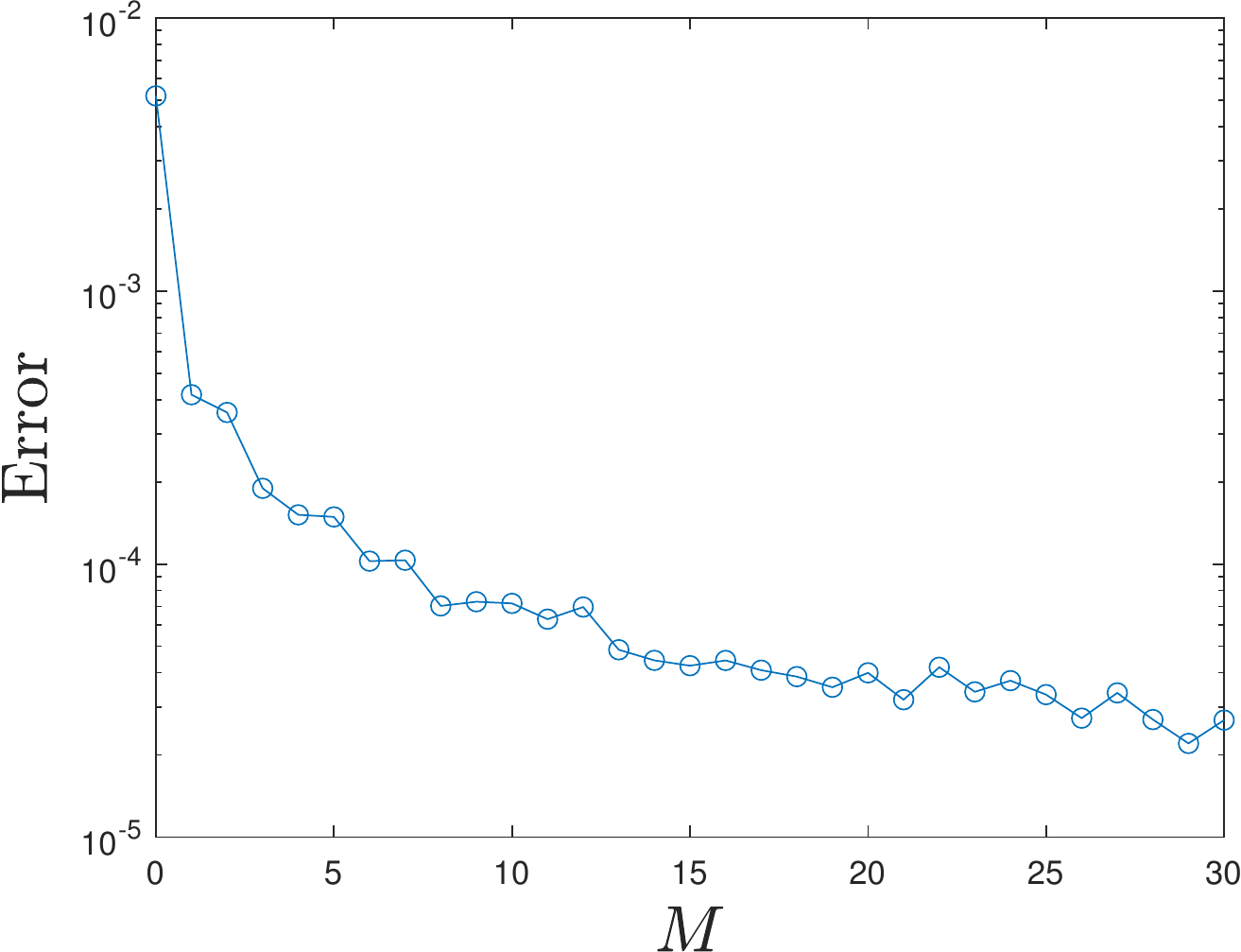}
	\includegraphics[width = 0.45\linewidth]{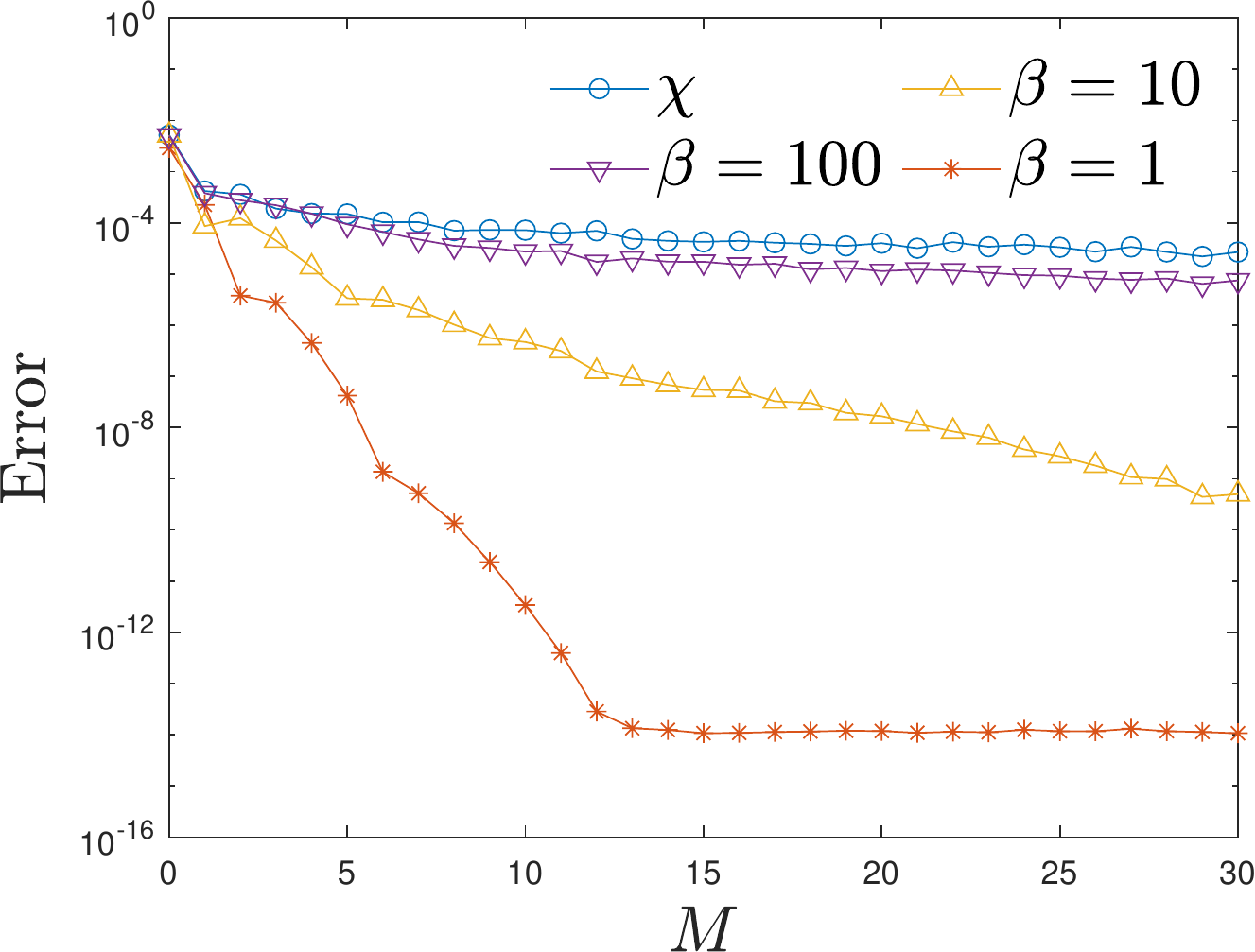}
	\caption{\textbf{Test 2}. Convergence of the $L^2$ error of the DSMC-sG scheme, where the binary interaction dynamics are given by \eqref{eq:interactionDSMC} (left) or by \eqref{eq:bin_int_sigmoid} (right), in the case of model for wealth distribution with uncertain kernel. We consider $N=10^5$, $\Delta t = 0.01$ and $\epsilon=0.1$. We fix $\kappa=1$, $\lambda=\sigma^2=0.5$. Reference solution computed with $M= 50$.}
	\label{fig:test2_wealth}
\end{figure}

Coupling now \eqref{eq:bin_int_sigmoid} with the process \eqref{eq:rescaling}, we recover qualitatively consistent approximation of the evolution of relevant quantities of interest together with spectral convergence for moderate values of $\beta >0$, see Figure \ref{fig:test3_wealth}. In this test we solve the Fokker-Planck model \eqref{eq:FP_wealth}. 

\begin{figure}
	\centering
	\includegraphics[width = 0.31\linewidth]{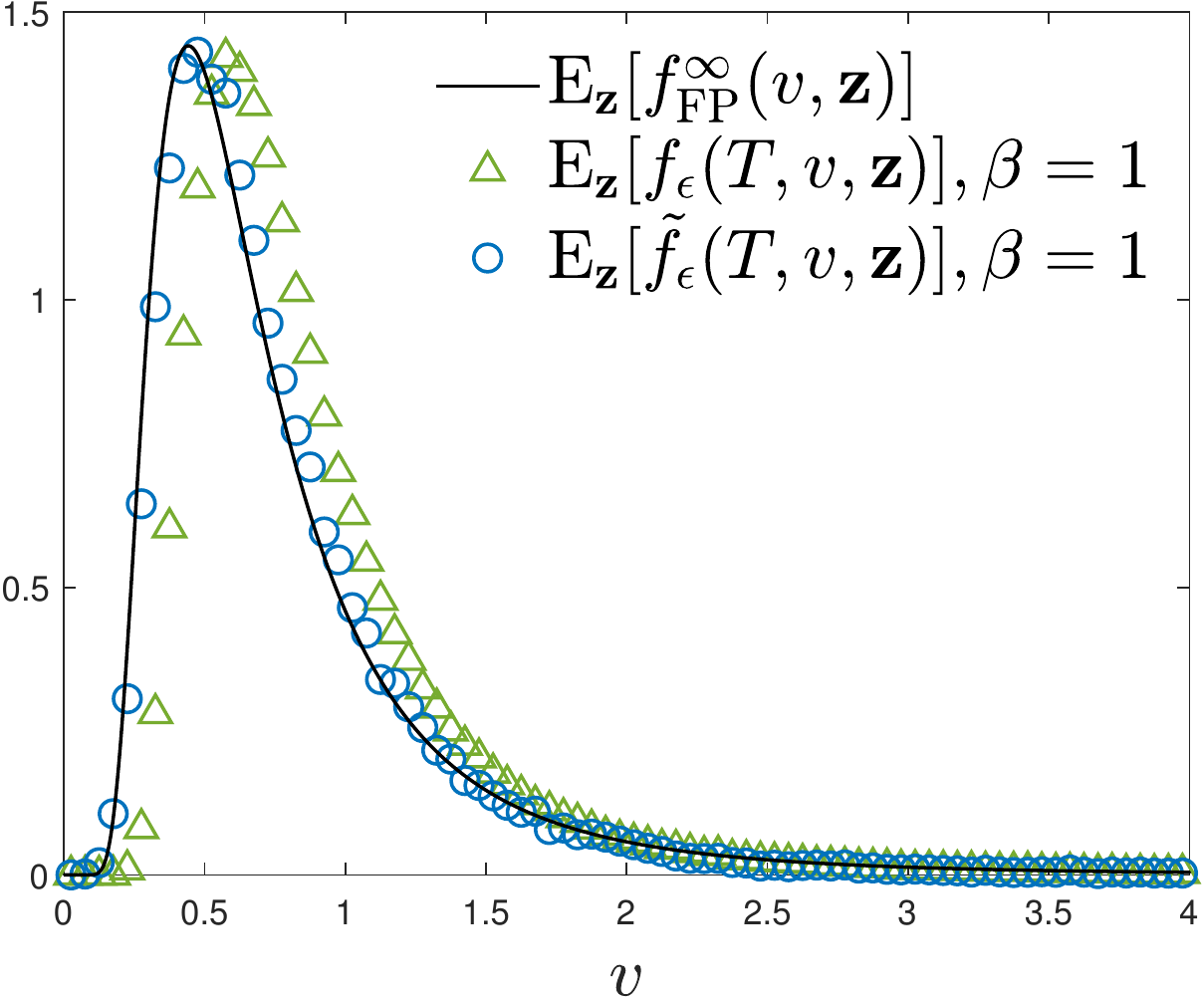}
	\includegraphics[width = 0.31\linewidth]{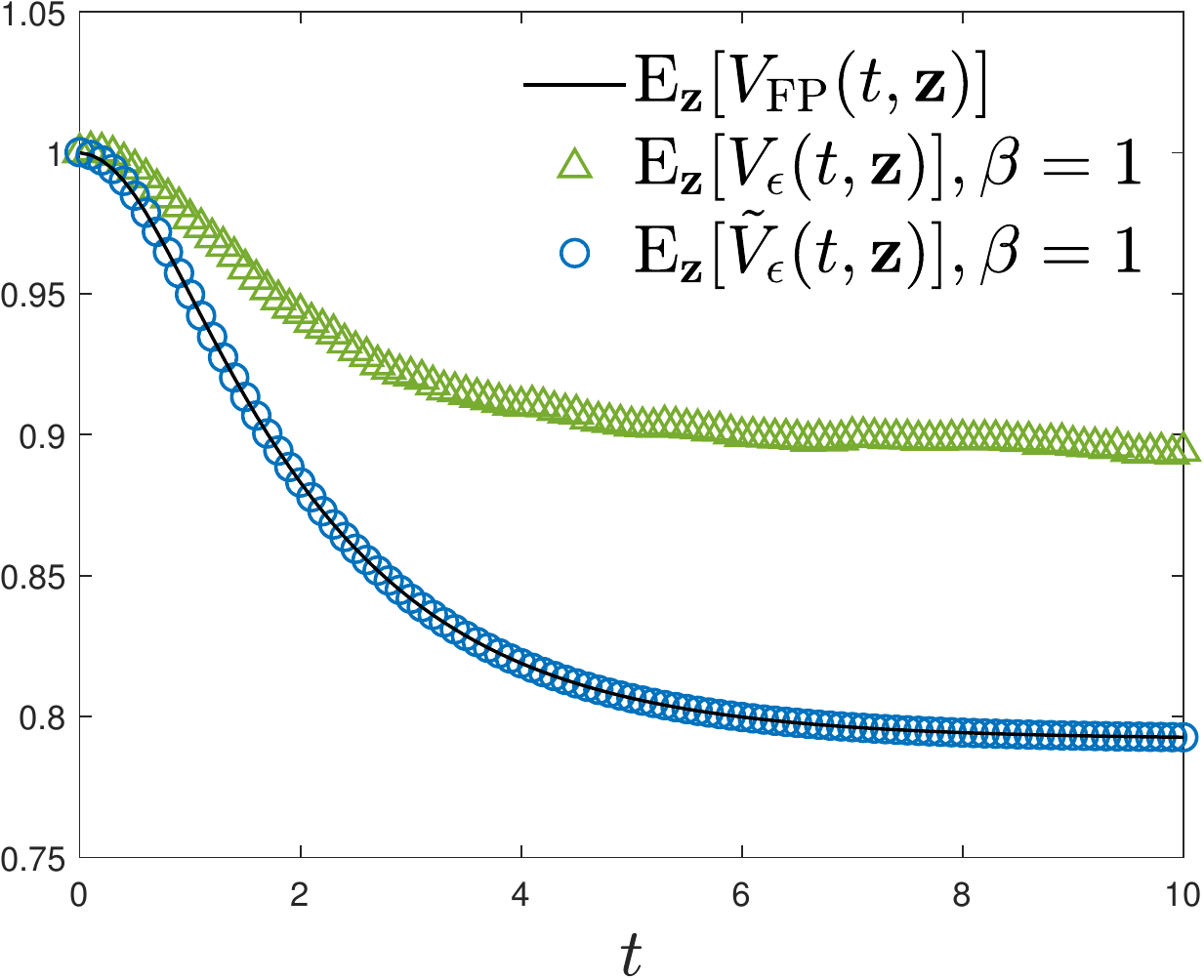}
	\includegraphics[width = 0.31\linewidth]{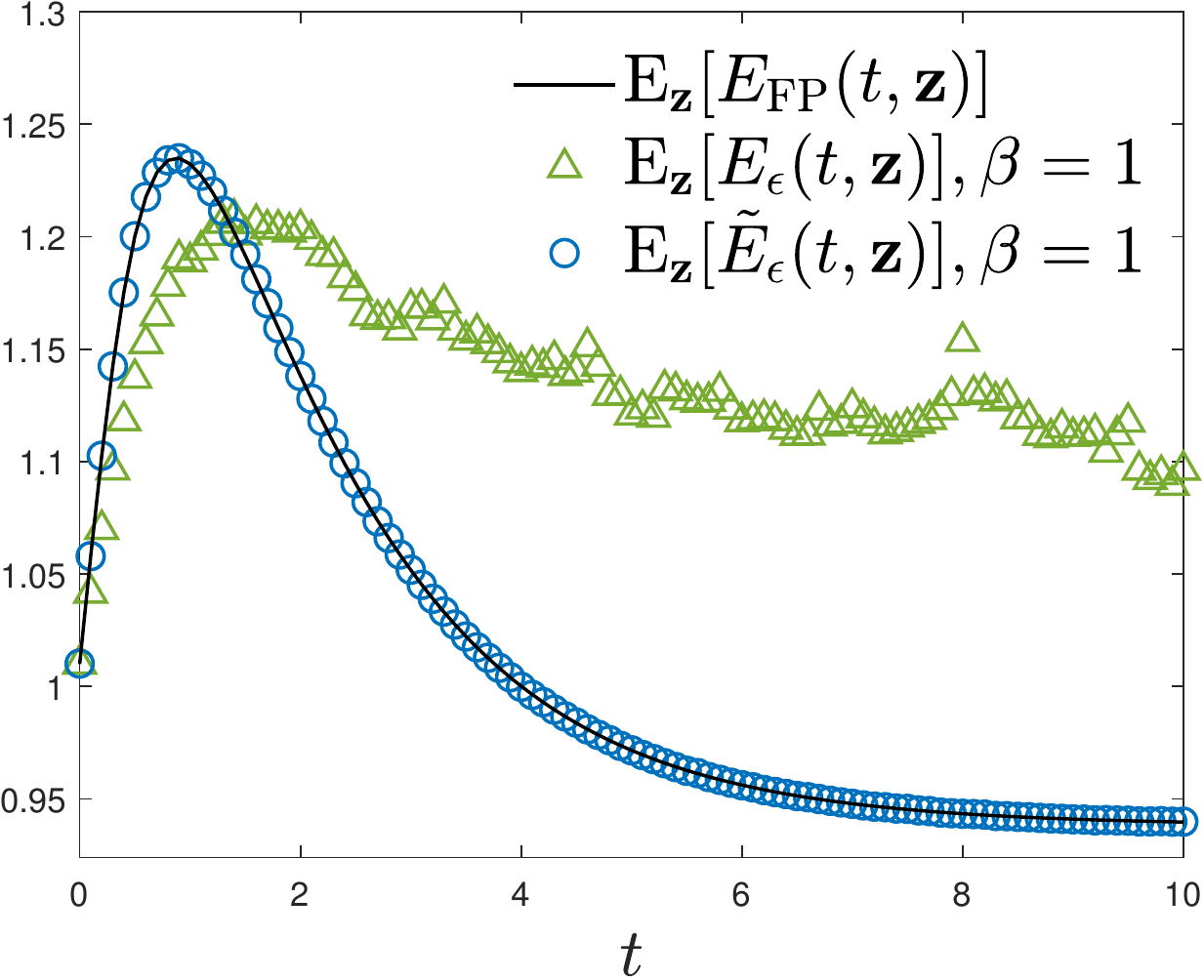}
	\caption{\textbf{Test 2}. Left: comparison of $f^\infty_{\textrm{FP}}$ in \eqref{eq:equilibrium_wealth} with the DSMC-sG approximation of $f_\epsilon(t,v,\z)$ (regularization without rescaling) and of $\tilde{f}_\epsilon(t,v,\z)$ (regularization with rescaling), in terms of expectation in $\z$.  Center and right: comparison of $V_{\textrm{FP}}, E_{\textrm{FP}}$ with the DSMC-sG approximation of $V_\epsilon(t,\z), E_\epsilon(t,\z)$ (regularization without rescaling) and of $\tilde{V}_\epsilon(t,\z),\tilde{E}_\epsilon(t,\z)$ (regularization with rescaling). We consider $N=10^5$, $M = 5$, $\Delta t = 0.01$ and $\epsilon=0.1$. We fix $\kappa=1$, $\lambda=\sigma^2=0.5$ }
	\label{fig:test3_wealth}
\end{figure}

%In the left panels of Figure \ref{fig:test3_wealth}, we display the expectation of the distribution function and the time evolution of the expectations of the mean velocity and energy with the regularisation, without the rescaling process, for $\beta=1$. As we expected, the results deviate from their original dynamics. In the right panels of Figure \ref{fig:test3_wealth}, we show the same quantities with the scaling process introduced in Algorithm \ref{DSMC_sG_VHS_hybrid}. We note that in these cases the results are in agreement with the dynamics derived from the Fokker-Planck equation, coherent with the Boltzmann-type one in the quasi invariant interaction limit. In addition to this, the regularised and rescaled model with $\beta=1$ exhibits spectral convergence, as observed before.

\subsection{Test 3: traffic flow}
In this last test, we consider the traffic model of Section \ref{subsec:traffic}, affected by an uncorrelated 2D random parameter $\z = (z_1,z_2)$ with $p(\z) = p_1(z_1)p_2(z_2)$. In particular, we consider $z_1$ affecting $\mu(z_1)$  in the interaction function $I(v,w,z_1)$ defined in \eqref{eq:Inter_traf} and $z_2$ affecting $\alpha(z_2)$ in the kernel $B(|v-w|,z_2)$ defined in \eqref{eq:B_traf}. 
Under these assumptions, the gPC expansion of the velocities $v_{i}(t,z_1,z_2)$, $i = 1,\dots,N$ reads
\begin{equation}
	v_{i}^{M_1,M_2}(z_1,z_2,t)=\sum_{h=0}^{M_1} \sum_{k=0}^{M_2}\hat{v}_{i,h,k}(t)\Phi^{(1)}_{h}(z_{1})\Phi^{(2)}_{k}(z_{2}),
\end{equation}
being $\{\Phi^{(1)}_{h}(z_{1})\}_{h=0}^{M_1}$ and $\{\Phi^{(2)}_{k}(z_{2})\}_{k=0}^{M_2}$ the polynomials orthogonal with respect to the distributions $p_1(z_1)$ and $p_2(z_2)$, respectively. Substituting $v_{i}^{M_1,M_2}(z_1,z_2,t)$ into the binary interaction \eqref{eq:bin_int_traf} and and proceeding as in Section \ref{subsec:DSMC}, we obtain 
\begin{equation}
	\begin{split}
		&\hat{v}_{i,h,k}'(t)=\hat{v}_{i,h,k}(t)+\hat{V}^{h,k}_{i,j} \\
		&\hat{w}_{j,h,k}'(t)=\hat{w}_{j,h,k}(t),
	\end{split}
\end{equation}
with the following collision matrix
\begin{equation}
	\begin{split}
		\hat{V}^{h,k}_{i,j}=&\int_{I_\z} \chi\left(\Sigma\xi<B(v_{i}^{M_1,M_2},w_{j}^{M_1,M_2},z_{2})\right)\\
		&\left(\gamma I(v_{i}^{M_1,M_2},w_{j}^{M_1,M_2},z_{1}) + D(v_{i}^{M_1,M_2};\rho)\eta\right)\Phi^{(1)}_{h}(z_{1})\Phi^{(2)}_{k}(z_{2})p_{1}(z_{1})p_{2}(z_{2})dz_{1}dz_{2}.
	\end{split}
\end{equation}   

We consider the following deterministic initial distribution
\begin{equation}
\label{eq:f0_t3}
f(0,v,\z)=
\begin{cases}
1 & v \in [0,1] \\
0                 & \textrm{elsewhere}.  
\end{cases}
\end{equation}

To assess the impact of the single uncertain parameters on the dynamics we first consider the case $\mu(z_1) = 1 + 2z_1$ and $\alpha(z_2) = 2z_2 $ with uncorrelated uncertainties  $z_1,z_2\sim \mathcal U([0,1])$. In Figure \ref{fig:2D_traffic} we show the DSMC-sG approximation of the solution of Fokker-Planck model for traffic \eqref{FP_nocontrol} in terms of expected value and variance in $\z = (z_1,z_2)$ of the distribution function and of the macroscopic quantities. We considered two different densities $\rho = 0.4$ and $\rho = 0.6$ and the Fokker-Planck is solved on a grid of $N_v = 51$ points such that $\Delta v = 0.02$ and $\Delta t = \Delta v/2$. As before, the DSMC-sG provides a good approximation in the limit $\epsilon \ll 1$ of the solution of the Fokker-Planck model. 

We show the $L^2$ convergence of the DSMC-sG scheme in Figure \ref{fig:2D_conv}. In details, we considered the case with binary interactions  \eqref{eq:interactionDSMC} in the left plot, whereas the case with regularization of the step function as in \eqref{eq:bin_int_sigmoid}, with $\beta = 0.01$, is presented in the right plot. The error has been computed with respect to a reference DSMC-sG evolution of $E_\epsilon(t,\z)$ with $\Delta t = \epsilon = 0.1$, $N = 10^5$, and $M_1 = M_2 = 50$. As before, in this test we store the collisional tree of the reference solution and we check $L^2$ convergence for increasing $M_1,M_2$. The error is presented here in $\log_{10}$ and we can clearly observe spectral accuracy in the case with regularization.   

\begin{figure}
	\centering
	\includegraphics[width = 0.31\linewidth]{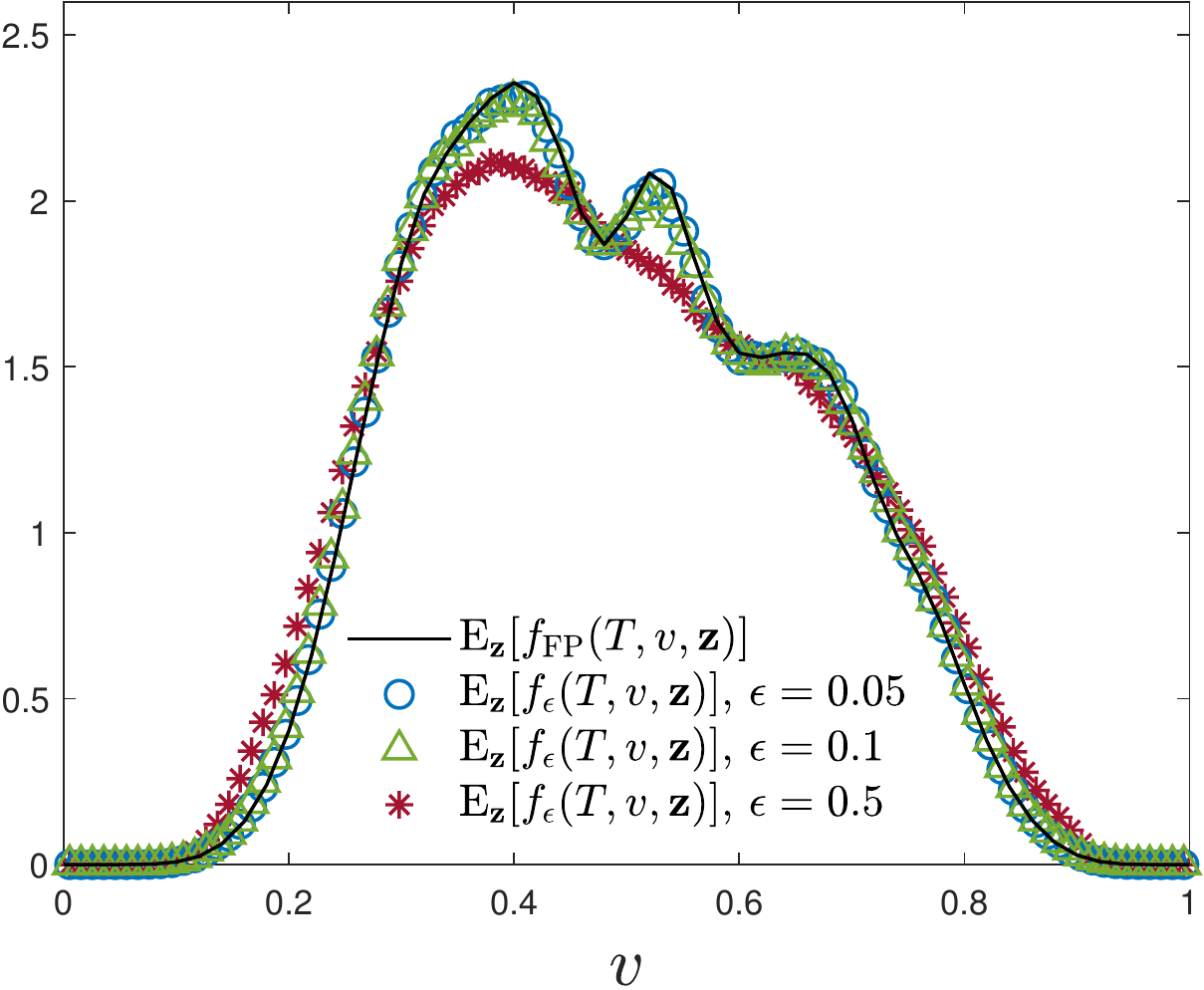}
	\includegraphics[width = 0.31\linewidth]{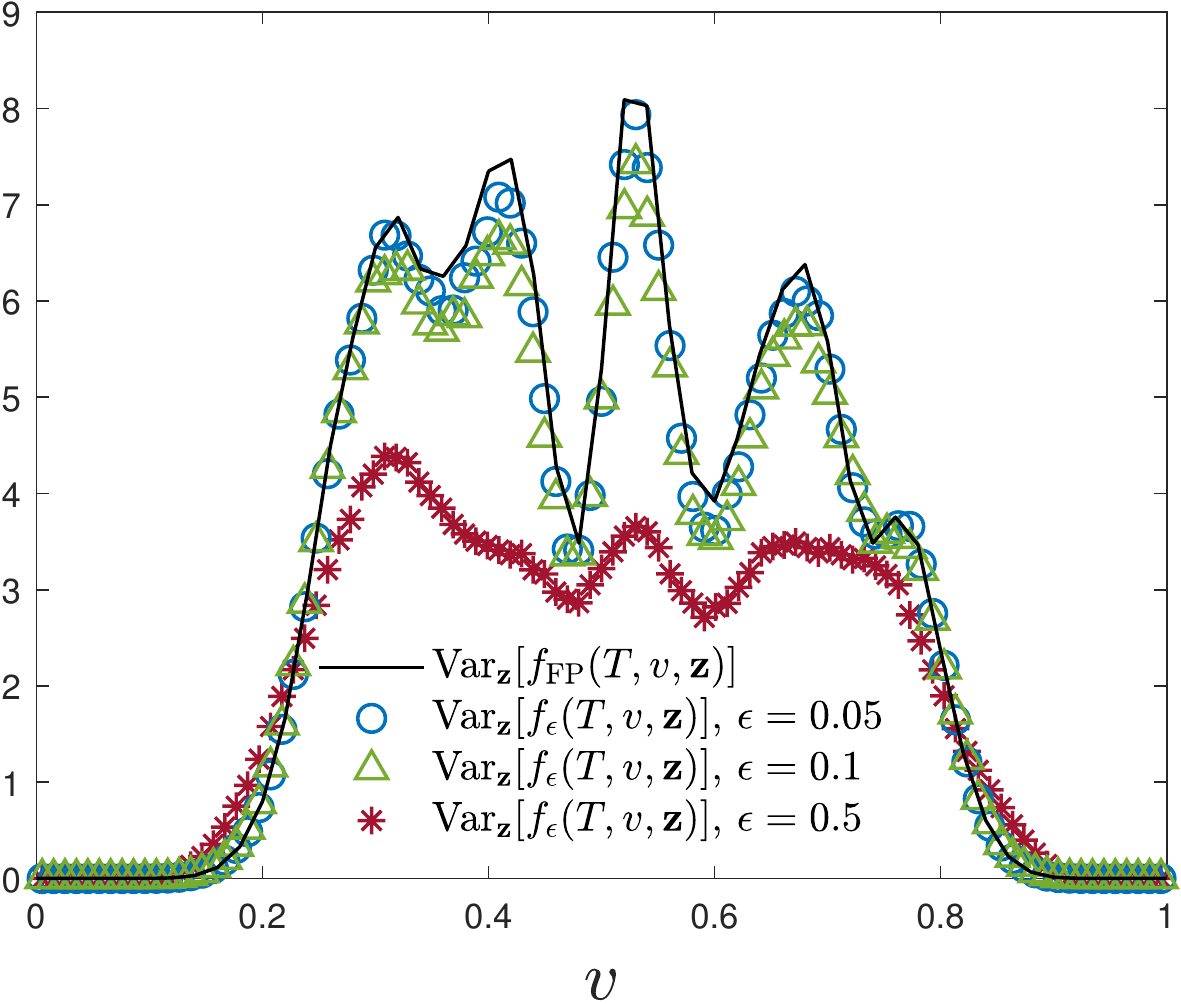}
	\includegraphics[width = 0.31\linewidth]{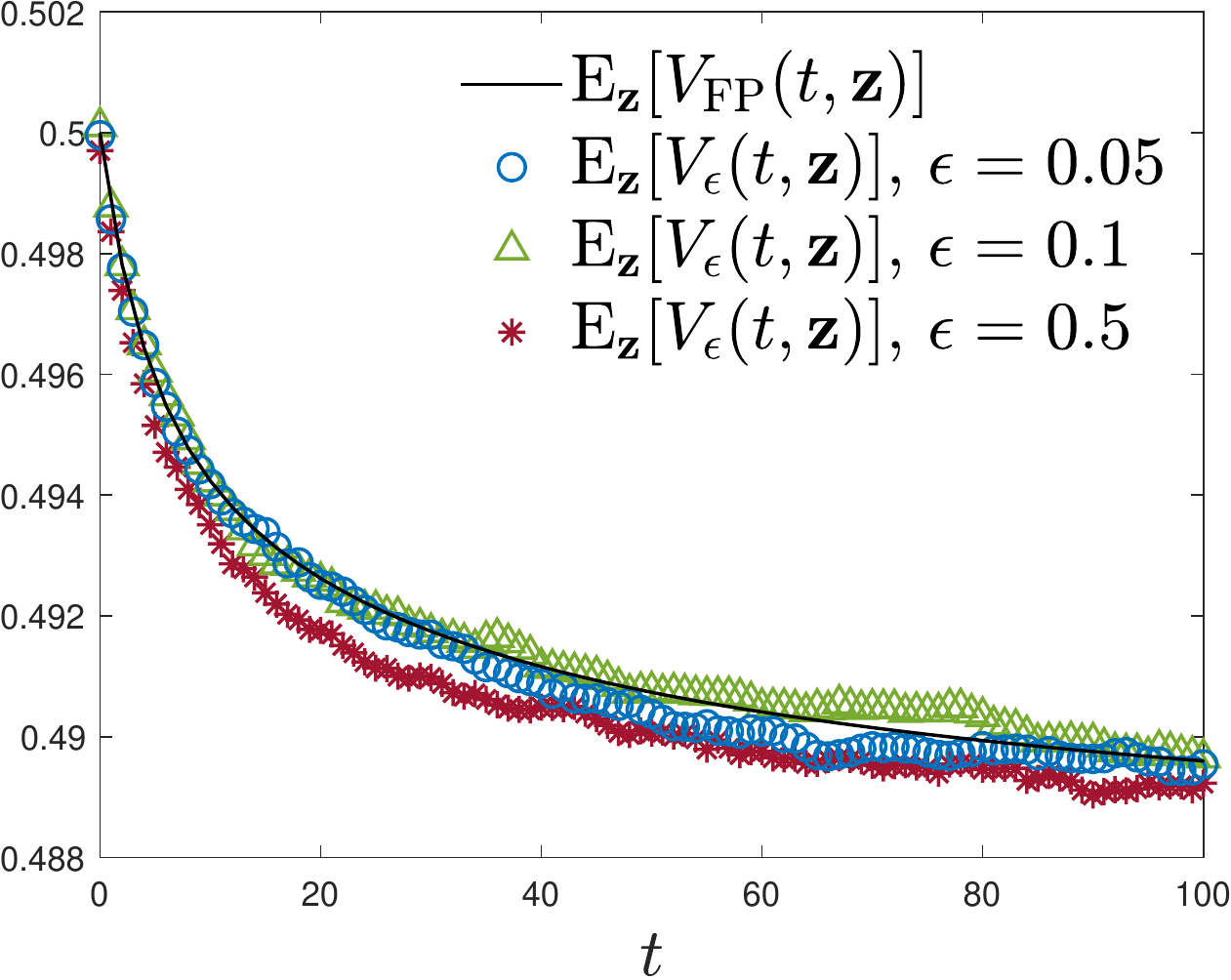} 
	\includegraphics[width = 0.31\linewidth]{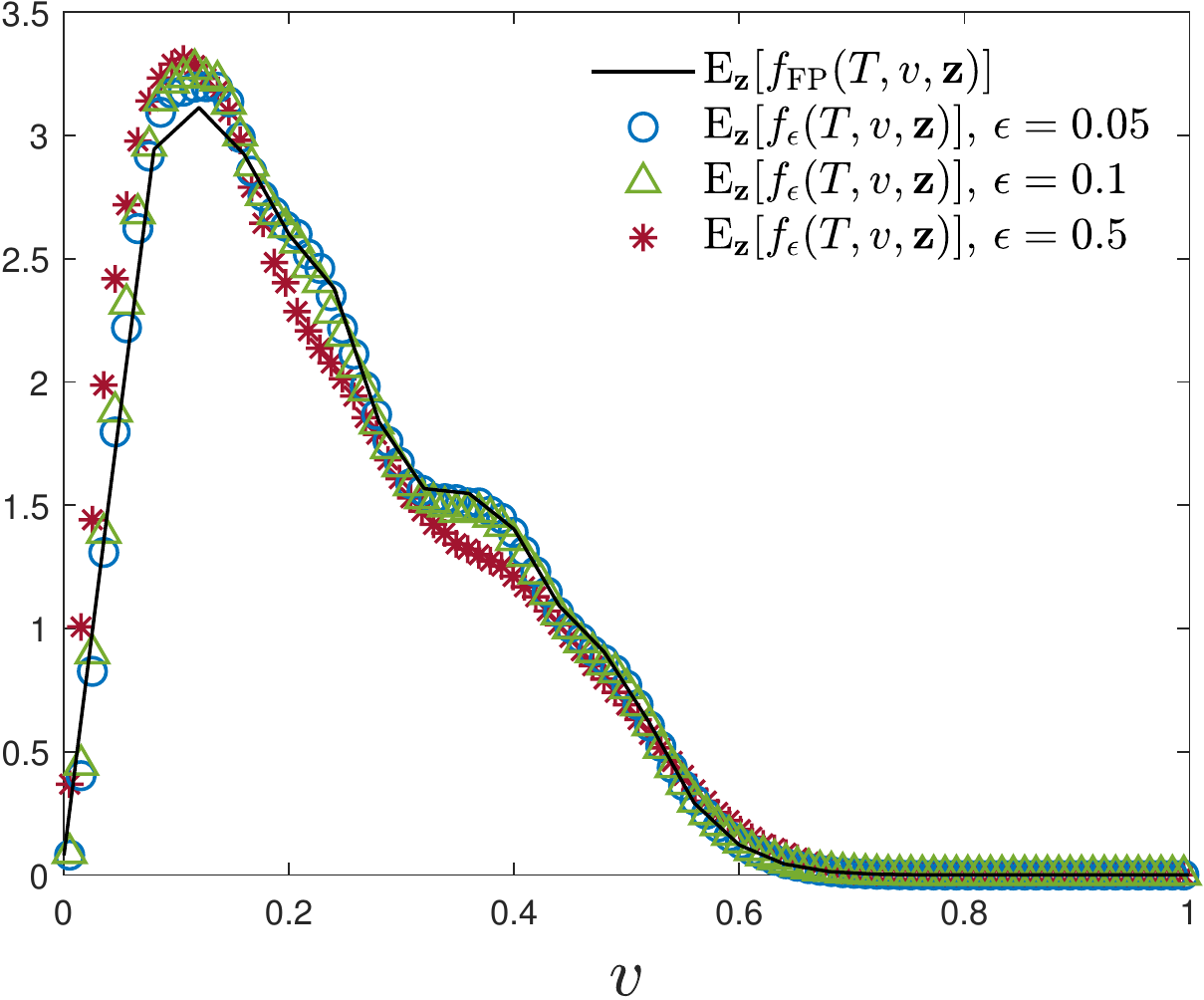}
	\includegraphics[width = 0.31\linewidth]{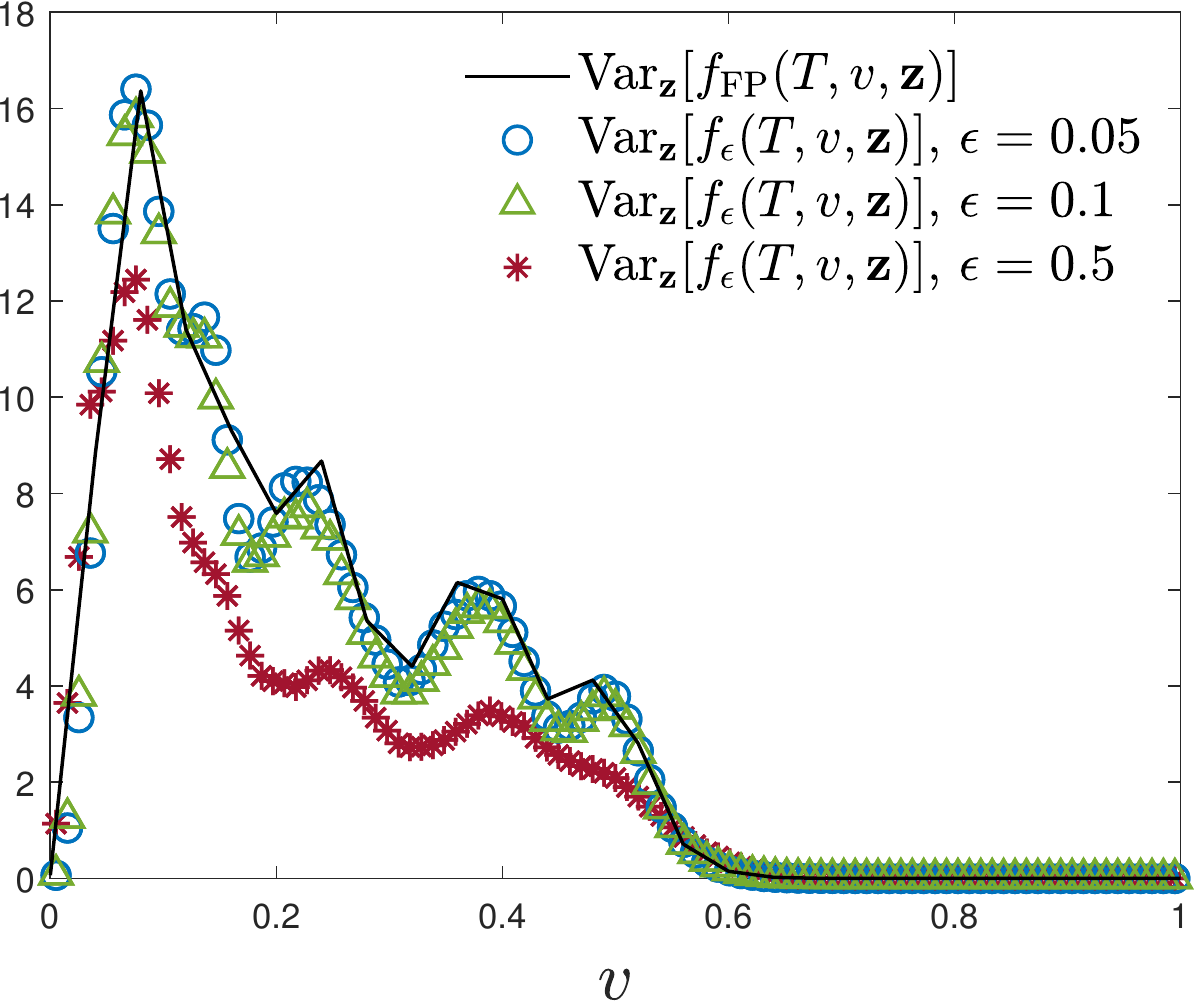}
	\includegraphics[width = 0.31\linewidth]{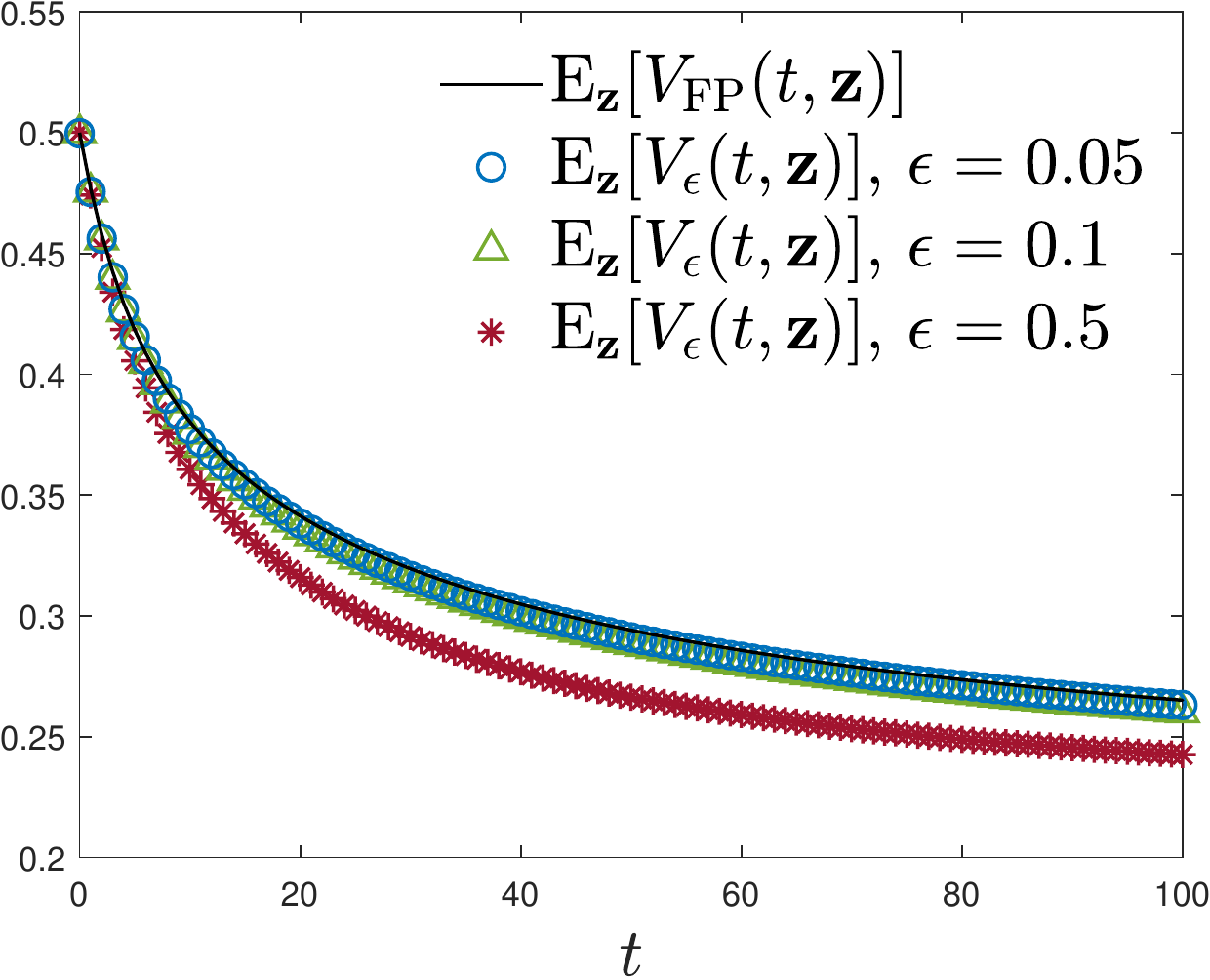}
	\caption{\textbf{Test 3}. Comparison of the numerical solution of the Fokker-Planck model \eqref{FP_nocontrol} with $\mu(z_1)= 1 + 2 z_1 $ and $\alpha(z_2) = 2z_2$, $z_1,z_2 \sim \mathcal U([0,1])$, with the DSMC-sG reconstruction of traffic distributions (first and second column) and of mean velocity (third column) for various $\epsilon = 0.05, 0.1,0.5$. We considered $\rho = 0.4$ (top row) and $\rho = 0.6$ (bottom row). We set for the DSMC-sG method $N = 10^5$, $M_1 = M_2 = 5$, $\Delta t = \epsilon$, and the deterministic solver for Fokker-Planck is such that $\Delta v = 0.02$ and $\Delta t = \Delta v/2$. The time frame is $t \in [0,T]$, $T = 300$. }
\label{fig:2D_traffic}
\end{figure}

%\begin{figure}
%	\centering
%	%\includegraphics[scale = 0.4]{Immagini/error_sG_z1_traffic_no_reg}
%	\includegraphics[scale = 0.4]{Immagini/error_sG_z2_traffic_no_reg}
%	\includegraphics[scale = 0.4]{Immagini/error_sG_z1_z2_traffic_no_reg}
%	\caption{\textcolor{red}{da sinistra destra: errore L2 sulla valutazione dell'energia nel caso con incertezza nell'interazione, con incertezza nel nucleo, e con doppia incertezza ($\rho=0.4$) }}
%	\label{fig:2D_traffic}
%\end{figure}

\begin{figure}
	\centering
	\includegraphics[width = 0.45\linewidth]{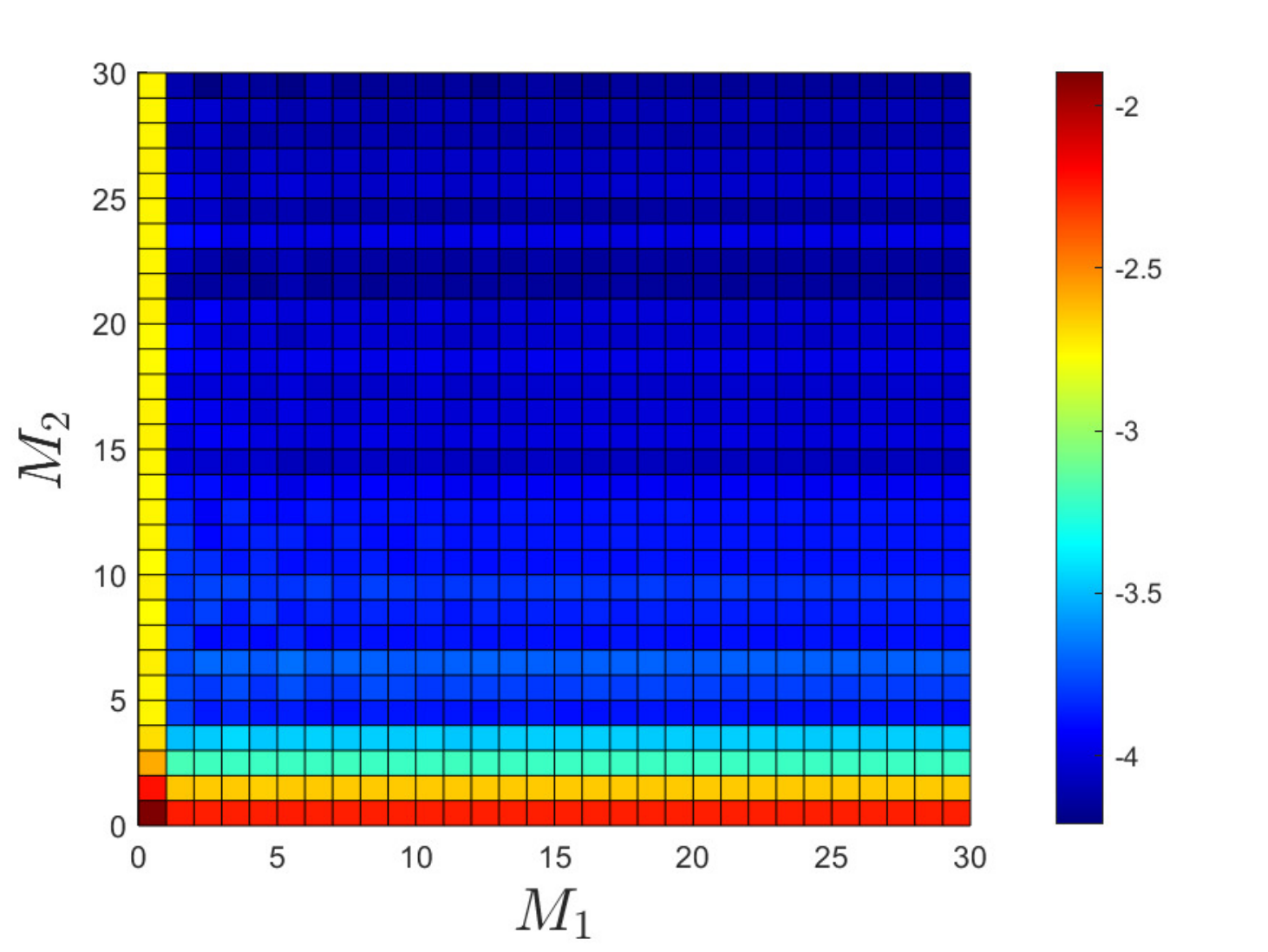}
	\includegraphics[width = 0.45\linewidth]{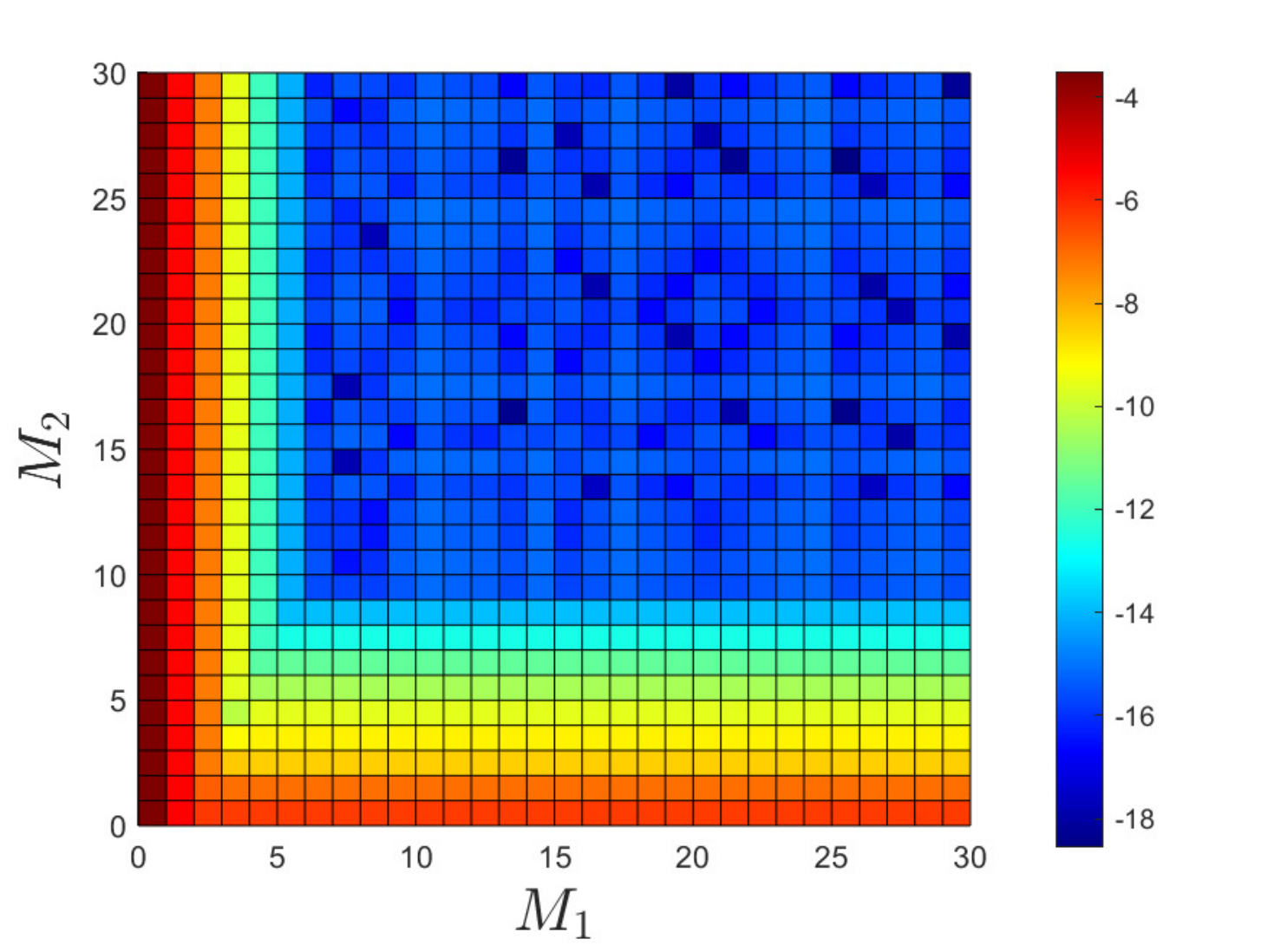}
	\caption{\textbf{Test 3}. Convergence of the $L^2$ error of the DSMC-sG scheme where binary dynamics are given by \eqref{eq:interactionDSMC} (left) or by \eqref{eq:bin_int_sigmoid} where $\beta = 0.01$, in the case of model for traffic flow with 2D uncorrelated  uncertainty in interactions and kernel. We consider $N = 10^5$, $\Delta t = \epsilon = 0.1$. We fix $\rho = 0.4$, $\mu(z_1) = 1 + 2 z_1$ and $\alpha(z_2)=2 z_2$ with $z_1,z_2 \sim \mathcal U([0,1])$. Reference solution computed with $M_1 = M_2 = 50$.}
	\label{fig:2D_conv}
\end{figure}

Finally, coupling \eqref{eq:bin_int_sigmoid} with the process \eqref{eq:rescaling}, we recover a qualitatively consistent approximation of the evolution of relevant quantities of interest in the case of traffic flow model, see Figure \ref{fig:reg_traffic}. 

\begin{figure}
\centering
\includegraphics[width = 0.31\linewidth]{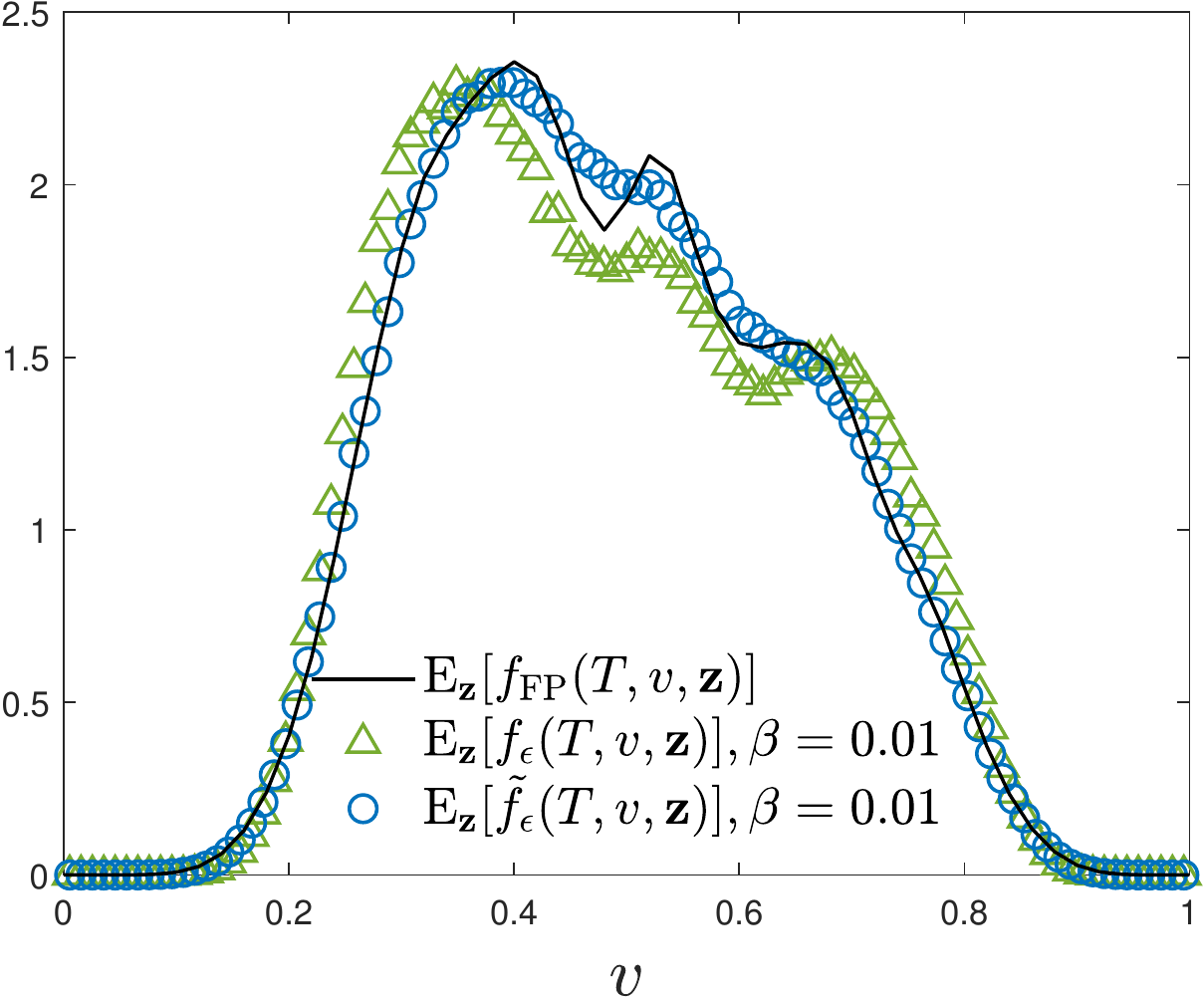}
\includegraphics[width = 0.31\linewidth]{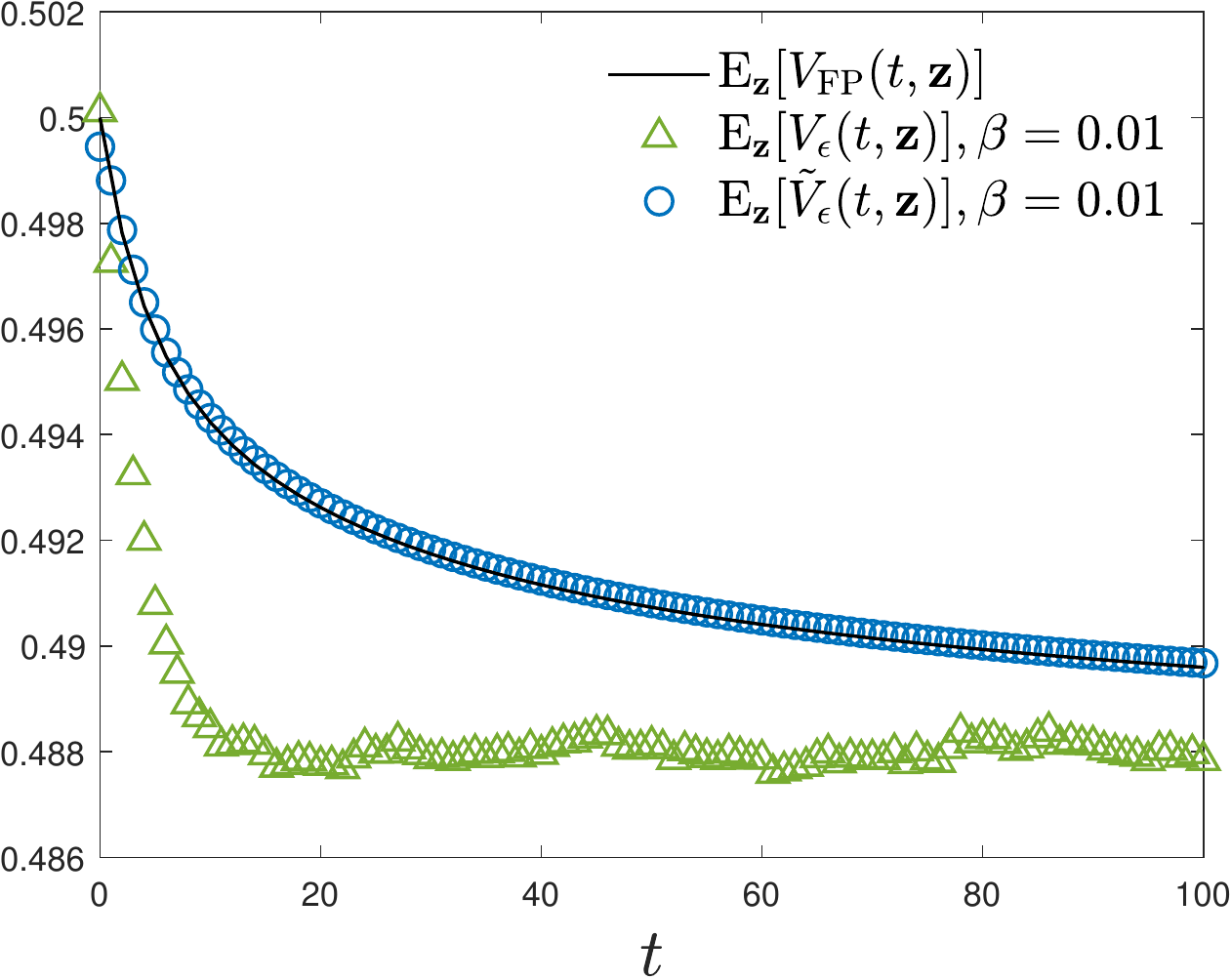}
\includegraphics[width = 0.31\linewidth]{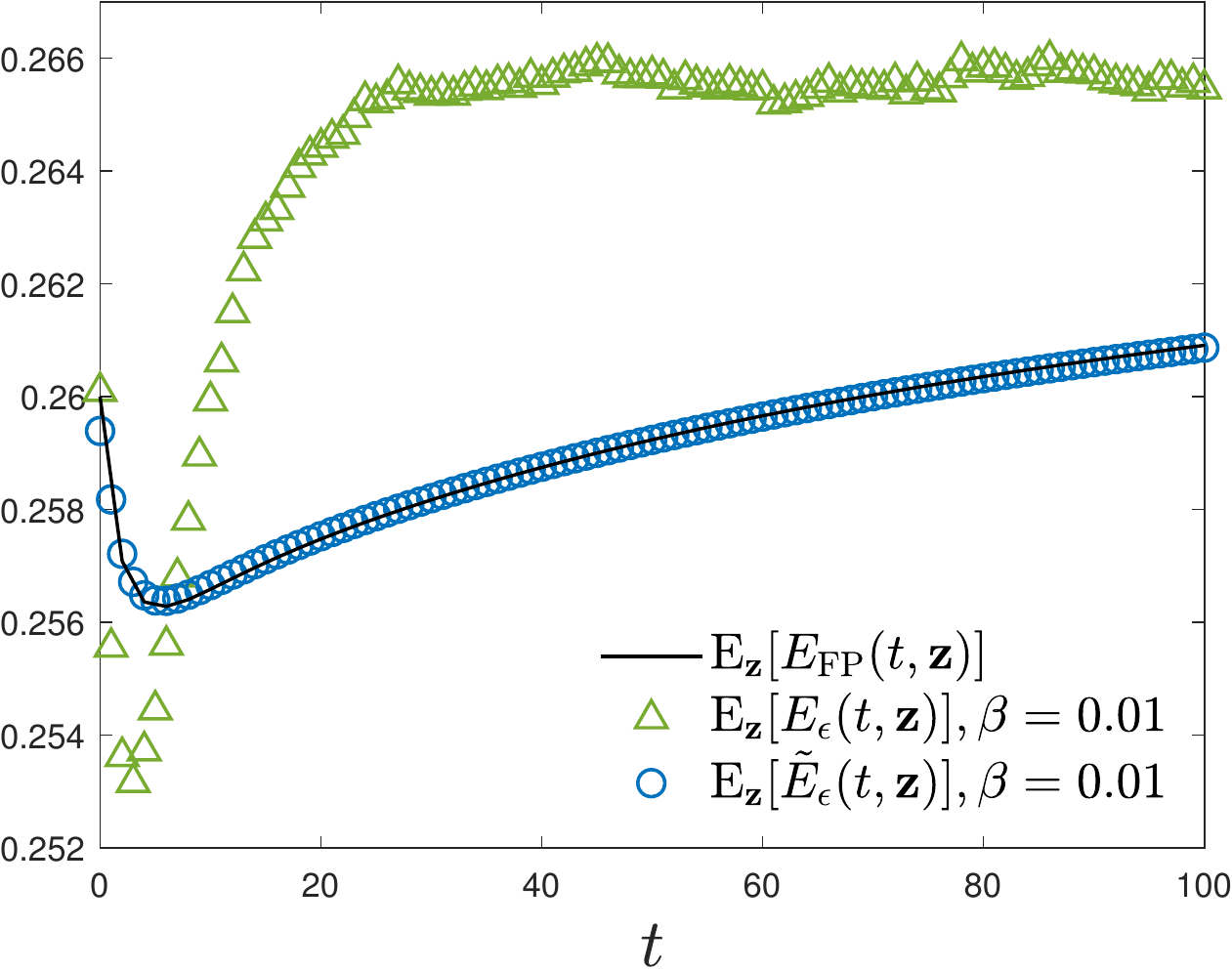} 
\includegraphics[width = 0.31\linewidth]{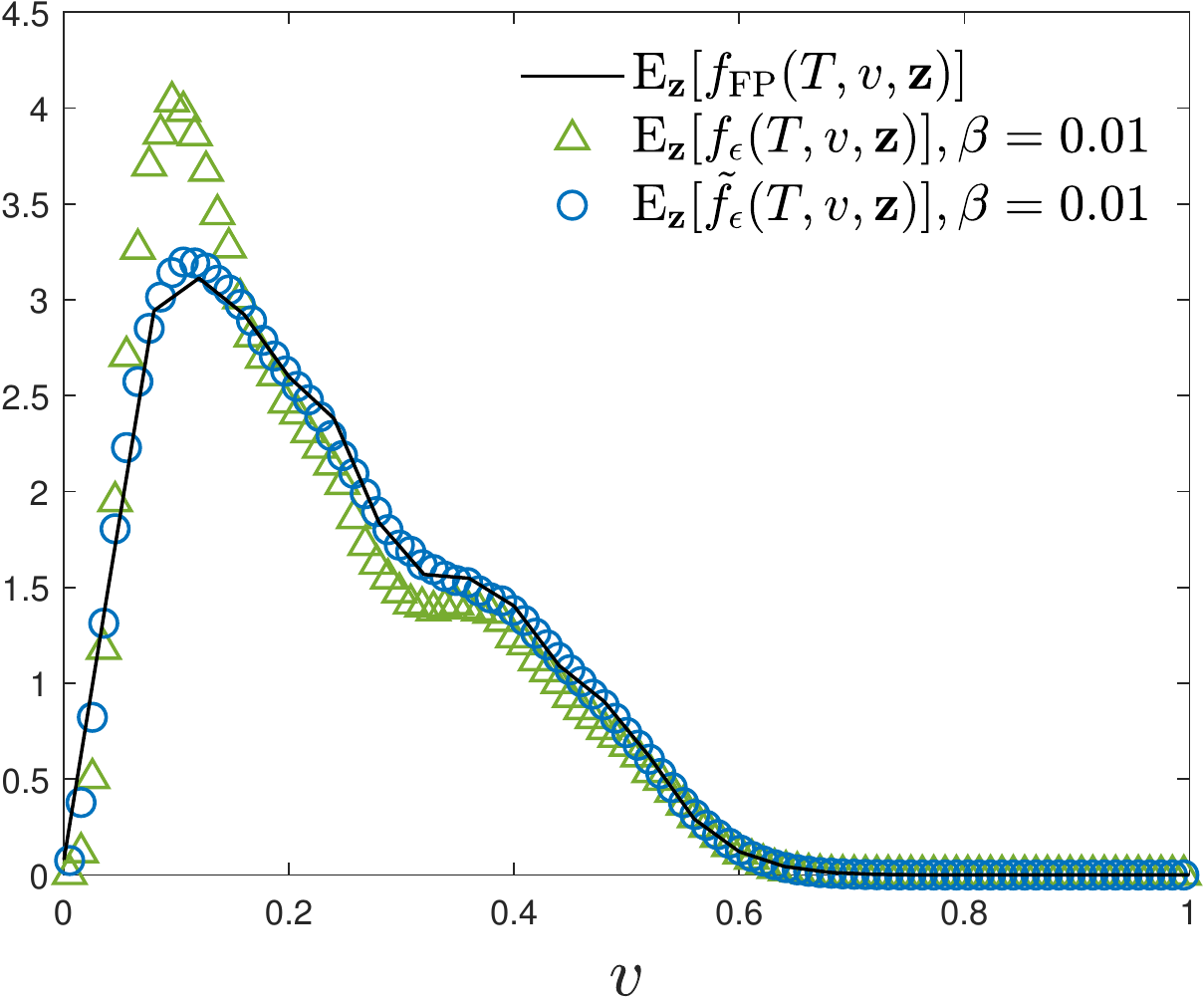}
\includegraphics[width = 0.31\linewidth]{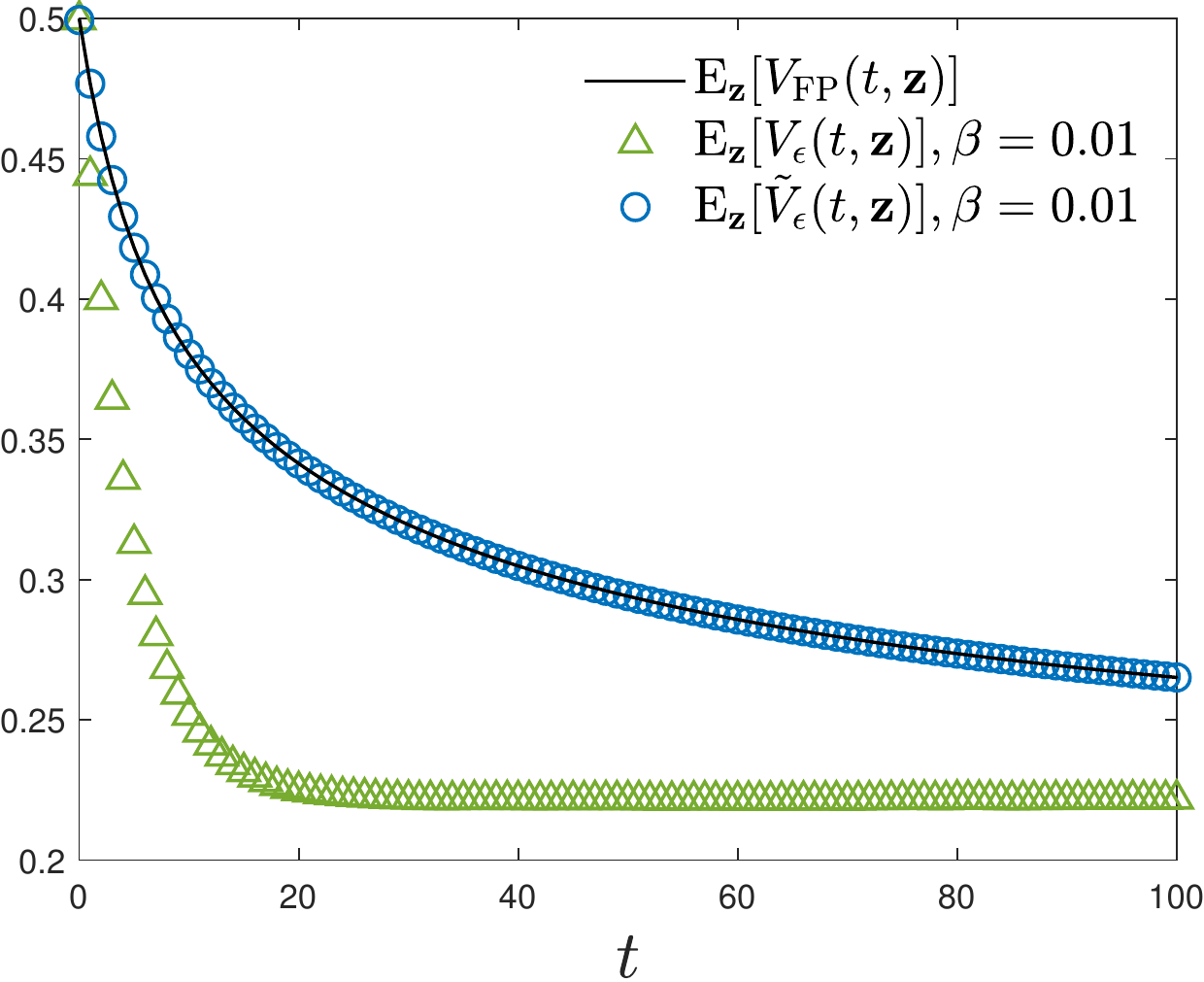}
\includegraphics[width = 0.31\linewidth]{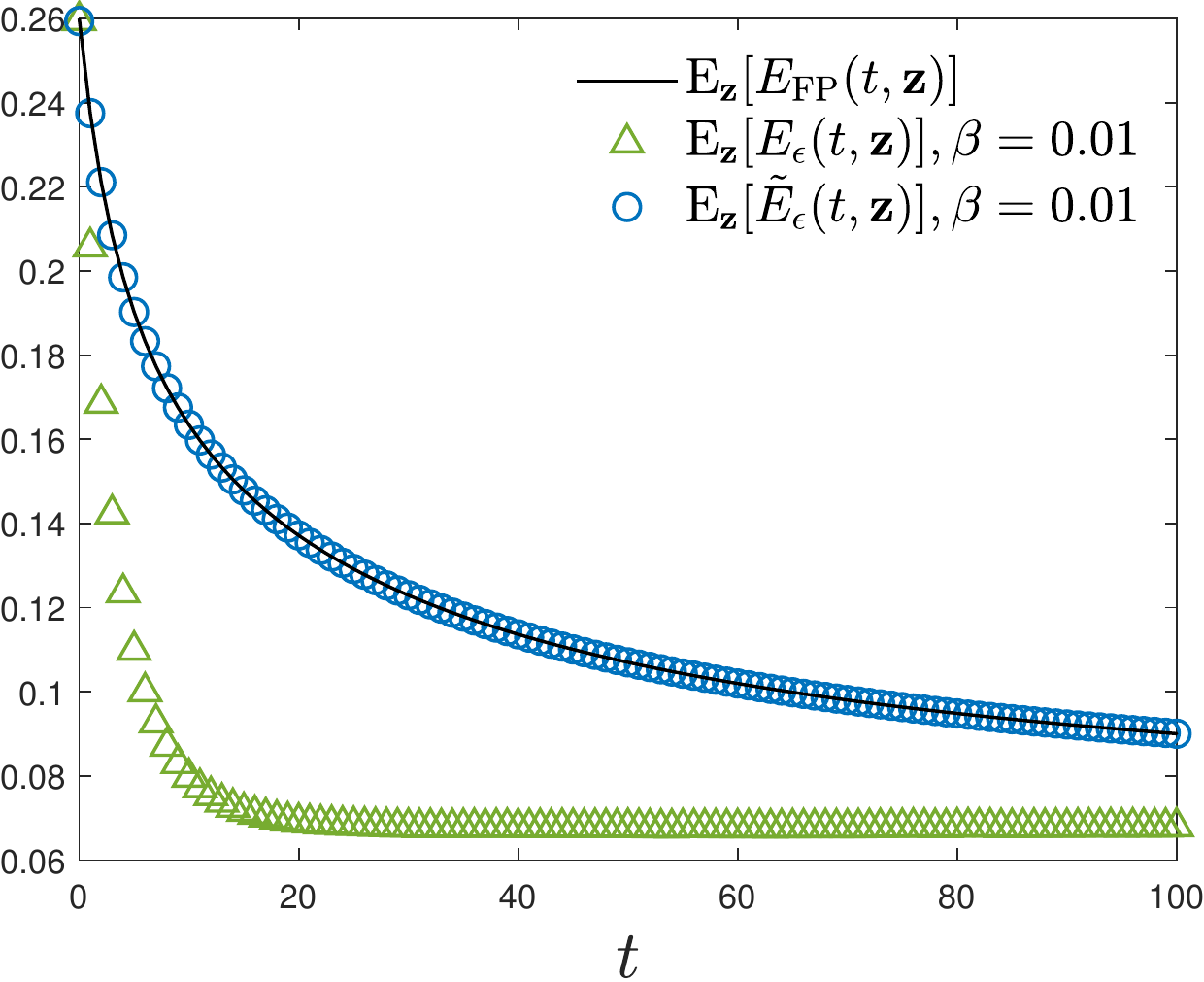}
\caption{\textbf{Test 3}. First column: comparison of numerical $f_{\textrm{FP}}(T,v,\z)$ with the DSMC-sG approximation of $f_\epsilon(T,v,\z)$ (regularization without rescaling) and of $\tilde{f}_\epsilon(T,v,\z)$ (regularization with rescaling) in terms of the expectation in $\z$. Central and right columns:  comparison of $V_{\textrm{FP}}$, $E_{\textrm{FP}}$ with the DSMC-sG approximation without rescaling $V_\epsilon(t,\z)$, $E_\epsilon(t,\z)$ and with rescaling $\tilde{V}_\epsilon(t,\z)$, $\tilde{E}_\epsilon(t,\z)$. Top row: $\rho = 0.4$, bottom row: $\rho = 0.6$. We consider $N = 10^5$, $M_1 = M_2 = 5$ and $\Delta t = \epsilon = 0.1$.   }
\label{fig:reg_traffic}
\end{figure}

\section*{Conclusion}

In this work, we studied an extension of a recently introduced DSMC-sG hybrid approach \cite{Carrillo2019,pareschi2020JCP} for uncertainty quantification of kinetic equations to non-Maxwellian Boltzmann-type models for multi-agent systems. The proposed method combines a DSMC solver in the physical space with a stochastic Galerkin method in the random space and is based on a generalized Polynomial Chaos expansion of statistical samples of a DSMC solver. The DSMC-sG solution of non-Maxwellian models with uncertainties requires a suitable reformulation of classical DSMC solvers. The class of kinetic models of interest can be formally approximated by surrogate Fokker-Planck-type models in the quasi-invariant regime. For these models, the regularity in the random space has been investigated. In particular, exploiting this observation we guarantee spectral accuracy of the method in the random space. Several examples based on existing models of multi-agent systems have been investigated numerically. The extension of the DSMC-sG methods to non-homogeneous equations of collective phenomena is currently under investigation.

\section*{Acknowledgements}
This work has been written within the activities of the GNFM group of INdAM (National Institute of High Mathematics). A.T. and M.Z. acknowledge partial support of MUR-PRIN2020 Project (No. 2020JLWP23) "Integrated mathematical approaches to socio-epidemiological dynamics". The research of M.Z. was partially supported by MUR, Dipartimenti di Eccellenza Program (2018–2022), and Department of Mathematics “F. Casorati”, University of Pavia.  The research of A.T. was partially supported by MUR, Dipartimenti di Eccellenza Program (2018–2022), and Department of Mathematical Sciences “G. L. Lagrange”, Politecnico di Torino. 

\section*{Competing interests}
The authors declare no competing interests. 

\section*{Data availability statement}
The dataset generated during the current study is available from the corresponding author on reasonable request.

\appendix

\section{Non-Maxwellian traffic model}
\label{append:A}

Let us consider first the case $\alpha(\z) = 1$. From \eqref{Vmean} we have
\begin{equation}
	\label{V_NM}
	\begin{split}
		\left|\frac{1}{\epsilon} \frac{dV(t,\z)}{dt} \right| & \le \underbrace{  \left|  \int_{0}^{1} \int_{0}^{v}(v-w)I(v,w,\z)f(t,v,\z)f(t,w,\z)dwdv \right|}_{A} \\
		&\quad+  \underbrace{\dfrac{1}{2}\left| \int_{0}^{1} \int_{0}^{1}(w-v)I(v,w,\z)f(t,v,\z)f(t,w,\z)dwdv \right|. }_{B}
	\end{split}
\end{equation}
In particular, a direct integration of the second term gives
\begin{equation*}
	B = \dfrac{1}{2}(E(t,\z)-V^2(t,\z))(P(\rho,\z)-P^2(\rho,\z)+1)>0
\end{equation*}
being $P \in [0,1]$ for all $\z$ and where $E(t,\z) = \int_0^1 v^2 f(t,v,\z)dv$ is the energy.  On the other hand, thanks to triangular inequality, we have
\begin{equation}
	\label{eq2}
	\begin{split}
		&A
		\leq \underbrace{ \int_{0}^{1} \int_{0}^{v}(v-w) |P(\rho,\z)(1-v)| f(t,v,\z)f(t,w,\z)dwdv}_{\textrm{A}_1} \\
		&\quad+\underbrace{\int_{0}^{1} \int_{0}^{v}(v-w) |(1-P(\rho,\z))(P(\rho,\z)w-v)|f(t,v,\z)f(t,w,\z)dwdv.}_{\textrm{A}_2}
	\end{split}
\end{equation}  
Since $P\ge 0$ and $v\in [0,1]$ we have 
\begin{equation*}
	\begin{split}
		 \textrm{A}_1 
		&=\int_{0}^{1} P(\rho,\z)(1-v)\left[\int_{0}^{v}(v-w)f(t,w,\z)dw\right]f(t,v,\z)dv\\
		&\leq\int_{0}^{1} P(\rho,\z)(1-v)vf(t,v,\z)dv=P(\rho,\z)(V(t,\z)-E(t,\z)).
	\end{split}
\end{equation*}
Similarly, for the second term of \eqref{eq2}  we have
\begin{equation*}
	%\label{eq5}
	\begin{split}
		\textrm{A}_2 &= \int_{0}^{1} \int_{0}^{v}(v-w) (1-P(\rho,\z)) v f(t,v,\z)f(t,w,\z)dwdv  \\
		&\leq\int_{0}^{1} (1-P(\rho,\z))v^{2}f(t,v,\z)dv=(1-P(\rho,\z))E(t,\z),
	\end{split}
\end{equation*}
From the obtained inequalities we conclude that 
\begin{equation}
\begin{split}
	\left| \frac{dV(t,\z)}{dt} \right| \leq& \frac{\epsilon}{2} \Big[ 2P(\rho,\z)V(t,\z)-V^2(t,\z)(P(\rho,\z)-P^2(\rho,\z)+1)  \\
	&+E(t,\z)(3-3P(\rho,\z)-P^2(\rho,\z)) \Big].
\end{split}
\end{equation} 
Since $v \in [0,1]$ we have $E(t,\z)\leq V(t,\z)$ and, introducing the notation $\beta(\z) = 3-3P(\rho,\z)-P^2(\rho,\z)$, we get
\begin{equation*}
	\begin{cases} 
		\left| \dfrac{dV(t,\z)}{dt} \right| \leq \frac{\epsilon}{2} \left[V(t,\z)(3-P(\rho,\z)-P^2(\rho,\z))-V^2(t,\z)(P(\rho,\z)-P^2(\rho,\z)+1)\right] & \beta > 0 \\ 
		\left| \dfrac{dV(t,\z)}{dt} \right| \leq \frac{\epsilon}{2} \left[2V(t,\z)P(\rho,\z)-V^2(t,\z)(P(\rho,\z)-P^2(\rho,\z)+1)\right] &  \beta \leq 0, 
	\end{cases}
\end{equation*} 
that are both Bernoulli-type ODEs. 

Hence, we get $V^-(t,\z) \le V(t,\z)\le V^+(t,\z)$ where
\begin{equation}
	\label{bernoulli2}
	\begin{split}
		& V^{+}(t,\z)=\left[\frac{C_2(\z)}{C_1(\z)}+e^{-C_1(\z)t}\left(\frac{1}{V_{0}}-\frac{C_2(\z)}{C_1(\z)}\right) \right]^{-1} \\ 
		& V^{-}(t,\z)=\left[\frac{C_2(\z)}{C_1(\z)}+e^{C_1(\z)t}\left(\frac{1}{V_{0}}-\frac{C_2(\z)}{C_1(\z)}\right) \right]^{-1} 
	\end{split}
\end{equation} 
where 
\begin{equation}
	\label{bernoulli}
	\begin{split}
	\begin{cases} 
		C_1(\z)=\frac{\epsilon}{2}(3-P(\rho,\z)-P^2(\rho,\z)) & \beta>0\\
		C_1(\z)= \epsilon P(\rho,\z) & \beta \le 0
	\end{cases}
	\\
	\begin{cases} 
		C_2(\z)=\frac{\epsilon}{2}(1+P(\rho,\z)-P^2(\rho,\z)) & \beta> 0  \\ 
		C_2(\z)=\frac{\epsilon}{2}(1+P(\rho,\z)-P^2(\rho,\z)) & \beta\leq 0.
	\end{cases}
	\end{split}
\end{equation} 

Hence, we consider the case $\alpha(\z) =2 $. We define the following constants $C_1(\z)=\epsilon P(\rho,\z)\geq0$ and $C_2(\z)=\frac{\epsilon}{2}(P^2(\rho,\z)+1-P(\rho,\z))>0$ and from \eqref{Vmean} we get
\begin{equation}
	\frac{dV(t,\z)}{dt}=C_1(\z)\left(E(t,\z)-V^2(t,\z)\right)+C_2(\z)\left(E(t,\z)V(t,\z) - M_3(t,\z)\right),
\end{equation}
where $M_3(t,\z) = \int_0^1 v^3 f(t,v,\z)dv$. Hence, from the triangular inequality we find 
\begin{equation}
	\left|\frac{dV(t,\z)}{dt}\right|\leq C_1(\z)\left(E(t,\z)-V^2(t,\z)\right)+C_2(\z)\left|E(t,\z)V(t,\z) - M_3(t,\z)\right|.
\end{equation}
Arguing as before, we get
\begin{equation}
\begin{split}
	C_1(\z)\left(E(t,\z)-V^2(t,\z)\right) &\leq C_1(\z)\left(V(t,\z)-V^2(t,\z)\right), \\
	C_2(\z)\left|E(t,\z)V(t,\z) - M_3(t,\z)\right| &\leq C_2(\z) V(t,\z).
\end{split}
\end{equation}
Therefore we get $V^-(t,\z)\le V(t,\z) \le V^+(t,\z)$ where
\begin{equation}
	\begin{split}
		 V^{+}(t,\z)&=\left[\frac{C_1(\z)}{C_1(\z)+C_2(\z)}+e^{-(C_1(\z)+C_2(\z))t}\left(\frac{1}{V_{0}}-\frac{C_1(\z)}{C_1(\z)+C_2(\z)}\right) \right]^{-1} \\ 
		 V^{-}(t,\z)&=\left[\frac{C_1(\z)}{C_1(\z)+C_2(\z)}+e^{(C_1(\z)+C_2(\z))t}\left(\frac{1}{V_{0}}-\frac{C_1(\z)}{C_1(\z)+C_2(\z)}\right) \right]^{-1},
	\end{split}
\end{equation} 
with $ V(0,\z) = V_0 \in [0,1]$.

\bibliographystyle{plain}
\bibliography{UQ_kernel_traffic.bib}

\end{document}